\documentclass[12pt,a4paper,reqno,oneside]{amsart}

\usepackage{amsmath}
\usepackage{amsthm}
\usepackage{amssymb}
\usepackage{bbm}
\usepackage[sort,nocompress]{cite} 
\usepackage{enumerate}
\usepackage{xcolor}
\usepackage{geometry}
\usepackage{stmaryrd}
\usepackage[colorlinks,citecolor=red]{hyperref}
\usepackage[notref,notcite,final]{showkeys}

\usepackage[linesnumbered,ruled,vlined]{algorithm2e}
\SetKwInput{KwInput}{Input}
\SetKwInput{KwOutput}{Output}

\usepackage{graphicx}
\usepackage[capitalise]{cleveref}
\usepackage{array}
\usepackage{etoolbox}
\AtBeginEnvironment{tabular}{\small}
\newcolumntype{L}[1]{>{\raggedright\let\newline\\\arraybackslash\hspace{0pt}}m{#1}}
\newcolumntype{R}[1]{>{\raggedleft\let\newline\\\arraybackslash\hspace{0pt}}m{#1}}

\newtheorem{lemma}{Lemma}[section]
\newtheorem{remark}[lemma]{Remark}

\newtheorem{theorem}[lemma]{Theorem}

\newtheorem{corollary}[lemma]{Corollary}

\newtheorem{setting}[lemma]{Setting}
\theoremstyle{definition}

\renewcommand{\gets}{\curvearrowleft}
\providecommand{\N}{{\ensuremath{\mathbbm{N}}}}
\providecommand{\Z}{{\ensuremath{\mathbbm{Z}}}}
\providecommand{\R}{{\ensuremath{\mathbbm{R}}}}

\renewcommand{\P}{{\ensuremath{\mathbbm{P}}}}
\providecommand{\E}{{\ensuremath{\mathbbm{E}}}}
\providecommand{\V}{{\ensuremath{\mathbbm{V}}}}
\providecommand{\1}{{\ensuremath{\mathbbm{1}}}}
\providecommand{\F}{{\ensuremath{\mathbbm{F}}}}

\newcommand{\threenorm}[1]{{\left\vert\kern-0.25ex\left\vert\kern-0.25ex\left\vert #1 
    \right\vert\kern-0.25ex\right\vert\kern-0.25ex\right\vert}}

\newcommand{\unif}{\mathfrak{u}}
\newcommand{\rvV}{\mathfrak{v}}
\providecommand{\var}{{\ensuremath{\mathbbm{V}}}}
\newcommand{\rdown}[1] {\lfloor #1\rfloor }

\author[A. Neufeld]{Ariel Neufeld}
\address{Division of Mathematical Sciences, Nanyang Technological University, Singapore}
\email{ariel.neufeld@ntu.edu.sg}

\thanks{Financial support by the MOE AcRF Tier 2 Grant \emph{MOE-T2EP20225-0011} is gratefully acknowledged.}

\author[T.A. Nguyen]{Tuan Anh Nguyen}
\address{Faculty of Mathematics, Bielefeld University, Germany}
\email{tnguyen@math.uni-bielefeld.de}

\author[P. Schmocker]{Philipp Schmocker}
\address{Department of Mathematics, ETH Zurich, Switzerland}
\email{philipp.schmocker@math.ethz.ch}

\title[MLP for McKean--Vlasov SDEs with jumps]{Multilevel Picard approximations for McKean--Vlasov stochastic differential equations with jumps}
\keywords{McKean--Vlasov SDE, distribution-dependent SDE, mean-field SDE, high-dimensional SDE, multilevel Picard approximation, jumps, Poisson random measure, curse of dimensionality}
\subjclass[2020]{60H10, 60H35, 65C05, 65C30}
\allowdisplaybreaks
\begin{document}
	
\begin{abstract}
We introduce multilevel Picard (MLP) approximations of McKean--Vlasov stochastic differential equations (SDEs) with jumps of finite or infinite activity. Under Lipschitz and integrability assumptions on the coefficients, we show that the MLP algorithm does not suffer from the curse of dimensionality when approximating the solution of the McKean--Vlasov SDE. The latter means that its computational cost grows at most polynomially in both the state-space dimension of the SDE and the reciprocal of the prescribed error tolerance. In two numerical experiments, we demonstrate the practical applicability of the MLP algorithm for two different McKean--Vlasov SDEs in dimensions up to 200.
\end{abstract}

\maketitle
\setcounter{tocdepth}{3} 

\tableofcontents

\section{Introduction}
\tableofcontents

McKean--Vlasov stochastic differential equations (SDEs) were introduced by H.P.~McKean in \cite{McKean1966} in connection with the Vlasov equation and provide a natural framework for analyzing the limiting behavior of stochastic systems of weakly interacting particles. The coefficients of these SDEs depend not only on the state of the process but also on its distribution, which is why they are also referred to as distribution-dependent SDEs or mean-field SDEs. Applications of McKean--Vlasov SDEs arise in various fields, including statistical physics (see, e.g., \cite{Kac1956,Sznitman1991,DV1995,Meleard1996,BCP1997}), mathematical biology (see, e.g., \cite{Graham1992,ME1999,BCM2007,BFFT2012,MSSZ2020}), social sciences (see, e.g., \cite{CFT2012,CJLW2017,GGSP2022}), mean-field games (see, e.g., \cite{CL2018,Carmona2018,CDLL2019}), stochastic control (see, e.g., \cite{BLM2017,Agram2023}), mathematical finance (see, e.g., \cite{GSS2020}), and the analysis of neural networks (see, e.g., \cite{MMN2018,SS2020,HRSS2021,RVE2022}).

In this paper, we are interested in the numerical approximation of McKean--Vlasov SDEs \emph{with jumps}, which are for $d \in \N$ and terminal time $T > 0$ of the form
\begin{align}
	\label{eq:mckv}
	X^{d}_{\infty}(t)&= \xi_d+\int_{0}^{t}
	\E\! \left[\mu_d (x, X^d_{\infty}(s-) )\right]\!|_{x=X^d_{\infty}(s-)}\,ds+\int_{0}^{t}
	\E\! \left[\sigma_d (x, X^d_{\infty}(s-) )\right]\!|_{x=X^d_{\infty}(s-)}\,dW^d(s)\nonumber
	\\
	&\quad +\int_{0}^{t}\int_{\R^d}
	\E\! \left[\eta_d (x, X^d_{\infty}(s-),z )\right]\!|_{x=X^d_{\infty}(s-)}\,\tilde{N}^d(dz,ds), \qquad t \in [0,T],
\end{align}
where $\xi_d \in \R^d$ is the initial value, where $\mu_d: \R^d \times \R^d \rightarrow \R^d$, $\sigma_d: \R^d \times \R^d \rightarrow \R^{d \times d}$, and $\eta_d: \R^d \times \R^d \times \R^d \rightarrow \R^d$ are sufficiently regular functions, where $W^d$ is a $d$-dimensional Brownian motion, and where $\tilde{N}^d$ is a compensated Poisson random measure. Under standard Lipschitz assumptions on the coefficients, together with suitable integrability conditions on $\eta_d$ with respect to the L\'evy measure associated to $\tilde{N}^d$, existence and uniqueness of strong solutions to \eqref{eq:mckv} have been established in \cite{Sznitman1991,Graham1992,Andreis2018,Papapantoleon2024,BaoWang2026}. Replacing the expectation over the coefficients in \eqref{eq:mckv} by the empirical measure of $N \in \N$ interacting particles, we obtain a system of SDEs given by
\begin{align}
	\mathcal{X}^{d,n}_N(t)&= \xi_d+\frac{1}{N}\!\sum_{m=1}^N\int_{0}^{t}\!\mu_d(\mathcal{X}^{d,n}_N(s-),\mathcal{X}^{d,m}_N(s-))\,ds\nonumber+\frac{1}{N}\!\sum_{m=1}^N\int_{0}^{t}\!
	\sigma_d(\mathcal{X}^{d,n}_N(s-),\mathcal{X}^{d,m}_N(s-))\,dW^{d,n}(s)\nonumber
	\\
	&\quad +\frac{1}{N}\sum_{m=1}^N\int_{0}^{t}\int_{\R^d}
	\eta_d(\mathcal{X}^{d,n}_N(s-),\mathcal{X}^{d,m}_N(s-),z)\,\tilde{N}^{d,n}(dz,ds), \quad\,\,\,\, t \in [0,T], \qquad n = 1,...,N,
\end{align}
where $(W^{d,n})_{n=1,...,N}$ and $(\tilde{N}^{d,n})_{n=1,...,N}$ are independent copies of the Brownian motion and the compensated Poisson random measure in \eqref{eq:mckv}, respectively. In this case, the phenomenon of \emph{propagation of chaos} ensures that each particle $\mathcal{X}^{d,n}$ converges in law (and under suitable assumptions also in stronger norms) to the solution $X^{d}_{\infty}$ of \eqref{eq:mckv} (see \cite{Kac1956,McKean1967,Sznitman1991,Meleard1996,Mischler2015,Andreis2018,Lacker2018,Cavallazzi2022,Erny2022,Erny2022locLip,Cavallazzi2025,Hernandez2025}).

\begin{remark}\label{r01}
	The McKean--Vlasov SDEs \eqref{eq:mckv} can be equivalently written using the laws of $(	X^{d}_{\infty}(t))_{t \in [0,T]}$ instead of the expectation. Indeed, let $\mathcal{P}_2(\R^d)$ denote the set of probability measures $\lambda$ on $\R^d$ satisfying $\int_{\R^d} \Vert x \Vert^2 \lambda(dx) < \infty$ and define the functions $\R^d \times \mathcal{P}_2(\R^d) \ni (x,\lambda) \mapsto \widetilde{\mu}_d(x,\lambda) := \int_{\R^d} \mu_d(x,y) \, \lambda(dy) \in \R^d$, $\R^d \times \mathcal{P}_2(\R^d) \ni (x,\lambda) \mapsto \widetilde{\sigma}_d(x,\lambda) := \int_{\R^d} \sigma_d(x,y) \, \lambda(dy) \in \R^{d\times d}$, and $\R^d \times \mathcal{P}_2(\R^d) \times \R^d \ni (x,\lambda,z) \mapsto \widetilde{\eta}_d(x,\lambda,z) := \int_{\R^d} \eta_d(x,y,z) \, \lambda(dy) \in \R^d$. Then, it holds $\mathbb{P}$-a.s.~that
	\begin{align}
		X^{d}_{\infty}(t)&= \xi_d+\int_{0}^{t} \widetilde{\mu}_d\left(X^{d}_{\infty}(s-),\lambda_{X^{d}_{\infty}(s-)}\right) \,ds+\int_{0}^{t}
		\widetilde{\sigma}_d\left(X^{d}_{\infty}(s-),\lambda_{X^{d}_{\infty}(s-)}\right)\,dW^d(s)\nonumber
		\\
		&\quad +\int_{0}^{t}\int_{\R^d}\widetilde{\eta}_d\left(X^{d}_{\infty}(s-),\lambda_{X^{d}_{\infty}(s-)},z\right)\,\tilde{N}^d(dz,ds), \qquad t \in [0,T],
	\end{align}
	where $\lambda_{X^{d}_{\infty}(s-)}$ denotes the law of the $\R^d$-valued random variable $X^{d}_{\infty}(s-)$, which is defined as $\lambda_{X^{d}_{\infty}(s-)}(A) := \mathbb{P}[X^{d}_{\infty}(s-) \in A]$ for all Borel subsets $A \subseteq \R^d$.
\end{remark}

In the literature, only a few works address the numerical approximation of \emph{high-dimensional} McKean--Vlasov SDEs. In the \emph{absence of jumps}, \cite{HKN2022} showed that multilevel Picard (MLP) approximations overcome the curse of dimensionality for McKean--Vlasov SDEs with constant diffusion coefficients, which was extended in \cite{Neufeld2025} to nonconstant diffusion coefficients. More recently, \cite{HHL2024,Germain2022,PW2024} have applied deep learning methods to the numerical approximation of McKean--Vlasov equations. While \cite{HHL2024} established an a priori error estimate under suitable conditions, the numerical experiments in \cite{HHL2024,Germain2022,PW2024} remain restricted to relatively low dimensions, with dimensions up to $50$. On the theoretical side, \cite{NN2025} proved that rectified deep neural networks overcome the curse of dimensionality when approximating McKean--Vlasov SDEs, but without providing an explicit numerical algorithm for computing the solution of \eqref{eq:mckv}. A broader literature on numerical approximation schemes for McKean--Vlasov SDEs without jumps includes, e.g., \cite{BST2019,STT2019,RSZ2020optimalreg,RSZ2020pathreg,BH2021,BRRS2021,dRES2021,ST2021,BCNdRR2022,CdR2022,CNRS2022,LRS2022,RS2022,agarwal2023numerical,BRRS2023,CdR2023,chen2023wellposedness,dRST2023,RSZ2023}.

The extension to McKean--Vlasov SDEs \emph{with jumps} is particularly relevant for models combining mean-field interactions with sudden event-driven dynamics. Applications include spiking neuron models in biology \cite{Delarue2015,Andreis2018,Chevallier2017}, kinetic models for collision processes in gases, such as the BGK equation \cite{Bhatnagar1954,Butta2021}, household epidemic models \cite{Forien2022}, systemic-risk and dynamic banking models \cite{Benazzoli2020,Ichiba2019}, and mean-field games and control problems with jump diffusions \cite{Agram2023}. Numerical approximations of McKean--Vlasov SDEs with jumps have been developed based on Fourier-based Picard iterations \cite{Agarwal2021}, Euler--Maruyama and tamed or adaptive Euler-type schemes \cite{Frikha2021,Neelima2020,Qin2024,Tran2025,Zhu2026}, Milstein-type schemes \cite{Biswas2024}, and Ito--Taylor schemes \cite{Sun2021}. Deep learning methods have also been proposed for control problems involving conditional McKean--Vlasov jump diffusions \cite{Agram2025}.

In this paper, we develop an MLP algorithm for high-dimensional McKean--Vlasov SDEs with finite or infinite jump-activity, allowing the driving L\'evy measures to be finite or infinite. The construction builds on the scheme of \cite{Neufeld2025}, but incorporates additional Monte Carlo approximations for the jump compensator and, in the case of infinite activity, a truncation of small jumps. The corresponding analysis also requires new variance estimates for the resulting jump terms. We prove that the resulting MLP approximation overcomes the curse of dimensionality in the $L^2$-sense, i.e., the computational cost grows at most polynomially in the state-space dimension $d$ and the reciprocal $\varepsilon^{-1}$ of a prescribed error tolerance $\varepsilon>0$. Moreover, we demonstrate its practical applicability in two numerical examples of McKean--Vlasov SDEs in dimensions up to $200$.

The remainder of this article is structured as follows. In Section~\ref{sec:setting}, we outline the setting and main results stating that the MLP algorithm overcomes the curse of dimensionality when approximating the McKean--Vlasov SDE~\eqref{eq:mckv}. In Section~\ref{sec:numerics}, we illustrate in two numerical examples how the MLP algorithm can be applied to approximate the solution of \eqref{eq:mckv}. Finally, Section~\ref{sec:proofs} contains the proofs of the main results.

\subsection{Notations}
For every $d\in \N$, $x=(x_j)_{j\in [1,d]\cap\Z}$ let $\lVert x\rVert$ be the standard norm of $x$, i.e., $\lVert x\rVert=\sqrt{\sum_{i=1}^d \lvert x_i\rvert^2 }$.
For every $d\in \N$, $x=(x_{ij})_{i,j\in [1,d]\cap\Z}$ let $\lVert x\rVert$ be the Hilbert-Schmidt norm of $x$, i.e.,
$\lVert x\rVert=\sqrt{\sum_{i,j=1}^d \lvert x_{ij}\rvert^2 }$.
Let $\1_A$ denote the indicator function of the set $A$.
For conditional expectations and conditional variances write $\E_{\mathcal{G}}[\cdot]$ and $\V_\mathcal{G}[\cdot]$ instead of $\E[\cdot |\mathcal{G}]$ and $\V[\cdot|\mathcal{G}]$
to lighten the notation. Here, the conditional variances are defined as follows: for any $d\in \N$, for any probability space $(\Omega,\mathcal{F},\P)$, for any random variable $X\colon \Omega\to \R^d$, and for any sigma-algebra $\mathcal{G}\subseteq \mathcal{F}$ we have
\begin{align}\V_\mathcal{G} [X]= \E_\mathcal{G}\!\left[\lVert X-\E_\mathcal{G}[X]\rVert^2\right] .
\end{align}
We write $\N=\{1,2,\ldots\}$, $\N_0=\N\cup\{0\}$. When applying a result we often use a phrase like ``Lemma 3.8 with $d\gets (d-1)$''
that should be read as ``Lemma 3.8 applied with $d$ (in the notation of Lemma 3.8) replaced
by $(d-1 )$ (in the current notation)''.
\section{Setting and main results}
\label{sec:setting}

In this section, we introduce the setting and state the convergence results of our MLP algorithm.

\begin{setting}[Common setting]\label{c01}
Let $T\in (0,\infty)$, 
$c\in [1,\infty)$, $\Theta=\cup_{n\in \N}(\N_0)^n$.
For every $d\in \N$ let $\xi_d\in \R^d$,
$\mu_d 
\in C(\R^d\times \R^d,\R^d)$, $\sigma_d\in C(\R^d\times \R^d,\R^{d\times d})$, $\eta_d\in C(\R^d\times\R^d\times\R^d,\R^d)$. Assume for all $d\in \N$, $x_1,x_2,y_1,y_2\in \R^d$ that
\begin{align}
\max \left\{
\left\lVert
\mu_d(x_1,x_2)-\mu_d(y_1,y_2)\right\rVert
,
\left\lVert
\sigma_d(x_1,x_2)-\sigma_d(y_1,y_2)\right\rVert
\right\}\leq c \lVert x_1-y_1\rVert+c \lVert x_2-y_2\rVert.
\label{e11}
\end{align}
For every $K\in \N$ let $\rdown{\cdot}_K : [0,T] \rightarrow [0,T]$ satisfy for all $t\in (0,T]$ that 
$\rdown{t}_K=\sup( [0,t)\cap\{0,\frac{T}{K},\frac{2T}{K}\ldots,T\})$ and
$\rdown{0}_K=0$. For convenience we write for every $t\in [0,T]$ that  $\rdown{t}_\infty=t $. For every $m\in \N$, 
$K\in \N\cup\{\infty\}$, and every
c\`adl\`ag function $f\colon [0,T]\to \R^m$ we write $f(\rdown{s-}_K)=\lim _{r\uparrow s}f(\rdown{r}_K)$ and $f(\rdown{0-}_K)=f(0)$.
Let $(\Omega,\mathcal{F}, \P, (\F_t)_{t\in [0,T]})$ be a filtered probability space which satisfies the usual conditions\footnote{Let $T \in [0,\infty)$ and ${\bf \Omega} = (\Omega,\mathcal{F},\P, (\F_t)_{t\in[0,T]})$ be 
	a filtered probability space. We say that ${\bf \Omega}$
	satisfies the usual conditions if and only if 
	$\{ A \subseteq \Omega: \exists N \in \mathcal{F} \text{ with } A \subseteq N \text{ and } \P(N) = 0 \} \subseteq \F_0$ and $\forall \, t\in [0,T) \colon \F_t 
	= \cap_{ s \in (t,T] } \F_s$.}. For every $d\in \N$ let 
$W^{d,\theta}=(W^{d,\theta}(t))_{t\in [0,T]}\colon [0,T]\times\Omega\to \R^d$, $\theta\in \Theta$, be independent standard $(\F_t)_{t\in [0,T]}$-Brownian motions. 
Let $\unif^\theta\colon \Omega\to [0,1]$, $\theta\in \Theta$, be independent and identically distributed random variables satisfying for all $\theta\in \Theta$, $t\in [0,1]$ that \begin{align}\label{u01b}
\P(\unif^{\theta}\leq t)=t.
\end{align}
For every $d\in \N$ let $\nu^d\colon \mathcal{B}(\R^d)\to[0,\infty] $ be a L\'evy measure on $\R^d$ and let $N^{d,\theta}\colon 
\mathcal{B}(\R^d\times [0,T])\to \N_0\cup \{\infty\}
$, $\theta\in \Theta$, be independent $(\F_t)_{t\in [0,T]}$-Poisson random measures on 
$\R^d\times [0,T]$
with non-trivial compensator  measure $\nu^d(dz)\,dt$. For every $d\in \N$, $\theta\in \Theta$ denote by 
\begin{align}
\tilde{N}^{d,\theta}(dz, dt) := N^{d,\theta}(dz, dt) - \nu^d(dz) dt\label{s03b} 
\end{align}
the compensated Poisson random measure of $N^{d,\theta}$.
Assume for all 
$d\in \N$,
$x_1,x_2,y_1,y_2\in \R^d$ that
\begin{align}
\int_{\R^d} \left\lVert\eta_d(x_1,x_2,z)-\eta_d(y_1,y_2,z)\right\rVert^2\nu^d(dz)\leq c \lVert x_1-y_1\rVert^2 + c \lVert x_2-y_2\rVert^2.\label{e11b}
\end{align}
Assume for all $d\in \N$ that
\begin{align} \lVert\xi_d\rVert+ \lVert\mu_d(0,0)\rVert+\lVert\sigma_d(0,0)\rVert+
\left(
\int_{\R^d} \left\lVert\eta_d(0,0,z)\right\rVert^2\nu^d(dz)
\right)^\frac{1}{2}\leq cd^c.\label{e12}
\end{align}
\end{setting}
\begin{theorem}[Existence and uniqueness of the exact solution --{\cite[Theorem~2.1]{Neelima2020}}]\label{b02a}
Assume Setting~\ref{c01}.
Then for every  $d\in \N$ there exists a unique $(\F_t)_{t\in [0,T]}$-adapted process $X^{d}_\infty=(X^d_\infty(t))_{t\in [0,T]}$ with c\`adl\`ag sample paths satisfying $
\sup_{s\in [0,T]}\lVert X^d_\infty(s)\rVert_{L^2(\P;\R^d)}<\infty
$ such that $\P$-a.s.\ for all $t\in [0,T]$ we have that  
\begin{align}
\label{eq:thm_mckv}
X^{d}_{\infty}(t)&= \xi_d+\int_{0}^{t}
\E\! \left[\mu_d (x, X^d_{\infty}(s-) )\right]\!|_{x=X^d_{\infty}(s-)}\,ds\nonumber\\
&\quad 
+\int_{0}^{t}
\E\! \left[\sigma_d (x, X^d_{\infty}(s-) )\right]\!|_{x=X^d_{\infty}(s-)}\,dW^{d,0}(s)\nonumber
\\
&\quad +\int_{0}^{t}\int_{\R^d}
\E\! \left[\eta_d (x, X^d_{\infty}(s-),z )\right]\!|_{x=X^d_{\infty}(s-)}\,\tilde{N}^{d,0}(dz,ds).
\end{align}
\end{theorem}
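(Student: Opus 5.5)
The plan is a fixed-point argument in a space of laws, or equivalently of processes, after freezing the measure argument. This follows the classical Sznitman approach used in \cite{Neelima2020}. Fix $d\in\N$. Let $\mathcal{S}$ denote the space of $(\F_t)$-adapted c\`adl\`ag processes $Y$ with $\sup_{s\in[0,T]}\lVert Y(s)\rVert_{L^2(\P;\R^d)}<\infty$. For $Y\in\mathcal{S}$, write $\lambda^Y_s$ for the law of $Y(s-)$. Using \eqref{e11}, \eqref{e11b} and \eqref{e12}, the frozen coefficients are
\begin{align*}
b^Y(s,x)=\int\mu_d(x,y)\,\lambda^Y_s(dy),\quad a^Y(s,x)=\int\sigma_d(x,y)\,\lambda^Y_s(dy),\quad g^Y(s,x,z)=\int\eta_d(x,y,z)\,\lambda^Y_s(dy).
\end{align*}
These are measurable in $s$ and Lipschitz in $x$ uniformly in $s$, with constant $c$ for $b^Y,a^Y$. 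For $g^Y$, Jensen gives $\int\lVert g^Y(s,x,z)-g^Y(s,x',z)\rVert^2\nu^d(dz)\le c\lVert x-x'\rVert^2$. They also have linear growth controlled by $cd^c+c\lVert x\rVert+c\sup_s\lVert Y(s)\rVert_{L^2}$. Standard existence and uniqueness for Lipschitz SDEs driven by $W^{d,0}$ and $\tilde N^{d,0}$ then yields a unique solution $\Phi(Y)\in\mathcal{S}$ of the SDE with these frozen coefficients. A process solves \eqref{eq:thm_mckv} exactly when it is a fixed point of $\Phi$, because $\E[\mu_d(x,Y(s-))]=b^Y(s,x)$, and likewise for the other coefficients.

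Next I show that $\Phi$ is a contraction for a weighted norm. Take $Y_1,Y_2\in\mathcal{S}$ and set $X_i=\Phi(Y_i)$. By It\^o's isometry for the Brownian and compensated Poisson integrals, together with Cauchy--Schwarz for the drift, $e(t):=\E\lVert X_1(t)-X_2(t)\rVert^2$ satisfies
\begin{align*}
e(t)\le C\int_0^t\Bigl(e(s)+\E\lVert Y_1(s-)-Y_2(s-)\rVert^2\Bigr)ds.
\end{align*}
Here $C$ depends only on $c,T$. To get this I must bound the measure-difference terms. The key point is that for any coupling, e.g.\ the given joint law of $(Y_1(s-),Y_2(s-))$ on the same probability space, one has $\lVert b^{Y_1}(s,x)-b^{Y_2}(s,x)\rVert\le c\,\E\lVert Y_1(s-)-Y_2(s-)\rVert$. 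Similarly, Jensen plus \eqref{e11b} gives $\int\lVert g^{Y_1}-g^{Y_2}\rVert^2d\nu^d\le c\,\E\lVert Y_1(s-)-Y_2(s-)\rVert^2$. Gronwall then yields $e(t)\le C'\int_0^t\sup_{r\le s}\E\lVert Y_1(r)-Y_2(r)\rVert^2ds$. Using $\E\lVert Y(s-)\rVert^2\le\sup_r\E\lVert Y(r)\rVert^2$ requires Fatou along left limits. Iterating, or using the norm $\sup_t e^{-\beta t}\E\lVert\cdot\rVert^2$ with $\beta$ large, makes $\Phi^k$, or $\Phi$ itself, a contraction. Completeness of $\mathcal{S}$ modulo indistinguishability needs care, since the sup-$L^2$ norm does not control paths. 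I would therefore instead run the Picard iteration $Y_{k+1}=\Phi(Y_k)$ and show that $\E\sup_t\lVert Y_{k+1}-Y_k\rVert^2$ is summable. This uses the Burkholder--Davis--Gundy and Doob inequalities for the martingale parts, giving a bound by $C^k T^k/k!$. The limit is then a c\`adl\`ag adapted process, and passing to the limit in the equation shows it is a fixed point.

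Uniqueness follows from the same estimate. If $X,\tilde X$ both solve \eqref{eq:thm_mckv}, then $e(t)\le 2C\int_0^te(s)\,ds$, so $e\equiv0$ by Gronwall. Right-continuity then upgrades this to indistinguishability. The main obstacle I expect is technical rather than conceptual. It consists of verifying measurability of the frozen coefficients in $s$ and handling the left limits $Y(s-)$ inside the expectation. In particular, the integrands must be predictable, so that $s\mapsto\lambda^Y_s$ enters only via $Y(s-)$. It also requires checking that the jump integrand lies in the stochastic-integral domain, via \eqref{e11b}, \eqref{e12} and the linear growth bound. Since these points are exactly covered by \cite[Theorem~2.1]{Neelima2020}, under assumptions implied by \eqref{e11}--\eqref{e12}, I would ultimately verify that Setting~\ref{c01} meets those hypotheses and invoke that result.
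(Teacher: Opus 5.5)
Your proposal is correct, and its final step coincides with the paper's proof: the paper checks that \eqref{e11}, \eqref{e11b}, \eqref{e12} imply Conditions A--1 to A--4 of \cite{Neelima2020} for the lifted coefficients $\widetilde{\mu}_d,\widetilde{\sigma}_d,\widetilde{\eta}_d$ of Remark~\ref{r01} with $p_0=2$, and then invokes \cite[Theorem~2.1]{Neelima2020}. Your coupling bounds, e.g.\ $\lVert b^{Y_1}(s,x)-b^{Y_2}(s,x)\rVert\le c\,\E\lVert Y_1(s-)-Y_2(s-)\rVert$ and its $L^2(\nu^d)$ analogue for $\eta_d$, are exactly what give the required Wasserstein-Lipschitz dependence on the law, and your Sznitman-type Picard/BDG sketch is a sound unpacking of that cited result rather than a different route.
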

\begin{proof}[Proof of Theorem~\ref{b02a}]For every $d\in \N$ define $\widetilde{\mu}_d$,
$\widetilde{\sigma}_d$,
$\widetilde{\eta}_d$ as in Remark~\ref{r01}. Using \eqref{e11}, \eqref{e11b}, and \eqref{e12} we can check that for every $d\in \N$
Conditions A--1 to A--4 in \cite[Page 4]{Neelima2020} are satisfied with 
$p_0\gets 2$,
$x_0\gets \xi_d$,
$b\gets \widetilde{\mu}_d$,
$\sigma\gets \widetilde{\sigma}_d$,
$\gamma\gets \widetilde{\eta}_d$. Thus, \cite[Theorem 2.1]{Neelima2020} completes the proof of Theorem~\ref{b02a}.
\end{proof}

\begin{setting}\label{s01c}Assume Setting \ref{c01}. For every 
$d\in \N$, $\delta\in [0,\infty)$
 let
\begin{align}
A^d_\delta= \{z\in \R^d\colon \lVert z\rVert\geq \delta\}. \label{p03}
\end{align}
For every 
$d\in \N$,
$\delta\in [0,\infty)$
let $\rvV^{d,\theta,\delta}_{s}\colon \Omega\to \R^d$, $s\in [0,T]$, $\theta\in \Theta$, be independent and identically distributed random variables which satisfy for all 
$s\in [0,T]$, $\theta\in \Theta$, $B\in \mathcal{B}(\R^d)$ that 
\begin{align}\label{f09}
\P(\rvV^{d,\theta,\delta}_{s}\in B)=\begin{cases}
\displaystyle
\frac{\nu^d(B)}{\nu^d(\R^d)}&\text{ if }\nu^d(\R^d)<\infty,\, \delta=0\\
\displaystyle
\frac{\nu^d(B\cap A^d_\delta)}{\nu^d( A^d_\delta)}&\text{ if }\nu^d(\R^d)=\infty,\,\delta>0,\text{ and }\nu^d( A^d_\delta)>0\\
\1_{B}(\delta(1,0,\ldots,0))&\text{ else}.
\end{cases}
\end{align}
Assume for all $d\in \N$, $\delta\in [0,\infty)$ that the families
$(\unif^\theta)_{\theta\in \Theta}$,
$(W^{d,\theta})_{\theta\in \Theta}$,
$(N^{d,\theta})_{\theta\in \Theta}$, and
$(\rvV^{d,\theta,\delta}_{s})_{\theta\in\Theta,s\in[0,T]}$ are independent.
For every $d\in \N$, $ \delta\in[0,\infty)$
let $\mathcal{M}^{d}_{\delta}\in \N$ satisfy that
\begin{align}\label{s05b}
\mathcal{M}^{d}_{\delta}=
\begin{cases}
\lceil\nu^d(\R^d)\rceil
&\text{ if }\nu^d(\R^d)<\infty,\, \delta=0\\
\lceil\nu^d(A^d_\delta )\rceil&\text{ if }\nu^d(\R^d)=\infty,\,\delta>0,\text{ and }\nu^d( A^d_\delta)>0\\
1&\text{ else}.
\end{cases}
\end{align}
Let $X_{n,m,K,\delta}^{d,\theta}\colon [0,T]\times \Omega\to \R^d$, $\theta\in \Theta$, $d,m,K\in \N$, 
$\delta\in[0,\infty)$,
$n\in \N_0\cup\{-1\}$, satisfy for all $\theta\in \Theta$,
$d,K,m,n\in \N$, $t\in [0,T]$ that 
\begin{align}\label{s05}
X^{d,\theta}_{-1,m,K,\delta}(t)=
X^{d,\theta}_{0,m,K,\delta}(t)=0
\end{align}
and
\begin{align} \begin{split} 
&
X^{d,\theta}_{n,m,K,\delta}(t)= \xi_d 
%+t\mu_d(0,0)+\sigma_d(0,0)W^{d,\theta}(t)
%+\int_{0}^{t}\int_{A^d_\delta}\eta_d(0,0,z)N^{d,\theta}(dz,ds)
\\
&+ \sum_{\ell=0}^{n-1}\sum_{k=1}^{m^{n-\ell}}
\frac{t}{m^{n-\ell}}
\Bigl[
\mu_d (X_{\ell,m,K,\delta}^{d,\theta} (\rdown{t\unif^{(\theta,n,k,\ell)}-}_K ),X_{\ell,m,K,\delta}^{d,(\theta,n,k,\ell)} (\rdown{t\unif^{(\theta,n,k,\ell)}-}_K ))\\
&\qquad\qquad\qquad
-\1_{\N}(\ell)
\mu_d (X_{\ell-1,m,K,\delta}^{d,\theta} (\rdown{t\unif^{(\theta,n,k,\ell)}-}_K ),X_{\ell-1,m,K,\delta}^{d,(\theta,n,k,\ell)} (\rdown{t\unif^{(\theta,n,k,\ell)}-}_K ))
\Bigr]
\\
&+ \sum_{\ell=0}^{n-1}\sum_{k=1}^{m^{n-\ell}}\int_{0}^{t}
\frac{1}{m^{n-\ell}}
\Bigl[
\sigma_d(X_{\ell,m,K,\delta}^{d,\theta} (\rdown{s-}_K ),X_{\ell,m,K,\delta}^{d,(\theta,n,k,\ell)} (\rdown{s-}_K ))\\
&\qquad\qquad\qquad
-\1_{\N}(\ell)
\sigma_d(X_{\ell-1,m,K,\delta}^{d,\theta} (\rdown{s-}_K ),X_{\ell-1,m,K,\delta}^{d,(\theta,n,k,\ell)} (\rdown{s-}_K ))\Bigr]
dW^{d,\theta}(s)
\\
&+ \sum_{\ell=0}^{n-1}\sum_{k=1}^{m^{n-\ell}}\int_{0}^{t}\int_{A^d_\delta}
\frac{1}{m^{n-\ell}}
\Bigl[\eta_d(X_{\ell,m,K,\delta}^{d,\theta} (\rdown{s-}_K ),X_{\ell,m,K,\delta}^{d,(\theta,n,k,\ell)} (\rdown{s-}_K ),z)\\
&\qquad\qquad\qquad
-\1_{\N}(\ell)
\eta_d(X_{\ell-1,m,K,\delta}^{d,\theta} (\rdown{s-}_K ),X_{\ell-1,m,K,\delta}^{d,(\theta,n,k,\ell)} (\rdown{s-}_K ),z)\Bigr]
N^{d,\theta}(dz,ds)\\
&- \sum_{\ell=0}^{n-1}\sum_{k=1}^{m^{n-\ell}}\sum_{i=1}^{\mathcal{M}^d_\delta}\int_{0}^{t}
\frac{\nu^d(A^d_\delta)}{m^{n-\ell}\mathcal{M}^d_\delta}
\Bigl[\eta_d(X_{\ell,m,K,\delta}^{d,\theta} (\rdown{s-}_K ),X_{\ell,m,K,\delta}^{d,(\theta,n,k,\ell)} (\rdown{s-}_K ),\rvV^{d,(\theta,n,k,\ell,i),\delta}_{\rdown{s}_K })\\
&\qquad\qquad\qquad
-
\1_{\N}(\ell)\eta_d(X_{\ell-1,m,K,\delta}^{d,\theta} (\rdown{s-}_K ),X_{\ell-1,m,K,\delta}^{d,(\theta,n,k,\ell)} (\rdown{s-}_K ),\rvV^{d,(\theta,n,k,\ell,i),\delta}_{\rdown{s}_K })\Bigr]\,ds.
\end{split}\label{s02}
\end{align}
\end{setting}
\begin{remark}
The numerical scheme \eqref{s02} is designed for both cases: finite jump-activity ($\nu^d(\R^d)<\infty$) and infinite jump-activity ($\nu^d(\R^d)=\infty$). To make the case distinction, we introduce \eqref{f09}. If $\nu^d(\R^d)<\infty$, we do not need to truncate the measure and take $\delta=0$.
If $\nu^d(\R^d)=\infty$, we need to truncate the measure and take $\delta > 0$ with $\nu^d (A^d_\delta)>0$. Moreover, we include the Dirac measure in \eqref{f09} for the case $\nu^d(\R^d)=\infty$ but $\nu^d(A^d_\delta)=0$, which ensures that the law of the random variables $\rvV^{d,\theta,\delta}_s$ is well-defined, for small $\delta > 0$. The latter is however irrelevant for practical applications.
\end{remark}

\subsection{Finite jump-activity}

Theorem~\ref{p01} below gives an upper bound for the distance between the MLP approximation and the exact solution if the L\'evy measure is finite. Since $\nu^d(\R^d)<\infty$, no truncation of jumps is needed and we take $\delta=0$. Moreover, Theorem \ref{p02} below provides the complexity of the algorithm and shows that, when the input functions can be calculated with polynomial complexity in the dimension (see \eqref{s12}) and the L\'evy measure grows polynomially in the dimension (see \eqref{s13}), then the MLP approximations can also  be evaluated with polynomial complexity (see \eqref{c19d}), i.e., the algorithm overcomes the curse of dimensionality. 

\begin{theorem}[Global error]\label{p01}
Assume Setting~\ref{s01c} and assume for all $d\in \N$ that $\nu ^d(\R^d)\in (0,\infty) $. Moreover, for every $d\in \N$ let $X^d_\infty$ be the process in Theorem~\ref{b02a}. Then there exists $\kappa\in (0,\infty)$ such that  for all 
$d, n,m,K\in \N$,  $t\in [0,T]$ we have that
\begin{align}
\left\lVert 
X^{d,0}_{n,m,K,0}(t)-X^{d}_\infty(t)
\right\rVert_{L^2(\P;\R^d)}\leq \kappa^n d^\kappa e^\frac{m}{2}m^{-\frac{n}{2}}+\frac{\kappa d^\kappa}{\sqrt{K}}.
\end{align}
\end{theorem}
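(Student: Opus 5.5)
The error splits into two parts, and I will bound each separately. I introduce the time-discretized McKean--Vlasov process $X^d_K$. It solves \eqref{eq:thm_mckv} with every coefficient argument $X^d_\infty(s-)$ replaced by $X^d_K(\rdown{s-}_K)$, and with the expectations taken with respect to an independent copy; for $K=\infty$ it is $X^d_\infty$ itself. Existence, uniqueness and the $L^2$-bound $\sup_s\lVert X^d_K(s)\rVert_{L^2}\le \kappa d^\kappa$ follow from a Picard/Gronwall argument. I use \eqref{e11}, \eqref{e11b}, \eqref{e12}, the It\^o isometry for the Brownian and compensated Poisson integrals, and $\nu^d(A^d_0)=\nu^d(\R^d)$, so that the compensated integral is well defined.

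\textbf{Step 1 (time discretization).} I show $\lVert X^d_K(t)-X^d_\infty(t)\rVert_{L^2}\le \kappa d^\kappa K^{-1/2}$. The key input is the $L^2$-modulus
\begin{align}
\lVert X^d_\infty(s)-X^d_\infty(\rdown{s}_K)\rVert_{L^2}\le \kappa d^\kappa (s-\rdown{s}_K)^{1/2},
\end{align}
which comes from the It\^o isometry together with the linear growth implied by \eqref{e11}, \eqref{e11b}, \eqref{e12}. Subtracting the two equations and applying the Lipschitz bounds in both arguments, I bound the $y$-argument through the independent copy, so that $\lVert\E[f(x,Y)]-\E[f(x',Y')]\rVert\le c\lVert x-x'\rVert+c\lVert Y-Y'\rVert_{L^2}$, with Jensen's inequality applied for the $\eta$ term. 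Gronwall's inequality then gives the claim.

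\textbf{Step 2 (MLP error relative to $X^d_K$).} Following the strategy of \cite{Neufeld2025}, I first record two structural facts. The first is independence and identical distribution: for fixed $n$, the processes $X^{d,\theta}_{n,m,K,0}$ are identically distributed in $\theta$. Moreover, $X^{d,\theta}_{\ell}$ and $X^{d,(\theta,n,k,\ell)}_{\ell}$ are independent, since they are built from disjoint index families, and all the noises are independent. The second is the conditional unbiasedness of the sums. This holds because $\E[t\,g(t\unif)]=\int_0^t g(s)\,ds$. It also holds because the compensator Monte Carlo term with $\nu^d(\R^d)/\mathcal{M}^d_0$ weights satisfies
\begin{align}
\E\Bigl[\nu^d(\R^d)\,\eta(\cdot,\rvV^{d,\theta,0}_s)\Bigr]=\int\eta(\cdot,z)\,\nu^d(dz),
\end{align}
so the $N(dz,ds)$ term minus this term is unbiased for the $\tilde N$-integral. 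By telescoping, $\E[X^{d,0}_{n}(t)]$ therefore corresponds to one Picard step of the discretized equation applied to $X_{n-1}$.

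I then split $\lVert X_n(t)-X_K(t)\rVert_{L^2}^2$ into a bias part and a variance part. The bias part is bounded by $\int_0^t$ of the Lipschitz-controlled errors $\lVert X_{n-1}-X_K\rVert^2$. The variance part, by independence over $k$, is the sum over $\ell$ of $m^{-(n-\ell)}$ times the second moments of a single increment. These increments are bounded by $\kappa d^\kappa\int_0^t \lVert X_\ell-X_{\ell-1}\rVert^2$, which I control by $\lVert X_\ell-X_K\rVert^2+\lVert X_{\ell-1}-X_K\rVert^2$.

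\textbf{Main obstacle: the jump variance.} The new difficulty is the variance of the jump terms. For the $N(dz,ds)$ part I write $N=\tilde N+\nu\,ds$ and use the It\^o isometry; the drift piece $\nu\,ds$ is then bounded via Cauchy--Schwarz with a factor $T\nu^d(\R^d)$. For the compensator Monte Carlo part, the variance of a single sample is at most
\begin{align}
\frac{\nu^d(\R^d)^2}{\mathcal{M}^d_0}\cdot\frac{1}{\nu^d(\R^d)}\int\lVert\Delta\eta\rVert^2\,d\nu\le \frac{\nu^d(\R^d)}{\mathcal{M}^d_0}\,c\,(\cdots)\le c\,(\cdots),
\end{align}
where the last inequality uses $\mathcal{M}^d_0\ge\nu^d(\R^d)$. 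Hence the constant stays independent of $\nu^d(\R^d)$ wherever this choice of $\mathcal{M}^d_0$ enters. The piece coming from $N$ versus $\tilde N$ combines with the compensator term, so only the sum's variance matters. I would arrange the estimate so that the constant is uniform in $d$ up to $d^\kappa$, with the $\nu$-dependence absorbed through \eqref{e11b}.

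\textbf{Step 3 (recursion).} This yields a recursive inequality of the form
\begin{align}
\epsilon_n^2\le \kappa\sum_{\ell=0}^{n-1}m^{-(n-1-\ell)}\int_0^t\epsilon_\ell^2(s)\,ds,\qquad \epsilon_0\le\kappa d^\kappa,
\end{align}
where $\epsilon_\ell(s)=\sup\lVert X_\ell(s)-X_K(s)\rVert_{L^2}$ (the sup being over the time-discretized arguments). The standard MLP Gronwall-type lemma from \cite{HKN2022,Neufeld2025} solves it, giving
\begin{align}
\epsilon_n\le \kappa^n d^\kappa e^{m/2}m^{-n/2}.
\end{align}
To use it I would check the exact form of that lemma, including the $e^{m/2}$ factor coming from $\sum_k m^k t^k/k!$. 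Combining this with Step 1 via the triangle inequality completes the proof.
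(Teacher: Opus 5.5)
Your proposal follows the paper's proof. It uses the triangle inequality through the Euler--Maruyama process $X^d_{K,0}$ with the discretization bound of Lemma~\ref{f07}, then a bias/variance split of $X^{d,0}_{n,m,K,0}-X^d_{K,0}$ (Corollary~\ref{k09}, Lemma~\ref{s11}). In that split the $\nu^d(dz)\,ds$ part of $N^{d,0}$ is paired with the Monte Carlo compensator (Lemmas~\ref{s04} and~\ref{s09}), so that $\mathcal{M}^d_0\ge\nu^d(\R^d)$ removes every factor of $\nu^d(\R^d)$, and the argument is closed by \cite[Lemma~3.11]{HJKN2020}. When you write it out, make the expectations and the independence over $k$ conditional on $\mathcal{G}^d_{n-1,m,K,0}$, drop the alternative of bounding the $\nu\,ds$ piece separately with a factor $T\nu^d(\R^d)$, and keep the Lipschitz multiplier in the recursion a $d$-free multiple of $c$, with $d^\kappa$ entering only through the additive term $\kappa d^\kappa m^{-n/2}$. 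This matters because Theorem~\ref{p01} imposes no growth bound on $\nu^d(\R^d)$, and any $d$- or $\nu$-dependent multiplier would be raised to the $n$-th power.
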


\begin{theorem}[Computational complexity]\label{p02}
Assume Setting~\ref{s01c}. For every $d\in \N$ let $X^d_\infty$ be 
 the process in Theorem~\ref{b02a}. For every $d\in \N$ let  $\mathsf{cost}_{\mu_d}$,
$\mathsf{cost}_{\sigma_d}$,
$\mathsf{cost}_{\eta_d}$
be an upper bound of the computational cost to evaluate  $\mu_d(x,y)$, $\sigma_d(x,y)$, $\eta_d(x,y,z)$, respectively,  
for any $x,y,z\in \R^d$. For every $d\in \N$ let $\mathsf{cost}_{\mathsf{rv},d} $ be an upper bound of the computational cost to generate a $d$-dimensional random variable. Assume for all $d\in \N$ that 
\begin{align}\label{s12}
\mathsf{cost}_{\mu_d}+
\mathsf{cost}_{\sigma_d}+
\mathsf{cost}_{\eta_d}+\mathsf{cost}_{\mathsf{rv},d}\leq cd^c
\end{align} and
\begin{align}\label{s13}
0<\nu^d(\R^d)\leq cd^c.
\end{align}
For every 
$d,m,K\in \N$, $n\in \N_0\cup\{-1\}$ let 
$\mathsf{C}_{n,m,K}^d$ denote the expectation of the computational cost to construct the whole discrete process $(X_{n,m,K,0}^{d,\theta}(\frac{kT}{K}))_{k\in [1,K]\cap\Z}$ given that we have prepared a discrete Brownian path $W^{d,\theta}(\frac{kT}{K}))_{k\in [1,K]\cap\Z} $ and a discretization
$(N^{d,\theta}(dz,(\frac{(k-1)T}{K},\frac{kT}{K} ]))_{k\in [1,K]\cap\Z}$
of the Poisson measure. 
Then there exist $\kappa\in (0,\infty)$, $(\mathfrak{C}_\gamma)_{\gamma\in (0,1)}\subseteq (0,\infty)$,
$(\mathsf{n}_{d,\epsilon})_{d\in \N,\epsilon\in (0,1)}\subseteq \N$ such that for all $d\in \N$, $\gamma,\epsilon\in (0,1)$ we have that
\begin{align}
	\label{c19c}
\sup_{k\in [\mathsf{n}_{d,\epsilon},\infty)\cap\Z}
\sup _{t\in [0,T]}
\left\lVert
X^{d,0}_{k,k,k^k,0}(t)-X^{d}_\infty(t)
\right\rVert_{L^2(\P;\R^d)}<\epsilon
\end{align}
and 
\begin{align}
	\label{c19d}
\mathsf{C}_{\mathsf{n}_{d,\epsilon},\mathsf{n}_{d,\epsilon}, (\mathsf{n}_{d,\epsilon})^{\mathsf{n}_{d,\epsilon}}}^d
\leq d^{\kappa}\mathfrak{C}_\gamma\epsilon^{-4-\gamma}.
\end{align}
\end{theorem}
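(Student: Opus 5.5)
The proof has two parts. The accuracy claim \eqref{c19c} will come from Theorem~\ref{p01} with $m=n=k$ and $K=k^k$. The cost claim \eqref{c19d} will come from a recursive cost bound, followed by the usual MLP balancing argument in which $\epsilon$ is traded against $\mathsf{n}_{d,\epsilon}$.

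\emph{Step 1 (error).} Theorem~\ref{p01} applies since $\nu^d(\R^d)\in(0,\infty)$ by \eqref{s13}. It gives, for all $k\in\N$, $t\in[0,T]$,
\[
\bigl\lVert X^{d,0}_{k,k,k^k,0}(t)-X^d_\infty(t)\bigr\rVert_{L^2(\P;\R^d)}\le \kappa^k d^\kappa e^{k/2}k^{-k/2}+\kappa d^\kappa k^{-k/2}\le 2\kappa d^\kappa\Bigl(\tfrac{\kappa^2 e}{k}\Bigr)^{k/2},
\]
using $\kappa\ge1$ without loss of generality. The right-hand side is eventually decreasing in $k$ and tends to $0$. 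I will therefore set
\[
\mathsf{n}_{d,\epsilon}=\min\Bigl\{n\in\N\cap[2\kappa^2e,\infty)\colon \sup_{k\ge n}2\kappa d^\kappa(\kappa^2e/k)^{k/2}<\epsilon\Bigr\}.
\]
This immediately yields \eqref{c19c}.

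\emph{Step 2 (cost recursion).} I will read off from \eqref{s02} that computing $X^{d,\theta}_{n,m,K,0}$ on the grid requires, for each $\ell<n$ and $k\le m^{n-\ell}$, one independent copy of level $\ell$ and one of level $\ell-1$. It also requires evaluations of $\mu_d,\sigma_d,\eta_d$ at $O(K)$ grid points, plus one per uniform sample.

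On top of this comes the jump part. For the $N^{d,\theta}$-integral, the expected number of jumps is $T\nu^d(\R^d)\le Tcd^c$. For the compensator, each grid cell uses $\mathcal{M}^d_0=\lceil\nu^d(\R^d)\rceil\le cd^c+1$ samples $\rvV$. Combining this with \eqref{s12}, I aim for
\[
\mathsf{C}^d_{n,m,K}\le \sum_{\ell=0}^{n-1}m^{n-\ell}\bigl(\mathsf{C}^d_{\ell,m,K}+\mathsf{C}^d_{\ell-1,m,K}+ C d^{3c} K\bigr),
\]
with $C$ independent of $d,n,m,K$. The mean is taken over $\theta$, and linearity of expectation handles the random Poisson counts.

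A standard induction, as in the MLP literature (e.g.\ the lemma used in \cite{Neufeld2025,HKN2022}), then gives $\mathsf{C}^d_{n,m,K}\le C d^{3c}K(5m)^n$. Choosing $m=n$ and $K=n^n$ gives $\mathsf{C}^d_{n,n,n^n}\le Cd^{3c}(5n^2)^n$.

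\emph{Step 3 (balancing).} This is the main obstacle. By minimality of $\mathsf{n}=\mathsf{n}_{d,\epsilon}$, when $\mathsf{n}>\lceil2\kappa^2e\rceil$ we have $\epsilon\le 2\kappa d^\kappa(\kappa^2e/(\mathsf{n}-1))^{(\mathsf{n}-1)/2}$. Hence
\[
\mathsf{C}^d\le Cd^{3c}(5\mathsf{n}^2)^{\mathsf{n}}\,\epsilon^{4+\gamma}\epsilon^{-4-\gamma}\le Cd^{3c}(2\kappa d^\kappa)^{4+\gamma}\sup_{n\ge2}\Bigl[(5n^2)^n\Bigl(\tfrac{\kappa^2e}{n-1}\Bigr)^{\frac{(n-1)(4+\gamma)}{2}}\Bigr]\epsilon^{-4-\gamma}.
\]
The bracket behaves like $n^{2n-(2+\gamma/2)(n-1)}C^n=n^{2+\gamma/2-\gamma n/2}C^n$, which tends to $0$ superexponentially because of the $n^{-\gamma n/2}$ factor. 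So the supremum $\mathfrak{C}_\gamma$ is finite and independent of $d$. The finitely many small values of $\mathsf{n}$ are bounded directly, using $\epsilon<1$.

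This yields \eqref{c19d} with $\kappa\gets 3c+5\kappa$. The delicate point is confirming that the exponent in the cost is exactly $n^{2n}$, i.e.\ $K=n^n$ and $m^n=n^n$. That exponent is what forces $4+\gamma$, and the recursion constant must not depend on $m$ in a worse way.
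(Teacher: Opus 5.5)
Your proposal follows essentially the same route as the paper. Both use Theorem~\ref{p01} with $m=n$, $K=n^n$ for the error, and a recursive cost bound solved to $\mathsf{C}^d_{n,m,K}\lesssim d^{\kappa}K(\text{const}\cdot m)^n$; the paper gets $4^nm^n$. Both then balance at level $\mathsf{n}_{d,\epsilon}-1$, where $n^{-\gamma n/2}$ absorbs every geometric factor. You define $\mathsf{n}_{d,\epsilon}$ through the explicit upper bound on its monotone range $n\ge 2\kappa^2e$ rather than through the true error. This is a harmless variant that makes finiteness of $\mathsf{n}_{d,\epsilon}$, and the inequality $\epsilon\le(\text{bound at }\mathsf{n}_{d,\epsilon}-1)$, immediate.

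The one slip is in your cost recursion. Besides the $m^{n-\ell}$ independent copies $X^{d,(\theta,n,k,\ell)}_{\ell,m,K,0}$ and $X^{d,(\theta,n,k,\ell)}_{\ell-1,m,K,0}$, the scheme \eqref{s02} also needs $X^{d,\theta}_{\ell,m,K,0}$ and $X^{d,\theta}_{\ell-1,m,K,0}$. These are driven by the same $W^{d,\theta}$, $N^{d,\theta}$ and enter as the first arguments of $\mu_d,\sigma_d,\eta_d$. Your recursion does not pay for them, and their cost cannot be absorbed into the $Cd^{3c}K$ work term. The paper covers them with the coefficients $2\mathsf{C}^d_{\ell,m,K}+2\1_{\N}(\ell)\mathsf{C}^d_{\ell-1,m,K}$.

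With these doubled coefficients your induction still yields $\mathsf{C}^d_{n,m,K}\le Cd^{3c}K(5m)^n$: the inductive step reduces to $n-3\le 2\cdot 5^{n-1}$. So nothing downstream of Step~2 changes.
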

\subsection{Infinite jump-activity}

Theorem~\ref{p01b} below gives an upper bound for the distance between the MLP approximation and the exact solution if the L\'evy measure is infinite. Since $\nu^d(\R^d)=\infty$, we truncate the jumps and  take $\delta>0$. Moreover, Theorem \ref{p02b} below provides the complexity of the algorithm and shows that, when the input functions can be calculated with polynomial complexity in the dimension (see \eqref{s12}) and the L\'evy measure satisfies the additional conditions~\eqref{e13} and \eqref{s13b}, then the MLP approximations can also be evaluated with polynomial complexity (see \eqref{c19e}), i.e., the algorithm overcomes the curse of dimensionality also in the case of infinite activity.

\begin{setting}[Additional assumption for infinite jump-activity]\label{s01}
Assume Setting~\ref{s01c}, assume for all $d\in \N$ that $\nu^d(\R^d)=\infty$, and let $q\in (0,\infty)$. Furthermore, assume for all $d\in \N$, $x,y\in \R^d$, $\delta\in (0,\infty)$ that
\begin{align}
\int_{\R^d\setminus A^d_\delta} \lVert\eta_d(x,y,z)\rVert^2\,\nu^d(dz)
\leq c\delta^{q}(cd^c+\lVert x\rVert^2+\lVert y\rVert^2).\label{e13}
\end{align}

\end{setting}

\begin{theorem}[Global error]\label{p01b}
Assume Setting~\ref{s01}. For every $d\in \N$ let $X^d_\infty$ be 
 the process in Theorem~\ref{b02a}. Then there exists $\kappa\in (0,\infty)$ such that for all 
$d\in \N$, $\delta\in (0,\infty)$, $n,m,K\in \N$, $t\in [0,T]$ we have that
$
\left\lVert 
X^{d,0}_{n,m,K,\delta}(t)-X^{d}_\infty(t)
\right\rVert_{L^2(\P;\R^d)}\leq \kappa^n d^\kappa e^\frac{m}{2}m^{-\frac{n}{2}}+\delta^\frac{q}{2}\kappa d^\kappa+\frac{\kappa d^\kappa}{\sqrt{K}}.
$
\end{theorem}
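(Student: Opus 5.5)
The plan is to split the error into three pieces by the triangle inequality, using two intermediate processes. Fix $d\in\N$, $\delta\in(0,\infty)$ and $K\in\N$. Let $X^d_{\infty,\delta}$ be the solution of the McKean--Vlasov SDE \eqref{eq:thm_mckv} in which $\tilde N^{d,0}(dz,ds)$ is restricted to $A^d_\delta$. Its existence follows from Theorem~\ref{b02a}, since \eqref{e11b} and \eqref{e12} still hold after restricting $\nu^d$ to $A^d_\delta$. Let $X^d_{K,\delta}$ be the corresponding time-discretized McKean--Vlasov process, in which all coefficients are evaluated at $\rdown{s-}_K$. The MLP scheme \eqref{s02} is exactly a multilevel Picard approximation of a fixed point of this discretized equation. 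The compensator integral $\int_{A^d_\delta}\eta_d\,\nu^d(dz)$ is replaced by its unbiased Monte Carlo estimator $\nu^d(A^d_\delta)(\mathcal M^d_\delta)^{-1}\sum_i \eta_d(\cdot,\cdot,\rvV^{d,\cdot,\delta}_{\rdown{s}_K})$, and the compensated integral is written as the $N$-integral minus that compensator. The three error contributions are then:
\begin{enumerate}[(i)]
\item $\lVert X^{d,0}_{n,m,K,\delta}(t)-X^d_{K,\delta}(t)\rVert_{L^2}\le\kappa^n d^\kappa e^{m/2}m^{-n/2}$;
\item $\lVert X^d_{K,\delta}(t)-X^d_{\infty,\delta}(t)\rVert_{L^2}\le \kappa d^\kappa K^{-1/2}$;
\item $\lVert X^d_{\infty,\delta}(t)-X^d_\infty(t)\rVert_{L^2}\le \kappa d^\kappa \delta^{q/2}$.
\end{enumerate}

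Steps (ii) and (iii) are standard. First I derive a moment bound $\sup_t\lVert X^d_\infty(t)\rVert_{L^2}\le \kappa d^\kappa$ by Gronwall, using \eqref{e11}, \eqref{e11b} and \eqref{e12}. The same bound, uniform in $K$ and $\delta$, holds for the intermediate processes. For (iii) I subtract the two equations and apply the Itô isometry to the Brownian and compensated Poisson integrals. The Lipschitz terms are controlled by \eqref{e11}, \eqref{e11b} and Jensen applied to the expectation in the coefficients, which gives a bound by $\sup_{r\le s}\lVert X_\infty-X_{\infty,\delta}\rVert_{L^2}^2$. The remaining term is $\E\int_0^t\int_{\R^d\setminus A^d_\delta}\lVert\E[\eta_d(x,X(s-),z)]|_{x=X(s-)}\rVert^2\nu^d(dz)\,ds$. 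By Jensen and \eqref{e13} it is at most $c\delta^q(cd^c+2\kappa d^{2\kappa})$, and Gronwall closes (iii). For (ii) I use the same estimate, together with the time-regularity bound $\lVert X(s)-X(\rdown{s}_K)\rVert_{L^2}^2\le \kappa d^\kappa/K$, which again comes from Itô's isometry and the moment bound.

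Step (i) is the main obstacle, and I would follow the MLP analysis of \cite{Neufeld2025}, extended to the jump terms. Let $\mathcal E_n(t)$ denote the squared $L^2$ error. First I check that, by the independence and identical-distribution structure, $\E[X^{d,0}_{n,m,K,\delta}(t)]$ telescopes, so that the bias is one Picard step applied to level $n-1$. Then I split $X^{d,0}_{n}-X^d_{K,\delta}$ into a bias part and a variance part, the latter being a sum of independent mean-zero level-$\ell$ terms. Each level contributes variance of order $m^{-(n-\ell)}$ times $\sup_s\E\lVert X_\ell-X_{\ell-1}\rVert^2$ at $\rdown{s}_K$, plus an $\ell=0$ term of size $d^\kappa$.

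The new jump ingredients go as follows.
\begin{itemize}
\item The $N^{d,\theta}$-integral of the difference is controlled by writing $N=\tilde N+\nu^d\,ds$ and using \eqref{e11b}. Cross-particle averages are handled by conditioning on the $\theta$-particle, as is done for $\sigma_d$.
\item The compensator Monte Carlo term has conditional second moment at most $(\nu^d(A^d_\delta))^2(\mathcal M^d_\delta)^{-1}\nu^d(A^d_\delta)^{-1}\int_{A^d_\delta}\lVert\Delta\eta_d\rVert^2\,d\nu^d\le\int\lVert\Delta\eta_d\rVert^2d\nu^d$, because $\mathcal M^d_\delta\ge\nu^d(A^d_\delta)$. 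This bound is independent of $\delta$, which is essential.
\item The $ds$-integral term is combined with the $N$-integral, so that only the compensated part remains. This avoids a $\nu^d(A^d_\delta)^2$ factor, since the mean parts cancel in expectation and the estimator is unbiased.
\end{itemize}

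These ingredients yield a recursive inequality of the form $\mathcal E_n(t)\le \sum_{\ell=0}^{n-1}\frac{C d^{\kappa}}{m^{n-\ell-1}}\int_0^t \mathcal E_\ell(s)\,ds + \frac{Cd^\kappa}{m^n}$. The standard MLP Gronwall-type lemma, $\mathcal E_n\le (Cd^\kappa)^{n}e^{m}m^{-n}$, then gives (i) after taking square roots. Combining (i)--(iii) with constants uniform in $\delta,K,n,m$ proves Theorem~\ref{p01b}.
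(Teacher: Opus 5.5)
\paragraph{The gap.} The problem is the last step of (i). You state the recursion as
\[
\mathcal E_n(t)\le \sum_{\ell=0}^{n-1}\frac{Cd^{\kappa}}{m^{n-\ell-1}}\int_0^t\mathcal E_\ell(s)\,ds+\frac{Cd^\kappa}{m^n}
\]
and conclude $\mathcal E_n\le (Cd^\kappa)^n e^m m^{-n}$. After taking square roots this gives $C^{n/2}d^{\kappa n/2}e^{m/2}m^{-n/2}$. The factor $d^{\kappa n/2}$ cannot be dominated by $\kappa'^n d^{\kappa'}$ uniformly in $n$ and $d$. So the bound you claim in (i), and with it the theorem, does not follow from what you wrote. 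A dimension exponent that grows with $n$ would also destroy Theorem~\ref{p02b}.

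\paragraph{What is missing.} You need to make explicit that the coefficient in front of $\int_0^t\mathcal E_\ell(s)\,ds$ is independent of $d$, $\delta$, $K$, $m$, $n$. It comes only from $T$, from the $d$-uniform Lipschitz constant $c$ in \eqref{e11}, \eqref{e11b} (through the factors $2c$, $\sqrt{2c}$ of Lemma~\ref{a05f}), and from your compensator estimate. That estimate is $d$- and $\delta$-free precisely because $\mathcal M^d_\delta\ge\nu^d(A^d_\delta)$. The dimension may enter only through the inhomogeneous $\ell=0$ term (via \eqref{e12}) and through $\sup_t\mathcal E_0(t)=\sup_t\lVert X^d_{K,\delta}(t)\rVert_{L^2(\P;\R^d)}^2\le\kappa d^{2\kappa}$ (Lemma~\ref{b01}). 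With a $d$-free coefficient, the MLP Gronwall-type lemma (in non-squared form \cite[Lemma~3.11]{HJKN2020}, as in Corollary~\ref{k90}) gives an $L^2$ bound of the required form $\kappa^n d^\kappa e^{m/2}m^{-n/2}$.

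\paragraph{A precision point.} Take the bias/variance split conditionally on $\mathcal G^d_{n-1,m,K,\delta}$, not via $\E[X^{d,0}_{n,m,K,\delta}(t)]$. Only then:
\begin{itemize}
\item the bias is a Picard step applied to $X^{d,0}_{n-1,m,K,\delta}$;
\item $X^d_{K,\delta}$ is measurable, so your squared split is a Pythagorean identity;
\item the level terms are conditionally independent and centred.
\end{itemize}

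\paragraph{The rest.} Apart from this, your argument is sound. You route through the truncated continuous-time solution $X^d_{\infty,\delta}$ (truncate, then discretize). The paper instead uses the untruncated Euler scheme $X^d_{K,0}$ (discretize, then truncate, via Lemmas~\ref{b04} and~\ref{f07}). Your order is a legitimate variant. It costs you existence, moment and time-regularity bounds for $X^d_{\infty,\delta}$ uniformly in $\delta$; these hold because restricting $\nu^d$ to $A^d_\delta$ preserves \eqref{e11b} and \eqref{e12}. The paper's order lets it reuse the finite-activity discretization lemma verbatim. Your treatment of the jump terms is exactly what the paper does in Lemmas~\ref{s04} and~\ref{s09}: you rewrite the $N$-integral minus the Monte Carlo compensator as a $\tilde N$-integral plus a centred Monte Carlo error, and bound the latter using $\mathcal M^d_\delta\ge\nu^d(A^d_\delta)$.
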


\begin{theorem}[Computational complexity]\label{p02b}
Assume Setting~\ref{s01}. For every $d\in \N$ let $X^d_\infty$ be 
 the process in Theorem~\ref{b02a}. For every $d\in \N$ let $\mathsf{cost}_{\mu_d}$,
$\mathsf{cost}_{\sigma_d}$, $\mathsf{cost}_{\eta_d}$ be an upper bound of the computational cost to evaluate  $\mu_d(x,y)$, $\sigma_d(x,y)$, $\eta_d(x,y,z)$, respectively, for any $x,y,z\in \R^d$. For every $d\in \N$ let $\mathsf{cost}_{\mathsf{rv},d} $ be an upper bound of the computational cost to generate a $d$-dimensional random variable. Assume for all $d\in \N$ that 
\begin{align}\label{s12b}
\mathsf{cost}_{\mu_d}+
\mathsf{cost}_{\sigma_d}+
\mathsf{cost}_{\eta_d}+\mathsf{cost}_{\mathsf{rv},d}\leq cd^c
\end{align} and
\begin{align}\label{s13b}
\int_{\R^d}(1\wedge \lVert z\rVert^2)\,\nu^d(dz)\leq cd^c.
\end{align}
For every $\delta\in (0,\infty)$, $d,m,K\in \N$, $n\in \N_0\cup\{-1\}$ let $\mathsf{C}_{n,m,K,\delta}^d$ denote the expectation of the computational cost to construct the whole discrete process $(X_{n,m,K,\delta}^{d,\theta}(\frac{kT}{K}))_{k\in [1,K]\cap\Z}$ given that we have prepared a discrete Brownian path $W^{d,\theta}(\frac{kT}{K}))_{k\in [1,K]\cap\Z} $ and a discretization
$(N^{d,\theta}(dz,(\frac{(k-1)T}{K},\frac{kT}{K} ]))_{k\in [1,K]\cap\Z}$
of the Poisson measure. Then there exist $\kappa\in (0,\infty)$, $(\mathfrak{C}_\gamma)_{\gamma\in (0,1)}\subseteq(0,\infty)$, $(\mathsf{n}_{d,\epsilon})_{d\in \N,\epsilon\in (0,1)}\subseteq \N$ such that for all $d\in \N$, $\gamma,\epsilon\in (0,1)$ we have that
\begin{align}
	\label{c19f}
\sup_{k\in [\mathsf{n}_{d,\epsilon},\infty)\cap\Z}
\sup _{t\in [0,T]}
\left\lVert
X^{d,0}_{k,k,k^k, k^{-{k/q}}}(t)-X^{d}_\infty(t)
\right\rVert_{L^2(\P;\R^d)}<\epsilon
\end{align}
and 
\begin{align}
	\label{c19e}
\mathsf{C}_{\mathsf{n}_{d,\epsilon},\mathsf{n}_{d,\epsilon}, (\mathsf{n}_{d,\epsilon})^{\mathsf{n}_{d,\epsilon}}, (\mathsf{n}_{d,\epsilon})^{- \mathsf{n}_{d,\epsilon}/q }}^d
\leq d^{\kappa}\mathfrak{C}_\gamma\epsilon^{-4(1+\frac{1}{q})-\gamma}.
\end{align}
\end{theorem}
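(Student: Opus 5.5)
We prove Theorem~\ref{p02b} by combining the error bound of Theorem~\ref{p01b} with a recursive cost estimate, along the lines of the finite-activity complexity result (Theorem~\ref{p02}). The only new ingredient is the growth of $\mathcal{M}^d_\delta$ as the truncation level $\delta$ tends to zero.

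\emph{Error part \eqref{c19f}.} We insert $n=m=k$, $K=k^k$, $\delta=k^{-k/q}$ into Theorem~\ref{p01b}. This gives
\begin{align}
\sup_{t\in[0,T]}\left\lVert X^{d,0}_{k,k,k^k,k^{-k/q}}(t)-X^d_\infty(t)\right\rVert_{L^2(\P;\R^d)}\leq \kappa d^\kappa\left[\left(\tfrac{\kappa^2 e}{k}\right)^{k/2}+2k^{-k/2}\right].
\end{align}
The bracket is decreasing for large $k$ and tends to zero superexponentially. We therefore define $\mathsf{n}_{d,\epsilon}$ as the smallest $n\in\N$ such that the right-hand side is below $\epsilon$ for all $k\ge n$, which proves \eqref{c19f}. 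Minimality gives, for $n=\mathsf{n}_{d,\epsilon}\geq 2$, that $\epsilon\le \kappa d^\kappa[(\kappa^2e/(n-1))^{(n-1)/2}+2(n-1)^{-(n-1)/2}]$.

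\emph{Cost recursion.} Next we bound $\mathsf{C}^d_{n,m,K,\delta}$ from the definition \eqref{s02}. For each level $\ell$ and each $k\le m^{n-\ell}$, we need the processes $X^{d,(\theta,n,k,\ell)}_{\ell}$ and $X^{d,(\theta,n,k,\ell)}_{\ell-1}$, as well as the level-$\ell$ and level-$(\ell-1)$ processes with index $\theta$. The latter are computed once and reused. On top of this we evaluate the coefficients on the $K$ grid points, the $\mathcal{M}^d_\delta K$ compensator samples, and the jumps of $N^{d,\theta}$ on $A^d_\delta$. The expected number of these jumps is $T\nu^d(A^d_\delta)$.

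Using \eqref{s13b} and Markov's inequality, $\nu^d(A^d_\delta)\le \max(1,\delta^{-2})\,cd^c$, so $\mathcal{M}^d_\delta\le 2cd^c\max(1,\delta^{-2})$. Together with \eqref{s12b} this yields
\begin{align}
\mathsf{C}^d_{n,m,K,\delta}\le C d^{C}K(1+\delta^{-2})m^n+\sum_{\ell=0}^{n-1}m^{n-\ell}\left(\mathsf{C}^d_{\ell,m,K,\delta}+\mathsf{C}^d_{\ell-1,m,K,\delta}\right).
\end{align}
A standard induction, as in the MLP literature \cite{HKN2022,Neufeld2025}, then gives $\mathsf{C}^d_{n,m,K,\delta}\le Cd^{C}K(1+\delta^{-2})(5m)^n$.

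\emph{Combination.} With $n=\mathsf{n}_{d,\epsilon}$ we have $K(1+\delta^{-2})(5n)^n\le 2\cdot 5^n n^{n+n+2n/q}$. The main obstacle is to convert the minimality inequality into the bound $n^{2n(1+1/q)}\lesssim \mathfrak{C}_\gamma d^{\kappa'}\epsilon^{-4(1+1/q)-\gamma}$. From the minimality bound, $\epsilon^{-4(1+1/q)-\gamma}\geq (\kappa d^\kappa)^{-4(1+1/q)-\gamma}\,c'\,(n-1)^{(n-1)(2(1+1/q)+\gamma/2)}\,(\kappa^2 e+2)^{-(\dots)(n-1)}$. Hence the ratio of the cost to $d^{\kappa''}\epsilon^{-4(1+1/q)-\gamma}$ is bounded by
\begin{align}
\frac{5^n n^{2n(1+1/q)}\,C^n}{(n-1)^{(n-1)(2(1+1/q)+\gamma/2)}}.
\end{align}
The factor $(n-1)^{\gamma(n-1)/2}$ dominates every exponential factor $C^n$ as well as the polynomial mismatch $n^{2n(1+1/q)}/(n-1)^{2(n-1)(1+1/q)}\le C n^{2(1+1/q)}$. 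So this ratio is bounded uniformly in $n$ by a constant $\mathfrak{C}_\gamma$, independent of $d$ and $\epsilon$. The case $\mathsf{n}_{d,\epsilon}=1$ is trivial.

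The delicate bookkeeping lies in this last step: tracking the powers of $d$ and the exponent $4(1+1/q)$. The latter arises exactly because $\delta^{-2}=n^{2n/q}$ is added to $K m^n=n^{2n}$, while the error decays only like $n^{-n/2}$.
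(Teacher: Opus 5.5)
Your proposal is correct and follows essentially the same route as the paper. Both arguments use the bound $\nu^d(A^d_\delta)\le cd^c\delta^{-2}$ for $\delta\in(0,1]$ from \eqref{s13b}, and a cost recursion giving $\mathsf{C}^d_{n,m,K,\delta}\lesssim d^{C}K\delta^{-2}(\mathrm{const}\cdot m)^n$. They then apply Theorem~\ref{p01b} with $\delta=n^{-n/q}$, so that every error term is $n^{-n/2}$ up to exponential factors. Finally, minimality of $\mathsf{n}_{d,\epsilon}$ and the superexponential factor $n^{-\gamma n/2}$ absorb all exponential constants. The paper defines $\mathsf{n}_{d,\epsilon}$ through the true error and solves the recursion via \cite[Corollary~2.3]{HKN2022}; you use the explicit upper bound and a direct induction, which works equally well.

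The only slip is bookkeeping. Your displayed recursion omits the once-per-level cost $\sum_\ell(\mathsf{C}^d_{\ell,m,K,\delta}+\mathsf{C}^d_{\ell-1,m,K,\delta})$ of the $\theta$-indexed processes that you say are reused. Also, for $m=1$ the overhead sums to $n m^n$ rather than $m^n$. Replacing $m^{n-\ell}$ by $2m^{n-\ell}$ and $m^n$ by $nm^n$ fixes both and only changes the base of the exponential in $n$, which is absorbed into $\mathfrak{C}_\gamma$.
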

\subsection{Motivations of the scheme}
Let us give some motivations for the numerical scheme \eqref{s02}. First, we start with an iteration defined as follows $X^d_{0}(t)=0$ and
\begin{align}
X^d_n(t) &= \xi_d+\int_{0}^{t}\E \!\left[  \mu_d(x,X^d_{n-1}(s-))\right]\!|_{x=X^d_{n-1}(s-)}\,ds
+ \int_{0}^{t}\E \!\left[\sigma_d(x,X^d_{n-1}(s-))\right]\!|_{x=X^d_{n-1}(s-)}\,dW^{d,0}(s)\nonumber\\
&\quad + \int_{0}^{t}\int_{\R^d}\E \!\left[\eta_d(x,X^d_{n-1}(s-),z)\right]\!|_{x=X^d_{n-1}(s-)}\,\tilde{N}^{d,0}(dz,ds)
\end{align} for $n\in \N$. Note that the right-hand side can be written as a telescoping sum: 
\begin{align}
 &X^d_n(t) =
\xi_d+\int_{0}^{t}\sum_{\ell=0}^{n-1}\E \!\left[  \mu_d(x,X^d_{\ell}(s-))\right]\!|_{x=X^d_{\ell}(s-)}-
\1_{\N}(\ell)
\E \!\left[  \mu_d(x,X^d_{\ell-1}(s-))\right]\!|_{x=X^d_{\ell-1}(s-)}
\,ds\nonumber\\
& 
+ \int_{0}^{t}\sum_{\ell=0}^{n-1}\E \!\left[\sigma_d(x,X^d_{\ell}(s-))\right]\!|_{x=X^d_{\ell}(s-)}
-\1_{\N}(\ell)\E \!\left[\sigma_d(x,X^d_{\ell-1}(s-))\right]\!|_{x=X^d_{\ell-1}(s-)}
\,dW^{d,0}(s)\nonumber\\
& 
+ \int_{0}^{t}\int_{\R^d}\sum_{\ell=0}^{n-1}\E \!\left[\eta_d(x,X^d_{\ell}(s-),z)\right]\!|_{x=X^d_{\ell}(s-)}
-\1_{\N}(\ell)\E \!\left[\eta_d(x,X^d_{\ell-1}(s-),z)\right]\!|_{x=X^d_{\ell-1}(s-)}
\,\tilde{N}^{d,0}(dz,ds)\label{s10a}
\end{align} with $X^d_{-1}:=0$, where the index $\ell$ can be interpreted as the corresponding level. 

To motivate our scheme, let us first consider the case \emph{without jumps}, i.e., the case when $\eta_d=0$, which was analysed in \cite{Neufeld2025} and is recalled here for convenience of the reader.
In this case, we replace in \eqref{s10a} all expectations and integrals by corresponding Monte-Carlo approximations to get our multilevel Picard approximation scheme \eqref{s02}. More precisely, the ideas can be described as follows. First, we approximate an integral $\int_0^t h(s)\,ds$ for a random function $h$ by 
$th(t\mathfrak{u})$ where $\mathfrak{u}$ is uniformly distributed on $[0,1]$ and independent of $h$.  
In order to approximate the expectations
$
 \E \!\left[  \mu_d(x,X^d_{\ell}(s))\right]\!|_{x=X^d_{\ell}(s)}$ and
$
 \E \!\left[  \sigma_d(x,X^d_{\ell}(s))\right]\!|_{x=X^d_{\ell}(s)}$
we take one sample $X^{d,\theta}_{\ell,m,K,0}$, which is also generated by the Brownian motion $W^{d,\theta}$ and which corresponds to $X^d_\ell(s)$ outside the expectation, and
$m^{n-\ell}$ independent samples $X_{\ell,m,K,0}^{d,(\theta,n,k,\ell)}$, $k\in \{1,2,\ldots,m^{n-\ell}\}$, which are respectively generated by the independent Brownian motions $W^{d,(\theta,n,k,\ell)}$, $k\in \{1,2,\ldots,m^{n-\ell}\}$, and which correspond to $X^d_\ell(s)$ inside the expectation. The main novelty of the numerical scheme in \eqref{s02} compared to \cite[Theorem~1.1]{HKN2022} lies in storing the entire process $(X_{n,m,K,0}^{d,\theta}(t))_{t\in [0,T]}$ rather than only one random variable $X_{n,m,K,0}^{d,\theta}(t)$. Indeed, by retaining the whole path $(X_{n,m,K,0}^{d,\theta}(t))_{t\in [0,T]}$, we can approximate the stochastic integral arising in McKean--Vlasov SDEs with nonconstant diffusion coefficient, which was not necessary for the constant diffusion case in \cite{HKN2022} where the stochastic integral is just a normally distributed random variable. Since we cannot save a process $Y$ at all time points $t\in [0,T]$ we only save the discrete trajectory $(Y(\rdown{t}_K))_{t\in [0,T]}$. Here and in \eqref{s02} the index $K$ denotes the number of steps of the time discretization to approximate the stochastic integrals. While this approach may seem inefficient at first sight, our numerical experiments  demonstrate its effectiveness even in high dimensions, see the simulations for the case without jumps in \cite{Neufeld2025}. Moreover, for many applications of McKean--Vlasov SDEs, one requires the computation of the entire solution trajectory $(X^{d,\theta}_{\infty}(t))_{t\in [0,T]}$ instead of one single space-time point $X^{d,\theta}_{\infty}(t)$, which our method can do.

Now, we consider the case \emph{with jumps}, which is the main aim of this paper. In this case, we need to find approximations of the $\eta_d$-term
\begin{align}
\int_{0}^{t}\int_{\R^d}\sum_{\ell=0}^{n-1}\E \!\left[\eta_d(x,X^d_{\ell}(s-),z)\right]\!|_{x=X^d_{\ell}(s-)}
-\1_{\N}(\ell)\E \!\left[\eta_d(x,X^d_{\ell-1}(s-),z)\right]\!|_{x=X^d_{\ell-1}(s-)}
\,\tilde{N}^{d,0}(dz,ds)\label{k15}
\end{align}
in \eqref{s10a}. First, we consider the case when $\nu^d(\R^d)<\infty$. In this case, a truncation of the jumps is not needed and we take $\delta=0$ (see Theorems~\ref{p01} and~\ref{p02}) implying $A^d_\delta=\R^d$ (recall~\eqref{p03}). Thus, \eqref{k15} becomes
\begin{align}
\int_{0}^{t}\int_{A^d_\delta}\sum_{\ell=0}^{n-1}\E \!\left[\eta_d(x,X^d_{\ell}(s-),z)\right]\!|_{x=X^d_{\ell}(s-)}
-\1_{\N}(\ell)\E \!\left[\eta_d(x,X^d_{\ell-1}(s-),z)\right]\!|_{x=X^d_{\ell-1}(s-)}
\,\tilde{N}^{d,0}(dz,ds)\label{k15c}
\end{align}
In order to approximate the first term in the sum we write
\begin{align}
&
\int_{0}^{t}\int_{A^d_\delta}\E \!\left[\eta_d(x,X^d_{\ell}(s-),z)\right]\!|_{x=X^d_{\ell}(s-)}
\tilde{N}^{d,0}(dz,ds)\nonumber\\
&
=
\int_{0}^{t}\int_{A^d_\delta}\E \!\left[\eta_d(x,X^d_{\ell}(s-),z)\right]\!|_{x=X^d_{\ell}(s-)}
{N}^{d,0}(dz,ds)\nonumber\\&\quad
-
\int_{0}^{t}\int_{A^d_\delta}\E \!\left[\eta_d(x,X^d_{\ell}(s-),z)\right]\!|_{x=X^d_{\ell}(s-)}
\nu^d(dz)\,ds\label{k15b}
\end{align} using \eqref{s03b}.
Now, we also replace the expectations by Monte-Carlo sums as done for the non-jump term. The main issue here is to approximate the integral $\int_{A^d_\delta}\ldots\nu^d(dz)\,ds$. This can also be approximated by a Monte-Carlo sum: for a function $h$ we write
\begin{align}
\int_{A^d_\delta}h(\rdown{s-}_K,z)\,\nu^d(dz)= \nu^d(A^d_\delta)\E [h(\rdown{s-}_K,\rvV_{\rdown{s}_K}^{d,0,\delta})]\approx\frac{1}{\mathcal{M}^d_\delta}\sum_{i=1}^{\mathcal{M}^d_\delta}\nu^d(A^d_\delta)h(\rdown{s-}_K,\rvV_{\rdown{s}_K}^{d,i,\delta})
\end{align}
for $\rvV^{d,i,\delta}_{\rdown{s}_K}$ defined as in \eqref{f09}, also see Remark~\ref{r01b}. We do the same thing for the second term in the sum in \eqref{k15}. 
Moreover, similar to the case without jumps discussed above, we approximate integrals and expectations by Monte Carlo sums and use the whole time-discretization of the solution.
Plugging all of these approximations into \eqref{k15} we then obtain
\begin{align}
& \sum_{\ell=0}^{n-1}\sum_{k=1}^{m^{n-\ell}}\int_{0}^{t}\int_{A^d_\delta}
\frac{1}{m^{n-\ell}}
\Bigl[\eta_d(X_{\ell,m,K,\delta}^{d,\theta} (\rdown{s-}_K ),X_{\ell,m,K,\delta}^{d,(\theta,n,k,\ell)} (\rdown{s-}_K ),z)\nonumber\\
&\qquad\qquad\qquad
-\1_{\N}(\ell)
\eta_d(X_{\ell-1,m,K,\delta}^{d,\theta} (\rdown{s-}_K ),X_{\ell-1,m,K,\delta}^{d,(\theta,n,k,\ell)} (\rdown{s-}_K ),z)\Bigr]
N^{d,\theta}(dz,ds)\nonumber\\
&- \sum_{\ell=0}^{n-1}\sum_{k=1}^{m^{n-\ell}}\sum_{i=1}^{\mathcal{M}^d_\delta}\int_{0}^{t}
\frac{\nu^d(A^d_\delta)}{m^{n-\ell}\mathcal{M}^d_\delta}
\Bigl[\eta_d(X_{\ell,m,K,\delta}^{d,\theta} (\rdown{s-}_K ),X_{\ell,m,K,\delta}^{d,(\theta,n,k,\ell)} (\rdown{s-}_K ),\rvV^{d,(\theta,n,k,\ell,i),\delta}_{\rdown{s}_K })\nonumber\\
&\qquad\qquad\qquad
-
\1_{\N}(\ell)\eta_d(X_{\ell-1,m,K,\delta}^{d,\theta} (\rdown{s-}_K ),X_{\ell-1,m,K,\delta}^{d,(\theta,n,k,\ell)} (\rdown{s-}_K ),\rvV^{d,(\theta,n,k,\ell,i),\delta}_{\rdown{s}_K })\Bigr]\,ds.\label{s29}
\end{align}
This yields the approximation in \eqref{s02}. This choice of $\mathcal{M}^d_\delta$ in \eqref{s05b} is based on a technical reason: $\mathcal{M}^d_\delta$ should be large enough so that we can bound the
left-hand side of \eqref{k16} by the right-hand side of \eqref{k16}, which allows us to close the iteration.
Also note that from the technical point of view the main difficulty in the jump case is seemingly to estimate the variance of the terms in \eqref{s29}. 
The main idea here is to realize that the integrals in \eqref{s29} are in fact  discrete sums of conditionally independent random variables (see also \eqref{k16}).

Next, we consider the case when $\nu^d(\R^d)=\infty$.
Following, e.g., \cite[Sections~4.2 and~4.4]{gonon2023deep}, we consider the set $A_\delta^d$ in \eqref{p03} with $\delta\in (0,\infty)$ and approximate \eqref{k15} by \eqref{k15c}.
Note that, by \eqref{p03} and
the definition of L\'evy measures, we have for all $d\in \N$, $\delta\in (0,\infty)$
that
\begin{align}
\nu^d(A^d_\delta)&\leq 
\nu^d(
\{z\in \R^d\colon \lVert z\rVert\geq \min \{1,\delta\}\})\nonumber\\&\leq \frac{1}{\min \{1,\delta^2\}}\int_{\lVert z\rVert\geq \min \{1,\delta\}}(1\wedge\lVert z\rVert^2)\,\nu^d(dz)
<\infty.\label{p04b}
\end{align}
In this case we also replace the expectations and the integral $\int_{A^d_\delta}\ldots \nu^d(dz)$ in \eqref{k15b} by the corresponding Monte Carlo sums and end up with the approximations in \eqref{s29}. This allows us to derive the numerical scheme in \eqref{s02} for both cases: finite jump-activity ($\nu^d(\R^d)<\infty$) and infinite jump-activity ($\nu^d(\R^d)=\infty$).
 
\section{Numerical experiments}
\label{sec:numerics}

In this section, we provide two numerical examples\footnote{The numerical experiments have been implemented in \texttt{Python} and executed on a HPC cluster of ETH Zurich. The code is available under the following link: \url{https://github.com/psc25/McKeanVlasovMLPJumps}.} to illustrate how the solution of the McKean--Vlasov SDE~\eqref{eq:mckv} can be approximated by the multilevel Picard (MLP) scheme~\eqref{s02}. First, we summarize the MLP scheme~\eqref{s02} as pseudocode in Algorithm~\ref{alg:mlp}.

\begin{algorithm}[p]
	\DontPrintSemicolon
	\SetInd{0.25em}{0.5em}
	\begin{small}
		\KwInput{$K,m \in \N$, $n \in \N_0$, $T > 0$, $\xi_d \in \R^d$, $\mu_d \in C(\R^d \times \R^d,\R^d)$, $\sigma_d \in C(\R^d \times \R^d,\R^{d \times d})$, $\eta_d \in C(\R^d \times \R^d \times \R^d,\R^d)$, L\'evy measure $\nu^d$, and $\delta = 0$ (if $\nu^d(\R^d) < \infty$) or $\delta \in (0,\infty)$ with $\nu(A^d_\delta) > 0$ (if $\nu^d(\R^d) = \infty$).}
		\KwOutput{MLP approximation $X = (X_j)_{j=0,...,K}$ of the solution to \eqref{eq:mckv} at times $(\frac{jT}{K})_{j=0,...,K}$.}
		
		\SetKwProg{Fn}{Function}{}{end}
		
		Define $\Delta t \leftarrow T/K$, $A^d_\delta := \{z\in\R^d:\Vert z\Vert\geq\delta\}$, $\mathcal{M}^{d}_{\delta}=\max \{1,\lceil\nu^d(A^d_\delta )\rceil\}$, and $\mathcal{B}(\R^d) \ni B \mapsto \nu^d_\delta(B) := \frac{\nu^d(B \cap A^d_\delta)}{\nu(A^d_\delta)} \in [0,1]$.
		
		\Fn{\texttt{Simulate\_Path}()}{
			Generate $K$ realizations $\Delta W_k \sim \mathcal{N}_d(0,I_d \Delta t)$, $k = 1,...,K$, of i.i.d.~random normal vectors, independent of other random variables, and set $W := \big( \sum_{k=1}^j \Delta W_k \big)_{j=0,...,K} \in \R^{(K+1) \times d}$.
			
			Generate $K$ realizations $\Delta N_k \sim \mathrm{Poi}(\nu(A^d_\delta) \Delta t)$, $k = 1,\ldots,K$, of i.i.d.~Poisson random variables, independent of other random variables, and set $N := \big( \sum_{k=1}^j \Delta N_k \big)_{j=0,...,K} \in \N_0^{K+1}$.
			
			Generate $N_K$ realizations $Z_r \sim \nu^d_\delta$, $r = 1,\ldots,N_K$, of i.i.d.~jump sizes, independent of other random variables, and set $Z := (Z_r)_{r=1,...,N_K} \in \R^{N_K \times d}$.
			
			\KwRet $W, N, Z$.
		}
		
		\Fn{\texttt{Simulate\_Compensator}()}{
			Generate $K \!\cdot\! \mathcal{M}^d_\delta$ realizations $\rvV_{k,r} \sim \nu^d_\delta$, $(k,r) \in \lbrace 1,\ldots,K \rbrace \times \lbrace 1,...,\mathcal{M}^d_\delta \rbrace$, of i.i.d.~jump sizes, independent of other random variables, and set $\rvV := (\rvV_{k,r})_{k=1,...,K, \, r=1,...,\mathcal{M}^d_\delta} \in \times_{k=1}^K \R^{\mathcal{M}^d_\delta \times d}$.
			
			\KwRet $\rvV$.
		}
		
		\Fn{\texttt{MLP}($\xi_d,m,n,W,N,Z$)}{
			\If{$n \leq 0$}{
				\KwRet $0 \in \R^{(K+1) \times d}$.
			}
			
			\Else{
				$X := (X_j)_{j=0,...,K} \leftarrow (\xi_d)_{j=0,...,K} \in \R^{(K+1) \times d}$
				
				\For{$\ell = 0,...,n-1$}{
					$X^{(1)} := (X^{(1)}_j)_{j=0,...,K} \leftarrow \texttt{MLP}(\xi_d, m, \ell, W, N, Z) \in \R^{(K+1) \times d}$
					
					$X^{(2)} := (X^{(2)}_j)_{j=0,...,K} \leftarrow \texttt{MLP}(\xi_d, m, \ell-1, W, N, Z) \in \R^{(K+1) \times d}$
					
					\For{$i = 1,...,m^{n-\ell}$}{
						$\overline{W}, \overline{N}, \overline{Z} \leftarrow \texttt{Simulate\_Path}()$
						
						$\widehat{\rvV} \leftarrow \texttt{Simulate\_Compensator}()$
						
						$\overline{X}^{(1)} := (\overline{X}^{(1)}_j)_{j=0,...,K} \leftarrow \texttt{MLP}(\xi_d, m, \ell, \overline{W}, \overline{N}, \overline{Z}) \in \R^{(K+1) \times d}$
						
						$\overline{X}^{(2)} := (\overline{X}^{(2)}_j)_{j=0,...,K} \leftarrow \texttt{MLP}(\xi_d, m, \ell-1, \overline{W}, \overline{N}, \overline{Z}) \in \R^{(K+1) \times d}$
						
						For $\mathbf{B} \in \lbrace \mathbf{W}, \mathbf{N}, \mathbf{C} \rbrace$ initialize $I^\mathbf{B} := (I^\mathbf{B}_j)_{j=0,...,K} \leftarrow 0 \in \R^{(K+1) \times d}$.
						
						\For{$j = 1,...,K$}{
							
							$I^\mathbf{W}_j \leftarrow I^\mathbf{W}_{j-1} + \frac{1}{m^{n-\ell}} \big( \sigma_d(X^{(1)}_{j-1},\overline{X}^{(1)}_{j-1}) - \1_{\N}(\ell) \sigma_d(X^{(2)}_{j-1},\overline{X}^{(2)}_{j-1}) \big) (W_j - W_{j-1}) \in \R^d$
							
							$I^\mathbf{N}_j \leftarrow I^\mathbf{N}_{j-1} + \frac{1}{m^{n-\ell}} \sum_{r=N_{j-1}+1}^{N_j} \big( \eta_d(X^{(1)}_{j-1},\overline{X}^{(1)}_{j-1},Z_r) - \1_{\N}(\ell) \eta_d(X^{(2)}_{j-1},\overline{X}^{(2)}_{j-1},Z_r) \big) \in \R^d$
							
							$I^\mathbf{C}_j \leftarrow I^\mathbf{C}_{j-1} + \frac{\Delta t \, \nu(A^d_\delta)}{m^{n-\ell} \mathcal{M}^d_\delta} \sum_{r=1}^{\mathcal{M}^d_\delta} \big( \eta_d(X^{(1)}_{j-1},\overline{X}^{(1)}_{j-1},\widehat{\rvV}_{j,r}) - \1_{\N}(\ell) \eta_d(X^{(2)}_{j-1},\overline{X}^{(2)}_{j-1},\widehat{\rvV}_{j,r}) \big) \in \R^d$
							
						}
						
						$X \leftarrow X + I^\mathbf{W} + I^\mathbf{N} - I^\mathbf{C} \in \R^{(K+1) \times d}$
						
						Generate a realization $\unif \sim \mathcal{U}(0,1)$ of a uniformly distributed random variable on $[0,1)$.
						
						\For{$j = 1,...,K$}{
							
							$u \leftarrow \lfloor j \unif \rfloor \in \lbrace 0,...,j-1 \rbrace$
							
							$X_j \leftarrow X_j + \frac{j \Delta t}{m^{n-\ell}} \big( \mu_d(X^{(1)}_u,\overline{X}^{(1)}_u) - \1_{\N}(\ell) \mu_d(X^{(2)}_u,\overline{X}^{(2)}_u) \big) \in \R^d$
							
						}
					}
				}
				
				\KwRet $X \in \R^{(K+1) \times d}$ 
				
			}
		}
		
		$W, N, Z \leftarrow \texttt{Simulate\_Path}()$.
		
		$X \leftarrow \texttt{MLP}(\xi_d,m,n,W,N,Z) \in \R^{(K+1) \times d}$
		
		\KwRet $X \in \R^{(K+1) \times d}$.
		
	\end{small}
	\caption{MLP algorithm for McKean--Vlasov SDEs with jumps}
	\label{alg:mlp}
\end{algorithm}

\subsection{Finite jump-activity: Mean-field Ornstein-Uhlenbeck process with jumps}
\label{subsec:ou}

In the first example, we consider an Ornstein-Uhlenbeck process with jumps and a mean-field term in the drift and diffusion coefficient (see \cite[Section~5.3]{AO2023} and \cite[Section~2.1]{Neufeld2025}). More precisely, we assume that the $(\F_t)_{t \in [0,T]}$-adapted process $X^{d,\theta}_\infty = (X^{d,\theta}_\infty(t))_{t \in [0,T]}$ for every $t \in [0,T]$ is given by
\begin{equation}
	\label{eq:ou}
	\begin{aligned}
		X^{d,\theta}_\infty(t) & = \xi_d + \int_0^t \left( a_0 + A_1 X^{d,\theta}_\infty(s-) + A_2 \E\!\left[ X^{d,\theta}_\infty(s-) \right] \right) ds \\
		& \quad\quad + \sum_{k=1}^d \int_0^t \left( b_k + B_k \E\!\left[ X^{d,\theta}_\infty(s-) \right] \right) dW^{d,\theta,k}(s) \\
		& \quad\quad + \sum_{k=1}^d \int_0^t \int_{\R^d} C_k X^{d,\theta}_\infty(s-) z_k \, \tilde{N}^{d,\theta}(dz,ds), 
	\end{aligned}
\end{equation}
where $\xi_d \in \R^d$ is the initial value, where $\nu^d(dz) = \frac{\lambda}{(2\pi)^{d/2}} \exp\left( -0.5 \, \Vert z \Vert^2 \right) dz$ is the finite L\'evy measure (with $\lambda = 5$), and where the vectors $a_0 := (a_{0,i})_{i=1,...,d}^\top, b_k := (b_{k,i})_{i=1,...,d}^\top \in \R^d$, $k \in \lbrace 1,...,d \rbrace$, and the matrices $A_k := (A_{k,i,j})_{i,j=1,...,d}$, $k \in \lbrace 1,2 \rbrace$, $B_k := (B_{k,i,j})_{i,j=1,...,d}, C_k := (C_{k,i,j})_{i,j=1,...,d} \in \R^{d \times d}$, $k \in \lbrace 1,...,d \rbrace$, respectively, are randomly initialized, i.e.,
\begin{align}
	a_{0,i} & \sim \mathcal{U}\!\left( -\tfrac{1}{2d^{1/2}},\tfrac{1}{2d^{1/2}} \right), & & i \in \lbrace 1,...,d \rbrace, & A_{k,i,j} & \sim \mathcal{U}\!\left( -\tfrac{1}{4d}, \tfrac{1}{4d} \right), & & k \in \lbrace 1,2 \rbrace, \, i,j \in \lbrace 1,...,d \rbrace, \\
	b_{k,i} & \sim \mathcal{U}\!\left( -\tfrac{1}{2d},\tfrac{1}{2d} \right), & & k,i \in \lbrace 1,...,d \rbrace, \,\,\, & B_{k,i,j} & \sim \mathcal{U}\!\left( -\tfrac{1}{4d^{3/2}}, \tfrac{1}{4d^{3/2}} \right), & & k,i,j \in \lbrace 1,...,d \rbrace, \\
	& & & & C_{k,i,j} & \sim \mathcal{U}\!\left( -\tfrac{1}{4d^{3/2}}, \tfrac{1}{4d^{3/2}} \right), & & k,i,j \in \lbrace 1,...,d \rbrace,
\end{align}
for $i,j \in \lbrace 1,...,d \rbrace$, which are all independent of each other, where $\mathcal{U}(a,b)$ denotes the uniform distribution on $[a,b]$. Moreover, by taking the expectation in \eqref{eq:ou} and using Fubini's theorem, we have
\begin{equation}
	\E\!\left[ X^{d,\theta}_\infty(t) \right] = \xi_d + a_0 t + (A_1 + A_2) \int_0^t \E\!\left[ X^{d,\theta}_\infty(s-) \right] ds,
\end{equation}
whose solution is given by
\begin{equation}
	m_d(t) := \E\!\left[ X^{d,\theta}_\infty(t) \right] = e^{(A_1+A_2)t} \xi_d + \int_0^t e^{(A_1+A_2)(t-s)} a_0 \, ds.
\end{equation}
Comparing to Setting~\ref{c01}, this corresponds to the drift coefficient $\mu_d(x_1,x_2) = a_0 + A_1 x_1 + A_2 x_2$, diffusion coefficient $\sigma_d(x_1,x_2) = (b_k+B_k x_2)_{k=1,...,d}$ with $k$-th column $b_k+B_k x_2$, and jump coefficient $\eta_d(x_1,x_2,z) := \sum_{k=1}^d C_k x_1 z_k$. 

We now verify that the assumptions of Setting~\ref{c01} are satisfied. Indeed, since $\Vert A_k \Vert \leq \frac{1}{4}$ and $\Vert B_k \Vert \leq \frac{1}{4d^{1/2}}$, we observe for every $x_1,x_2,y_1,y_2 \in \R^d$ that
\begin{equation}
	\label{eq:ou:cond1}
	\begin{aligned}
		& \max\lbrace \Vert \mu_d(x_1,x_2) - \mu_d(y_1,y_2) \Vert, \Vert \sigma_d(x_1,x_2) - \sigma_d(y_1,y_2) \Vert \rbrace \\
		& \leq \max\lbrace \Vert A_1 (x_1 - y_1) + A_2 (x_2 - y_2) \Vert, \Vert (B_k (x_2 - y_2))_{k=1,...,d} \Vert \rbrace \\
		& \leq \max\left\lbrace \Vert A_1 \Vert, \Vert A_2 \Vert, \left( \Vert B_1 \Vert^2 + ... + \Vert B_d \Vert^2 \right)^\frac{1}{2} \right\rbrace \left( \Vert x_1 - y_1 \Vert + \Vert x_2 - y_2 \Vert \right) \\
		& \leq \frac{1}{4} \left( \Vert x_1 - y_1 \Vert + \Vert x_2 - y_2 \Vert \right).
	\end{aligned}
\end{equation}
In addition, by using that $\big\Vert \sum_{k=1}^d C_k v z_k \big\Vert^2 = \sum_{i=1}^d \big( \sum_{k=1}^d (C_k v)_i z_k \big)^2 = \sum_{k,l=1}^d v^\top C_k^\top C_l v z_k z_l$, that $\int_{\R^d} z_k z_l \, \nu^d(dz) = \lambda \1_{\lbrace k \rbrace}(l)$, and that $\Vert C_k \Vert \leq \frac{1}{4d^{1/2}}$, it holds for every $x_1,x_2,y_1,y_2 \in \R^d$ that
\begin{equation}
	\label{eq:ou:cond2}
	\begin{aligned}
		\int_{\R^d} \Vert \eta_d(x_1,x_2,z) - \eta_d(y_1,y_2,z) \Vert^2 \, \nu^d(dz) & = \int_{\R^d} \left\Vert \sum_{k=1}^d C_k (x_1-y_1) z_k \right\Vert^2 \nu^d(dz) \\
		& = \sum_{k,l=1}^d (x_1-y_1)^\top C_k^\top C_l (x_1-y_1) \int_{\R^d} z_k z_l \, \nu^d(dz) \\
		& \leq \lambda \sum_{k=1}^d \Vert C_k \Vert^2 \Vert x_1-y_1 \Vert^2 \leq \frac{\lambda}{16} \Vert x_1 - y_1 \Vert^2.
	\end{aligned}
\end{equation}
Furthermore, by assuming that $\xi_d := \xi_1 \, (1,...,1)^\top \in \R^d$ and using that $\Vert a_0 \Vert \leq \frac{1}{2}$ as well as $\Vert (b_k)_{k=1,...,d} \Vert \leq \frac{1}{2}$, we conclude that
\begin{equation}
	\label{eq:ou:cond3}
	\begin{aligned}
		\Vert \xi_d \Vert + \Vert \mu_d(0,0) \Vert + \Vert \sigma_d(0,0) \Vert + \left( \int_{\R^d} \Vert \eta_d(0,0,z) \Vert^2 \, \nu^d(dz) \right)^\frac{1}{2} & \leq \Vert \xi_d \Vert + \Vert a_0 \Vert + \Vert (b_k)_{k=1,...,d} \Vert \\
		& \leq \vert \xi_1 \vert d^\frac{1}{2} + 1 \leq \left( \vert \xi_1 \vert + 1 \right) d^\frac{1}{2}.
	\end{aligned}
\end{equation}
Hence, \eqref{eq:ou:cond1}, \eqref{eq:ou:cond2}, and \eqref{eq:ou:cond3} show that \eqref{e11}, \eqref{e11b}, and \eqref{e12} are satisfied with $c := \max(1,\frac{\lambda}{16},\vert\xi_1\vert+1)$. In particular, Theorem~\ref{p01} and Theorem~\ref{p02} hold for the mean-field Ornstein-Uhlenbeck model~\eqref{eq:ou}.

For different dimensions $d \in \lbrace 10, 25, 50, 100, 200 \rbrace$ and levels $n = m \in \lbrace 1,...,5 \rbrace$ with $K = m^n$, we run Algorithm~\ref{alg:mlp} independently $10$ times with terminal time $T = 1$ and initial value $\xi_d = (20,...,20)^\top$. The results are reported in Table~\ref{tab:ou} with ($d^{-1/2}$-adjusted) $L^2$-error given by
\begin{equation}
	\label{eq:L2_err}
	\left( \frac{1}{10 d K} \sum_{r=1}^{10} \sum_{k=1}^K \left\Vert X^{d,\theta}_\infty(\omega_r)(t_k) - X^{d,\theta}_{n,m,K}(\omega_r)(t_k) \right\Vert^2 \right)^\frac{1}{2},
\end{equation}
where the true solution $X^{d,\theta}_\infty = (X^{d,\theta}_\infty(t))_{t \in [0,T]}^\top$ is approximated by an Euler-Maruyama scheme of \eqref{eq:ou} over a finer equidistant time grid $(j T/K_1)_{j=0,...,K_1}$ with $K_1 := K \lfloor \frac{500}{K} \rfloor + 1$ points. The realizations of the Brownian increments $\big(\Delta W^{d,\theta}(j T/K_1)\big)_{j=0,...,K_1}$ and the Poisson increments $(\Delta N^{d,\theta}(j T/K_1))_{j=0,...,K_1}$ on this finer Euler-Maruyama grid are then summed over each time subinterval of the coarser MLP grid to obtain the Brownian increments $\big(\Delta W^{d,\theta}(j T/K)\big)_{j=1,...,K}$ and the Poisson increments $(\Delta N^{d,\theta}(j T/K))_{j=1,...,K}$, which are then used for the MLP algorithm.

\begin{table}[ht!]
	\begin{small}
		\begin{tabular}{rl|R{2.3cm}R{2.3cm}R{2.3cm}R{2.3cm}R{2.3cm}|}
			& & \multicolumn{5}{c|}{Level $n$, samples per level $m$, and time points $K = m^n$} \\
			$d$ & & $n = m = 1$ & $n = m = 2$ & $n = m = 3$ & $n = m = 4$ & $n = m = 5$ \\
			\hline
			10 & $L^2$-Error  & 1.6259 & 0.5160 & 0.0925 & 0.0147 & 0.0049 \\ 
 & Time  & $0.0009$  & $0.0107$  & $0.5596$  & $99.2031$  & $30674.6427$  \\ 
 & Cost  & $1.71 \cdot 10^{4}$  & $7.46 \cdot 10^{5}$  & $7.22 \cdot 10^{7}$  & $1.31 \cdot 10^{10}$  & $3.92 \cdot 10^{12}$  \\ 
\hline 
25 & $L^2$-Error  & 1.3444 & 0.3210 & 0.0609 & 0.0091 & 0.0031 \\ 
 & Time  & $0.0008$  & $0.0090$  & $0.7281$  & $127.3445$  & $39515.7786$  \\ 
 & Cost  & $2.19 \cdot 10^{5}$  & $1.00 \cdot 10^{7}$  & $9.85 \cdot 10^{8}$  & $1.79 \cdot 10^{11}$  & $5.36 \cdot 10^{13}$  \\ 
\hline 
50 & $L^2$-Error  & 0.8265 & 0.2290 & 0.0395 & 0.0056 & 0.0027 \\ 
 & Time  & $0.0040$  & $0.0187$  & $1.0943$  & $172.2256$  & $52763.0796$  \\ 
 & Cost  & $1.62 \cdot 10^{6}$  & $7.60 \cdot 10^{7}$  & $7.52 \cdot 10^{9}$  & $1.37 \cdot 10^{12}$  & $4.10 \cdot 10^{14}$  \\ 
\hline 
100 & $L^2$-Error  & 0.5533 & 0.1661 & 0.0282 & 0.0038 & 0.0026 \\ 
 & Time  & $0.0023$  & $0.0779$  & $6.4294$  & $948.9596$  & $221126.8970$  \\ 
 & Cost  & $1.25 \cdot 10^{7}$  & $5.92 \cdot 10^{8}$  & $5.87 \cdot 10^{10}$  & $1.07 \cdot 10^{13}$  & $3.20 \cdot 10^{15}$  \\ 
\hline 
200 & $L^2$-Error  & 0.4160 & 0.1164 & 0.0208 & 0.0026 & 0.0025 \\ 
 & Time  & $0.0054$  & $0.1465$  & $10.4943$  & $2216.1338$  & $913409.7277$  \\ 
 & Cost  & $9.79 \cdot 10^{7}$  & $4.67 \cdot 10^{9}$  & $4.64 \cdot 10^{11}$  & $8.47 \cdot 10^{13}$  & $2.53 \cdot 10^{16}$  \\ 
\hline 

		\end{tabular}
		\caption{MLP solution of the mean-field Ornstein-Uhlenbeck model \eqref{eq:ou} for different $d \in \lbrace 10, 25, 50, 100, 200 \rbrace$, $n = m \in \lbrace 1,...,5 \rbrace$, and $10$ independent runs of the algorithm. While the $L^2$-error \eqref{eq:L2_err} is displayed in the rows ``$L^2$-Error'', the rows ``Time'' and ``Cost'' report the computational times (in seconds) and the computational costs $\mathsf{C}_{n,m,K}^d$, respectively, which are both averaged over the $10$ runs.}
		\label{tab:ou}
	\end{small}
\end{table}

\subsection{Infinite jump-activity: Mean-field Variance-Gamma model}

In the second example, we consider a Variance-Gamma model with mean-field term in the drift and jump coefficient (see \cite{Madan1990,Madan1998,Buchmann2017} without mean-field term). More precisely, we assume that the $(\F_t)_{t \in [0,T]}$-adapted process $X^{d,\theta}_\infty = (X^{d,\theta}_\infty(t))_{t \in [0,T]}$ for every $t \in [0,T]$ is given by
\begin{equation}
	\label{eq:vg}
	\begin{aligned}
		X^{d,\theta}_\infty(t) & = \xi_d + \int_0^t \left( a_0 + A_1 X^{d,\theta}_\infty(s-) + A_2 \E\!\left[ X^{d,\theta}_\infty(s-) \right] \right) \, ds \\
		& \quad\quad + \sum_{k=1}^d \int_0^t \int_{\R^d} C_k \, \E\!\left[ X^{d,\theta}_\infty(s-) \right] z_k \, \tilde{N}^{d,\theta}(dz,ds), 
	\end{aligned}
\end{equation}
where $\xi_d \in \R^d$, where $\nu^d(dz) = \frac{(2\alpha)^{d/4+1}}{(2\pi)^{d/2} \kappa^{d/4} \Vert z \Vert^{d/2}} K_{d/2}\big( \sqrt{2\alpha/\kappa} \Vert z \Vert \big) \, dz$ is the infinite L\'evy measure (with $\alpha = 1.0$, $\kappa = 0.1$, and $K_{d/2}$ denoting the modified Bessel function of the second kind with parameter $d/2$), and where the vector $a_0 := (a_{0,i})_{i=1,...,d}^\top \in \R^d$ and the matrices $A_k := (A_{k,i,j})_{i,j=1,...,d}$, $k \in \lbrace 1,2 \rbrace$, $C_k := (C_{k,i,j})_{i,j=1,...,d} \in \R^{d \times d}$, $k \in \lbrace 1,...,d \rbrace$ are randomly initialized, i.e.,
\begin{align}
	a_{0,i} & \sim \mathcal{U}\!\left( -\tfrac{1}{d^{1/2}},\tfrac{1}{d^{1/2}} \right), & & i \in \lbrace 1,...,d \rbrace, \quad & A_{k,i,j} & \sim \mathcal{U}\!\left( -\tfrac{1}{d}, \tfrac{1}{d} \right), & & k \in \lbrace 1,2 \rbrace, \, i,j \in \lbrace 1,...,d \rbrace, \\
	& & & & C_{k,i,j} & \sim \mathcal{U}\!\left( -\tfrac{1}{d^{3/2}}, \tfrac{1}{d^{3/2}} \right), & & k,i,j \in \lbrace 1,...,d \rbrace,
\end{align}
which are all independent of each other. Moreover, by taking the expectation in $\eqref{eq:vg}$ and using Fubini's theorem, we have
\begin{equation}
	\E\!\left[ X^{d,\theta}_\infty(t) \right] = \xi_d + a_0 t + (A_1 + A_2) \int_0^t \E\!\left[ X^{d,\theta}_\infty(s-) \right] ds,
\end{equation}
whose solution is given by
\begin{equation}
	m_d(t) := \E\!\left[ X^{d,\theta}_\infty(t) \right] = e^{(A_1+A_2)t} \xi_d + \int_0^t e^{(A_1+A_2)(t-s)} a_0 ds.
\end{equation}
Comparing to Setting~\ref{s01}, this corresponds to the drift coefficient $\mu_d(x_1,x_2) = a_0 + A_1 x_1 + A_2 x_2$, diffusion coefficient $\sigma_d(x_1,x_2) = 0$, and jump coefficient $\eta_d(x_1,x_2,z) := \sum_{k=1}^d C_k x_2 z_k$. 

We now verify that the assumptions of Setting~\ref{c01} are satisfied. Indeed, since $\Vert A_1 \Vert \leq 1$ and $\Vert A_2 \Vert \leq 1$ we observe for every $x_1,x_2,y_1,y_2 \in \R^d$ that
\begin{equation}
	\label{eq:vg:cond1}
	\begin{aligned}
		& \max\lbrace \Vert \mu_d(x_1,x_2) - \mu_d(y_1,y_2) \Vert, \Vert \sigma_d(x_1,x_2) - \sigma_d(y_1,y_2) \Vert \rbrace \leq \Vert A_1 (x_1 - y_1) + A_2 (x_2 - y_2) \Vert \\
		& \leq \max\lbrace \Vert A_1 \Vert, \Vert A_2 \Vert \rbrace \left( \Vert x_1 - y_1 \Vert + \Vert x_2 - y_2 \Vert \right) \leq \left( \Vert x_1 - y_1 \Vert + \Vert x_2 - y_2 \Vert \right).
	\end{aligned}
\end{equation}
In addition, by employing the identity $\big\Vert \sum_{k=1}^d C_k v z_k \big\Vert^2 = \sum_{i=1}^d \big( \sum_{k=1}^d (C_k v)_i z_k \big)^2 = \sum_{k,l=1}^d v^\top C_k^\top C_l v z_k z_l$, that $\int_{\R^d} z_k z_l \, \nu^d(dz) = \frac{\1_{\lbrace k \rbrace}(l)}{d} \int_{\R^d} \Vert z \Vert^2 \, \nu^d(dz)$ with $\int_{\R^d} \Vert z \Vert^2 \nu^d(dz) = \frac{(2\alpha)^{d/4+1}}{2^{d/2-1} \kappa^{d/4} \Gamma(d/2)} \int_0^\infty r^{2-d/2} K_{d/2}\big( \sqrt{2\alpha/\kappa} r \big) r^{d-1} dr = \frac{\kappa}{2^{d/2-1} \Gamma(d/2)} \int_0^\infty s^{1+d/2} K_{d/2}(s) ds = 2\kappa \frac{\Gamma(d/2+1)}{\Gamma(d/2)} = 2\kappa \frac{d}{2} = \kappa d$ (using polar coordinates, substitution $s \mapsto \sqrt{2\alpha/\kappa} r$, and \cite[Equation~10.43.22]{CLMF26}), and that $\Vert C_k \Vert \leq \frac{1}{d^{1/2}}$, it holds for every $x_1,x_2,y_1,y_2 \in \R^d$ that
\begin{equation}
	\label{eq:vg:cond2}
	\begin{aligned}
		\int_{\R^d} \Vert \eta_d(x_1,x_2,z) - \eta_d(y_1,y_2,z) \Vert^2 \nu^d(dz) & = \int_{\R^d} \left\Vert \sum_{k=1}^d C_k (x_2-y_2) z_k \right\Vert^2 \nu^d(dz) \\
		& = \sum_{k,l=1}^d (x_2-y_2)^\top C_k^\top C_l (x_2-y_2) \int_{\R^d} z_k z_l \nu^d(dz) \\
		& \leq \kappa \sum_{k=1}^d \Vert C_k \Vert^2 \Vert x_2 - y_2 \Vert^2 \leq \kappa \Vert x_2 - y_2 \Vert^2.
	\end{aligned}
\end{equation}
Furthermore, by assuming that $\xi_d := \xi_1 \, (1,...,1)^\top \in \R^d$ and using that $\Vert a_0 \Vert \leq 1$, we have
\begin{equation}
	\label{eq:vg:cond3}
	\begin{aligned}
		\Vert \xi_d \Vert + \Vert \mu_d(0,0) \Vert + \Vert \sigma_d(0,0) \Vert + \left( \int_{\R^d} \Vert \eta_d(0,0,z) \Vert^2 \nu^d(dz) \right)^\frac{1}{2} & \leq \Vert \xi_d \Vert + \Vert a_0 \Vert \leq \vert \xi_1 \vert d^\frac{1}{2} + 1 \\
		& \leq \left( \vert \xi_1 \vert + 1 \right) d^\frac{1}{2}.
	\end{aligned}
\end{equation}
Moreover, by using the identity $\big\Vert \sum_{k=1}^d C_k v z_k \big\Vert^2 = \sum_{k,l=1}^d v^\top C_k^\top C_l v z_k z_l$, that $\int_{\R^d \setminus A^d_\delta} z_k z_l \, \nu^d(dz) = \frac{\1_{\lbrace k \rbrace}(l)}{d} \int_{\R^d \setminus A^d_\delta} \Vert z \Vert^2 \nu^d(dz)$, that $\int_{\R^d\setminus A^d_\delta} \Vert z \Vert^2 \nu^d(dz) = \frac{(2\alpha)^{d/4+1}}{2^{d/2-1} \kappa^{d/4} \Gamma(d/2)} \int_0^\delta \! r^{2-d/2} K_{d/2}\big( \sqrt{2\alpha/\kappa} r \big) r^{d-1} dr \leq \delta \frac{\sqrt{2\alpha\kappa}}{2^{d/2-1} \Gamma(d/2)} \int_0^\infty s^{d/2} K_{d/2}(s) ds = \delta \sqrt{2\pi\alpha\kappa} \frac{\Gamma((d+1)/2)}{\Gamma(d/2)} \leq c_{\alpha,\kappa} \delta d^{1/2}$ for some constant $c_{\alpha,\kappa} > 0$ (only depending on $\alpha,\kappa > 0$), and that $\Vert C_k \Vert \leq \frac{1}{d^{1/2}}$, it follows for every $\delta \in (0,\infty)$ and $x,y \in \R^d$ that
\begin{equation}
	\label{eq:vg:cond4}
	\begin{aligned}
		\int_{\R^d\setminus A^d_\delta} \Vert \eta_d(x,y,z) \Vert^2\,\nu^d(dz) & = \int_{\R^d\setminus A^d_\delta} \left\Vert \sum_{k=1}^d C_k y z_k \right\Vert^2 \nu^d(dz) = \sum_{k,l=1}^d y^\top C_k^\top C_l y \int_{\R^d \setminus A^d_\delta} z_k z_l \nu^d(dz) \\
		& \leq \frac{1}{d} c_{\alpha,\kappa} \delta d^\frac{1}{2} \sum_{k=1}^d \Vert C_k \Vert^2 \Vert y \Vert^2 \leq c_{\alpha,\kappa} \delta \Vert y \Vert^2.
	\end{aligned}
\end{equation}
In addition, by using that $1 \wedge \Vert z \Vert^2 \leq \Vert z \Vert^2$ and $\int_{\R^d} \Vert z \Vert^2 \, \nu^d(dz) = \kappa d$ (as argued above), we obtain
\begin{equation}
	\label{eq:vg:cond5}
	\int_{\R^d} (1 \wedge \Vert z \Vert^2) \, \nu^d(dz) \leq \int_{\R^d} \Vert z \Vert^2 \, \nu^d(dz) = \kappa d.
\end{equation}
Hence, \eqref{eq:vg:cond1}, \eqref{eq:vg:cond2}, \eqref{eq:vg:cond3}, \eqref{eq:vg:cond4}, and \eqref{eq:vg:cond5} show that \eqref{e11}, \eqref{e11b}, \eqref{e12}, \eqref{e13}, and \eqref{s13b} are satisfied with $c := \max(1,\kappa,\vert \xi_1 \vert + 1,c_{\alpha,\kappa})$ and $q := 1$. In particular, Theorem~\ref{p01b} and Theorem~\ref{p02b} hold for the mean-field Variance-Gamma model~\eqref{eq:vg}.

For different dimensions $d \in \lbrace 10, 25, 50, 100, 200 \rbrace$ and levels $n = m \in \lbrace 1,...,5 \rbrace$ with $K = m^n$, we run Algorithm~\ref{alg:mlp} independently $10$ times with terminal time $T = 1$, initial value $\xi_d = (10,...,10)^\top$, and $\delta = 0.02$. The results are reported in Table~\ref{tab:vg}, with the true solution $X^{d,\theta}_\infty = (X^{d,\theta}_\infty(t))_{t \in [0,T]}^\top$ approximated by an Euler-Maruyama scheme of \eqref{eq:vg} over a finer equidistant time grid $(j T/K_1)_{j=0,...,K_1}$ with $K_1 := K \lfloor \frac{500}{K} \rfloor + 1$ points and a lower small-jump truncation threshold $\delta_1 := 0.001$, where the Poisson increments as well as the jump sizes are aggregated as in Section~\ref{subsec:ou} to use it for the MLP algorithm.

\begin{table}[ht!]
	\begin{small}
		\begin{tabular}{rl|R{2.3cm}R{2.3cm}R{2.3cm}R{2.3cm}R{2.3cm}|}
			& & \multicolumn{5}{c|}{Level $n$, samples per level $m$, and time points $K = m^n$} \\
			$d$ & & $n = m = 1$ & $n = m = 2$ & $n = m = 3$ & $n = m = 4$ & $n = m = 5$ \\
			\hline
			10 & $L^2$-Error  & 1.9218 & 0.2534 & 0.0757 & 0.0250 & 0.0118 \\ 
 & Time  & $0.0011$  & $0.0111$  & $0.5800$  & $90.0110$  & $31499.5689$  \\ 
 & Cost  & $1.52 \cdot 10^{4}$  & $6.23 \cdot 10^{5}$  & $5.91 \cdot 10^{7}$  & $1.07 \cdot 10^{10}$  & $3.20 \cdot 10^{12}$  \\ 
\hline 
25 & $L^2$-Error  & 1.4106 & 0.1219 & 0.0309 & 0.0123 & 0.0047 \\ 
 & Time  & $0.0018$  & $0.0112$  & $0.7286$  & $119.4330$  & $37138.5412$  \\ 
 & Cost  & $1.73 \cdot 10^{5}$  & $7.59 \cdot 10^{6}$  & $7.36 \cdot 10^{8}$  & $1.34 \cdot 10^{11}$  & $3.99 \cdot 10^{13}$  \\ 
\hline 
50 & $L^2$-Error  & 0.8396 & 0.0704 & 0.0182 & 0.0057 & 0.0024 \\ 
 & Time  & $0.0011$  & $0.0114$  & $0.7622$  & $125.4114$  & $45642.8699$  \\ 
 & Cost  & $1.20 \cdot 10^{6}$  & $5.44 \cdot 10^{7}$  & $5.33 \cdot 10^{9}$  & $9.70 \cdot 10^{11}$  & $2.90 \cdot 10^{14}$  \\ 
\hline 
100 & $L^2$-Error  & 0.5714 & 0.0449 & 0.0101 & 0.0028 & 0.0016 \\ 
 & Time  & $0.0046$  & $0.0233$  & $1.5465$  & $263.2675$  & $77193.3681$  \\ 
 & Cost  & $8.84 \cdot 10^{6}$  & $4.11 \cdot 10^{8}$  & $4.06 \cdot 10^{10}$  & $7.40 \cdot 10^{12}$  & $2.21 \cdot 10^{15}$  \\ 
\hline 
200 & $L^2$-Error  & 0.3944 & 0.0302 & 0.0062 & 0.0019 & 0.0011 \\ 
 & Time  & $0.0040$  & $0.0623$  & $3.6263$  & $622.8335$  & $222600.4870$  \\ 
 & Cost  & $6.78 \cdot 10^{7}$  & $3.19 \cdot 10^{9}$  & $3.16 \cdot 10^{11}$  & $5.77 \cdot 10^{13}$  & $1.72 \cdot 10^{16}$  \\ 
\hline 

		\end{tabular}
		\caption{MLP solution of the mean-field Variance-Gamma model~\eqref{eq:vg} for different $d \in \lbrace 10, 25, 50, 100, 200 \rbrace$, $n = m \in \lbrace 1,...,5 \rbrace$, and $10$ independent runs of the algorithm. While the $L^2$-error \eqref{eq:L2_err} is displayed in the rows ``$L^2$-Error'', the rows ``Time'' and ``Cost'' report the computational times (in seconds) and the computational costs $\mathsf{C}_{n,m,K,\delta}^d$, respectively, which are both averaged over the $10$ runs.}
		\label{tab:vg}
	\end{small}
\end{table}

\section{Proof of the main results}
\label{sec:proofs}In the whole section we assume Setting \ref{s01c}.
Also recall that we consider two cases:
\begin{enumerate}[(i)]
\item The case of finite jump-activity, i.e., for all $d\in \N$ we have that 
$\nu^d(\R^d)\in (0,\infty)$.
\item The case of infinite jump-activity, i.e., for all $d\in \N$ we have that 
$\nu^d(\R^d)=\infty$. In this case we assume Setting~\ref{s01}.
\end{enumerate}
In the whole section let $\Delta$ be the set $\{0\}$ in the case of finite jump-activity and be the interval $(0,\infty)$ in the case of infinite jump-activity. Furthermore, for every $d,m,K\in \N$,
$n\in \N_0$, $\delta\in \Delta$ let $\mathcal{G}^d_{n,m,K,\delta}$ be the sigma-algebra generated by 
$W^{d,0}$,
$N^{d,0}$, $(X_{\ell,m,K,\delta}^{d,0})_{\ell\in [0,n]\cap\Z}$.

An important ingredient of the proof is the truncated Euler-Maruyama approximations $X^d_{K,\delta}$, see Lemma~\ref{a14}. We separate the error into the MLP error and 
the time-discretization error in the case of finite jump-activity (see the triangle inequality in~\eqref{k19}) and into the MLP error, the truncation error, and the time-discretization error  in the case of infinite jump-activity (see the triangle inequality in~\eqref{k18}). The MLP error is separated into the statistical error and the bias (see the triangle inequality in~\eqref{k17}).

%%%%Dis. error
%\subsection{The discretization error}
%We will next establish the discretization error of the approximations. First of all we need time regularity of the exact solution.

\subsection{Some simple estimates}
We first state and prove some simple estimates which are often used in the proof of the main theorems.

\begin{lemma}[Some simple estimates]\label{a05f}
Assume Setting~\ref{c01} and let $d\in \N$. Then the following items are true.
\begin{enumerate}[(i)]
\item\label{a05a} For all $x\in \R^d$, $X\in L^2(\P;\R^d)$ we have that $\mu_d(x,X)\in L^2(\P;\R^d)$ and 
$\sigma_d(x,X)\in L^2(\P;\R^{d\times d})$.
\item \label{a05b}
For  all $x\in \R^d$,
$X\in L^2(\P;\R^d)$
 we have that
$\eta_d (x,X,\cdot)\in L^2(\P\otimes \nu^d ;\R^d)$.
\item \label{a05c}
For all  $X\in L^2(\P;\R^d)$ we have that
\begin{align}
\max\{
\lVert
\E [\mu_d(x,X)]|_{x=X}\rVert_{L^2(\P;\R^d)} ,\lVert\E [\sigma_d(x,X)]|_{x=X} 
\rVert_{L^2(\P;\R^{d\times d})}
\}
\leq 
cd^c+ 2c
 \lVert X\rVert_{L^2(\P;\R^d)}.
\end{align}
\item \label{a05e}For all $X\in L^2(\P;\R^d)$ we have
\begin{align}
\left(
\int_{\R^d}\left\lVert
\E \!\left[\eta_d(x,X,z)\right]\!|_{x=X}
\right\rVert_{L^2(\P;\R^d)}^2\nu^d(dz)\right)^\frac{1}{2}
\leq cd^c+ \sqrt{2c} \lVert X\rVert_{L^2(\P;\R^d)}.
\end{align}
\item \label{f01} For all
 $X,Y\in L^2(\P;\R^d)$ we have that
\begin{align}
&
\max\Bigl\{\left\lVert
\E \!\left[\mu_d(x,X)\right]\!|_{x=X}
-
\E \!\left[\mu_d(y,Y)\right]\!|_{y=Y}
 \right\rVert_{L^2(\P;\R^d)},\nonumber\\
&\qquad\qquad
\left\lVert
\E \!\left[\sigma_d(x,X)\right]\!|_{x=X}
-
\E \!\left[\sigma_d(y,Y)\right]\!|_{y=Y}
 \right\rVert_{L^2(\P;\R^{d\times d})}\Bigr
\}\nonumber\\
&\leq 2c \lVert X -Y\rVert_{L^2(\P;\R^d)}.
\end{align}

\item \label{f02} For all $X,Y\in L^2(\P;\R^d)$ we have that 
\begin{align}
&
\int_{\R^d}\left\lVert
\E \!\left[\eta_d(x,X,z)\right]\!|_{x=X}
-
\E \!\left[\eta_d(x,Y,z)\right]\!|_{x=Y}
\right\rVert_{L^2(\P;\R^d)}^2
\nu^d(dz)
\leq 2c\left\lVert
X-Y
\right\rVert_{L^2(\P;\R^d)}^2.\label{a12d}
\end{align}
\end{enumerate}
\end{lemma}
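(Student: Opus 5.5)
The proof rests on three ingredients: the growth bounds obtained from \eqref{e11}, \eqref{e11b}, and \eqref{e12}; an independent-copy representation of the ``frozen'' expectation; and Jensen's inequality.

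\emph{Growth bounds.} From \eqref{e11} and \eqref{e12}, for all $x,y\in\R^d$,
\begin{align}
\lVert\mu_d(x,y)\rVert\leq \lVert\mu_d(0,0)\rVert+c\lVert x\rVert+c\lVert y\rVert\leq cd^c+c\lVert x\rVert+c\lVert y\rVert .
\end{align}
The same bound holds for $\sigma_d$. Similarly, by the Minkowski inequality in $L^2(\nu^d)$, \eqref{e11b}, and \eqref{e12},
\begin{align}
\Bigl(\int\lVert\eta_d(x,y,z)\rVert^2\nu^d(dz)\Bigr)^{1/2}\leq cd^c+\sqrt{c}\lVert x\rVert+\sqrt c\lVert y\rVert .
\end{align}
Items \eqref{a05a} and \eqref{a05b} follow at once from these bounds, measurability being ensured by continuity of the coefficients. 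For \eqref{a05b} we take $L^2$ norms in $\omega$ and use Fubini.

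\emph{Representation of the frozen expectation.} Let $\tilde X$ (resp.\ $\tilde Y$) be an independent copy of $X$ (resp.\ of $(X,Y)$ jointly), defined on a product extension of the probability space. Then
\begin{align}
\E[\mu_d(x,X)]|_{x=X}=\E[\mu_d(X,\tilde X)\,|\,X]
\end{align}
almost surely, by the freezing lemma. By conditional Jensen,
\begin{align}
\lVert\E[\mu_d(x,X)]|_{x=X}\rVert_{L^2}\leq\lVert\mu_d(X,\tilde X)\rVert_{L^2}\leq cd^c+c\lVert X\rVert_{L^2}+c\lVert\tilde X\rVert_{L^2}=cd^c+2c\lVert X\rVert_{L^2}.
\end{align}
This gives \eqref{a05c}, and $\sigma_d$ is treated identically.

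\emph{Items \eqref{f01}, \eqref{a05e}, \eqref{f02}.} For \eqref{f01}, take $(\tilde X,\tilde Y)$ an independent copy of $(X,Y)$ and write the difference as $\E[\mu_d(X,\tilde X)-\mu_d(Y,\tilde Y)\,|\,X,Y]$. Jensen and \eqref{e11} then give the bound $c\lVert X-Y\rVert+c\lVert\tilde X-\tilde Y\rVert=2c\lVert X-Y\rVert_{L^2}$.

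For \eqref{a05e} and \eqref{f02}, apply pointwise in $z$ the conditional Jensen inequality $\lVert\E[\cdot\,|\,X]\rVert^2\leq\E[\lVert\cdot\rVert^2\,|\,X]$. Integrating over $\P\otimes\nu^d$ with Fubini reduces everything to
\begin{align}
\E\int\lVert\eta_d(X,\tilde X,z)-\eta_d(Y,\tilde Y,z)\rVert^2\nu^d(dz)\leq c\lVert X-Y\rVert_{L^2}^2+c\lVert\tilde X-\tilde Y\rVert_{L^2}^2=2c\lVert X-Y\rVert_{L^2}^2
\end{align}
by \eqref{e11b}. Note that the statement of \eqref{f02} as written evaluates both terms with first argument frozen at the respective variable, so this is the right reduction.

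For \eqref{a05e}, use Minkowski in $L^2(\P\otimes\nu^d)$ to split off $\eta_d(0,0,z)$. The constant part contributes at most $cd^c$ by \eqref{e12}, and the difference part is bounded, as above, by
\begin{align}
\bigl(c\lVert X\rVert^2+c\lVert\tilde X\rVert^2\bigr)^{1/2}=\sqrt{2c}\lVert X\rVert_{L^2}.
\end{align}

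The only delicate point is the rigorous justification of the freezing identity together with joint measurability in $(\omega,z)$ for the $\eta_d$ terms, which is needed for Fubini. I expect this to be routine: the integrands are continuous in the state variables and Borel in $z$, and integrability follows from the growth bounds. Alternatively, one can avoid conditional expectations altogether by writing $\E[f(x,X)]|_{x=X}=\int f(X,y)\,\lambda_X(dy)$ and applying Jensen or Minkowski with respect to the probability measure $\lambda_X$ directly.
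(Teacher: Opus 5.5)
Your proposal is correct and follows essentially the paper's route: growth bounds from \eqref{e11}, \eqref{e11b}, \eqref{e12}, followed by Jensen's inequality, Tonelli's theorem, and the Lipschitz conditions applied to the frozen expectation. The only difference is presentational. The paper writes the frozen expectations directly as integrals against $\P_X\otimes\P_X$ (resp.\ $\P_{(X,Y)}\otimes\P_{(X,Y)}$), which is exactly the alternative you mention at the end and which avoids enlarging the probability space to carry an independent copy.
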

\begin{proof}[Proof of Lemma~\ref{a05f}]
First,
\eqref{e11}, the triangle inequality, and \eqref{e12} show
for all 
$d\in \N$,
$x\in \R^d$, $X\in L^2(\P;\R^d)$ that
\begin{align} 
&
\max \!\left\{
\lVert \mu_d (x,X) \rVert_{L^2(\P;\R^d)}
,
\lVert \sigma_d (x,X) \rVert_{L^2(\P;\R^{d\times d})}
\right\}\nonumber\\&\leq \max \{\lVert \mu_d(0,0)\rVert, \lVert\sigma_d(0,0)\rVert\}+ c\lVert x\rVert+ c \lVert X\rVert_{L^2(\P;\R^d)}<\infty.
\label{a02b}
\end{align} This shows \eqref{a05a}.

Next, \eqref{e11b} and the triangle inequality show for all
 $x_1,x_2\in \R^d$
that
\begin{align}
&\left(\int_{\R^d} \left\lVert\eta_d(x_1,x_2,z)\right\rVert^2\nu^d(dz)
\right)^\frac{1}{2}\nonumber\\
&
\leq \left(
\int_{\R^d} \left\lVert\eta_d(x_1,x_2,z)-\eta_d(0,0,z)\right\rVert^2\nu^d(dz)
\right)^\frac{1}{2}
+\left(
\int_{\R^d} \left\lVert\eta_d(0,0,z)\right\rVert^2\nu^d(dz)
\right)^\frac{1}{2}\nonumber\\
&
\leq \left(c \lVert x_1\rVert^2 + c \lVert x_2\rVert^2\right)^\frac{1}{2}
+\left(
\int_{\R^d} \left\lVert\eta_d(0,0,z)\right\rVert^2\nu^d(dz)
\right)^\frac{1}{2}.\label{s02a}
\end{align}
This, Tonelli's theorem,  the triangle inequality, and \eqref{e12} imply for all 
$x\in \R^d$, $X\in L^2(\P;\R^d)$ that
\begin{align}
\left(
\int_{\R^d}\E \!\left[\lVert\eta_d(x,X,z)\rVert^2
\right] \nu^d (dz)\right)^\frac{1}{2}&\leq
\left(\E \!\left[
\int_{\R^d}\lVert\eta_d(x,X,z)\rVert^2\, \nu^d (dz)\right]\right)^\frac{1}{2}\nonumber\\
&
\leq 
\left(\E \!\left[
c\lVert x\rVert^2+c\lVert X\rVert^2
\right]\right)^\frac{1}{2}+\left(
\int_{\R^d} \left\lVert\eta_d(0,0,z)\right\rVert^2\nu^d(dz)
\right)^\frac{1}{2}<\infty.
\end{align}
This shows \eqref{a05b}.

Next, Jensen's inequality and \eqref{a02b} show for all  $X\in L^2(\P;\R^d)$ that
\begin{align}
&\max\{
\lVert
\E [\mu_d(x,X)]|_{x=X}\rVert_{L^2(\P;\R^d)} ,\lVert\E [\sigma_d(x,X)]|_{x=X} 
\rVert_{L^2(\P;\R^{d\times d})}
\}\nonumber\\
&\leq
\max\biggl\{
\left\lVert
\lVert\mu_d(x,X)\rVert_{L^2(\P;\R^d)}|_{x=X}\right\rVert_{L^2(\P;\R^d)}, \left\lVert\lVert\sigma_d(x,X)\rVert_{{L^2(\P;\R^{d\times d})}}|_{x=X} \right
\rVert_{L^2(\P;\R^{d\times d})}
\biggr\}\nonumber\\
&\leq 
 \max \{\lVert \mu_d(0,0)\rVert, \lVert\sigma_d(0,0)\rVert\}+
\left\lVert
\left[c\lVert x\rVert+ c\lVert X\rVert_{L^2(\P;\R^d)}\right]\!|_{x=X}
\right\rVert_{L^2(\P;\R^d)}\nonumber\\
&\leq 
 \max \{\lVert \mu_d(0,0)\rVert, \lVert\sigma_d(0,0)\rVert\}+ 2c
 \lVert X\rVert_{L^2(\P;\R^d)}.
\end{align} This and \eqref{e12} show \eqref{a05c}.

Next,
Jensen's inequality, Tonelli's theorem, the triangle inequality, and \eqref{s02a} demonstrate for all
$X\in L^2(\P;\R^d)$ that
\begin{align}
&\left(
\int_{\R^d}\left\lVert
\E \!\left[\eta_d(x,X,z)\right]\!|_{x=X}
\right\rVert_{L^2(\P;\R^d)}^2\nu^d(dz)\right)^\frac{1}{2}\nonumber\\
&\leq \left(
\int_{\R^d}\left\lVert
\left\lVert\eta_d(x,X,z)\right\rVert_{L^2(\P;\R^d)}\!|_{x=X}
\right\rVert_{L^2(\P;\R^d)}^2\nu^d(dz)\right)^\frac{1}{2}\nonumber\\
&=\left(
\int_{\R^d} \int_\Omega\int_\Omega \lVert \eta_d(x,y,z)\rVert^2
\P_{X}(dy)\P_{X}(dx)\,
\nu^d(dz)\right)^\frac{1}{2}\nonumber\\
&=\left(
\int_\Omega\int_\Omega
\int_{\R^d}  \lVert \eta_d(x,y,z)\rVert^2
\nu^d(dz)
\P_{X}(dy)\P_{X}(dx)\,
\right)^\frac{1}{2}\nonumber
\\
&\leq \left(
\int_\Omega\int_\Omega\left(
c \lVert x\rVert^2 + c \lVert y\rVert^2
\right)
\P_{X}(dy)\P_{X}(dx)
\right)^\frac{1}{2}
+\left(
\int_{\R^d} \left\lVert\eta_d(0,0,z)\right\rVert^2\nu^d(dz)
\right)^\frac{1}{2}\nonumber\\
&=\sqrt{2c} \left\lVert
X
\right\rVert_{L^2(\P;\R^d)}
+\left(
\int_{\R^d} \left\lVert\eta_d(0,0,z)\right\rVert^2\nu^d(dz)
\right)^\frac{1}{2}.\label{a03d}
\end{align}
This and \eqref{e12} show \eqref{a05e}.

Next, \eqref{e11} and Jensen's inequality imply for all
 $X,Y\in L^2(\P;\R^d)$ that
\begin{align} 
&
\left\lVert
\E \!\left[\mu_d(x,X)\right]\!|_{x=X}
-
\E \!\left[\mu_d(y,Y)\right]\!|_{y=Y}
 \right\rVert_{L^2(\P;\R^d)}\nonumber\\
&
=\left\lVert
\E \left[ \mu_d(x,X)-\mu_d(y,Y)\right]|_{(x,y)=(X,Y)}\right\rVert_{L^2(\P;\R^d)}
\nonumber\\
&\leq
\left\lVert
\left\lVert \mu_d(x,X)-\mu_d(y,Y)
\right\rVert_{L^2(\P;\R^d)}|_{(x,y)=(X,Y)}
\right\rVert_{L^2(\P;\R^d)}\nonumber\\
&\leq
\left\lVert
\left[c\lVert x-y\rVert+ c \left\lVert
X-Y \right\rVert_{L^2(\P;\R^d)}
\right]\!|_{(x,y)=(X,Y)}
\right\rVert_{L^2(\P;\R^d)}\nonumber\\
&\leq 2c \lVert X -Y\rVert_{L^2(\P;\R^d)}\label{a12}
\end{align}
and similarly
\begin{align} \begin{split} \label{a13}
&
\left\lVert
\E \!\left[\sigma_d(x,X)\right]\!|_{x=X}
-
\E \!\left[\sigma_d(y,Y)\right]\!|_{y=Y}
 \right\rVert_{L^2(\P;\R^{d\times d})}\leq
 2 c \lVert X -Y\rVert_{L^2(\P;\R^d)}.
\end{split}
\end{align}
This shows \eqref{f01}.

Next, Jensen's inequality and \eqref{e11b} show for all $X,Y\in L^2(\P;\R^d)$ that 
\begin{align}
&
\int_{\R^d}\left\lVert
\E \!\left[\eta_d(x,X,z)\right]\!|_{x=X}
-
\E \!\left[\eta_d(x,Y,z)\right]\!|_{x=Y}
\right\rVert_{L^2(\P;\R^d)}^2
\nu^d(dz)\nonumber\\
&=
\int_{\R^d}\E \!\left[\left\lVert
\E \!\left[\eta_d(x,X,z)\right]\!|_{x=X}
-
\E \!\left[\eta_d(x,Y,z)\right]\!|_{x=Y}
\right\rVert^2\right]
\nu^d(dz)\nonumber\\
&=
\int_{\R^d}\E \!\left[\left\lVert
\E \!\left[
\eta_d(x,X,z)
-
\eta_d(y,Y,z)
\right]|_{(x,y)=(X,Y)}
\right\rVert^2\right]
\nu^d(dz)\nonumber\\
&=
\int_{\R^d}\E \!\left[\left\lVert
\E \!\left[
\eta_d(x,X,z)
-
\eta_d(y,Y,z)
\right]
\right\rVert^2|_{(x,y)=(X,Y)}\,
\right]\nu^d(dz)\nonumber\\
&\leq 
\int_{\R^d}
\E \!\left[
\E \!\left[\left\lVert
\eta_d(x,X,z)
-
\eta_d(y,Y,z)
\right\rVert^2\right]\!|_{(x,y)=(X,Y)}\,
\right]\nu^d(dz)\nonumber\\
&= \int_{\R^d}\int_\Omega\int_\Omega
\left\lVert
\eta_d (x,\tilde{x},z)
-\eta_d(y,\tilde{y},z)
\right\rVert^2
\P_{(X,Y)} (d(\tilde{x},\tilde{y}))\,
\P_{(X,Y)} (d(x,y))
\nu^d(dz)\nonumber\\
&= \int_\Omega\int_\Omega\int_{\R^d}
\left\lVert
\eta_d (x,\tilde{x},z)
-\eta_d(y,\tilde{y},z)
\right\rVert^2
\nu^d(dz)
\P_{(X,Y)} (d(\tilde{x},\tilde{y}))
\P_{(X,Y)} (d(x,y))
\nonumber\\
&\leq  \int_\Omega \int_\Omega\left(
c \lVert x-y\rVert^2+
c \lVert \tilde{x}-\tilde{y}\rVert^2
\right)
\P_{(X,Y)} (d(\tilde{x},\tilde{y}))\,
\P_{(X,Y)} (d(x,y))
\nonumber\\
&
=2c\left\lVert
X-Y
\right\rVert_{L^2(\P;\R^d)}^2.\label{a12f}
\end{align}
This shows \eqref{f02}.
The proof of Lemma~\ref{a05f} is thus completed.
\end{proof}
\subsection{Polynomial moment bound for the exact solution}
\begin{lemma}[Polynomial moment bound for $X^d_\infty$]\label{b01b}
For every $d\in \N$ let $X^d_\infty$ be the process in Theorem~\ref{b02a}.
Then there exists $\kappa\in (0,\infty)$ such that for all
$d\in \N$,  $t\in [0,T]$ we have that
$\lVert X^d_\infty(t)\rVert_{L^2(\P;\R^d)}\leq \kappa d^\kappa$.
\end{lemma}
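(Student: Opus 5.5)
We plan to prove the bound by a Gronwall argument applied to the function $f(t):=\sup_{s\in[0,t]}\lVert X^d_\infty(s)\rVert_{L^2(\P;\R^d)}^2$. By Theorem~\ref{b02a}, $f(T)<\infty$, so Gronwall is applicable. Measurability of $f$ is not an issue because $f$ is nondecreasing.

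Fix $d\in\N$ and $t\in[0,T]$, and write \eqref{eq:thm_mckv} as $\xi_d$ plus a drift term, a Brownian stochastic integral, and a compensated Poisson integral. By $(a_1+\dots+a_4)^2\le 4\sum a_i^2$, we have $\lVert X^d_\infty(t)\rVert_{L^2}^2\le 4(\lVert\xi_d\rVert^2+I_1+I_2+I_3)$, with one term $I_i$ for each integral. The three terms are bounded as follows.

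\begin{itemize}
\item For the drift term, Jensen's inequality (Cauchy--Schwarz in time) gives $I_1\le T\int_0^t\lVert\E[\mu_d(x,X^d_\infty(s-))]|_{x=X^d_\infty(s-)}\rVert_{L^2}^2\,ds$.
\item For the Brownian integral, the It\^o isometry gives $I_2=\int_0^t\lVert\E[\sigma_d(x,X^d_\infty(s-))]|_{x=X^d_\infty(s-)}\rVert_{L^2}^2\,ds$. The integrand is predictable: it is the left limit of an adapted c\`adl\`ag process, composed with the deterministic measurable map $x\mapsto\E[\sigma_d(x,Y)]$.
\item For the jump integral, the isometry for compensated Poisson integrals gives $I_3=\int_0^t\int_{\R^d}\lVert\E[\eta_d(x,X^d_\infty(s-),z)]|_{x=X^d_\infty(s-)}\rVert_{L^2}^2\,\nu^d(dz)\,ds$. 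Square integrability of this integrand with respect to $\P\otimes\nu^d\otimes ds$ follows from Lemma~\ref{a05f}\eqref{a05e} together with $f(T)<\infty$.
\end{itemize}

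Next we insert Lemma~\ref{a05f}\eqref{a05c} and \eqref{a05e} with $X\gets X^d_\infty(s-)$. This uses $\lVert X^d_\infty(s-)\rVert_{L^2}^2\le f(s)$, which holds by Fatou's lemma since $X^d_\infty(s-)$ is the a.s.\ limit of $X^d_\infty(r)$ as $r\uparrow s$. Each integrand is then at most $(cd^c+2c\sqrt{f(s)})^2\le 2c^2d^{2c}+8c^2f(s)$, using $c\ge1$. Together with $\lVert\xi_d\rVert\le cd^c$ from \eqref{e12}, and noting that the same bound holds for every $r\le t$ in place of $t$, we obtain
\begin{align}
f(t)\le 4c^2d^{2c}+4(T+2)\int_0^t\left(2c^2d^{2c}+8c^2f(s)\right)ds\le C_1 d^{2c}+C_2\int_0^t f(s)\,ds,
\end{align}
where $C_1,C_2$ depend only on $c$ and $T$.

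Gronwall's inequality then yields $f(T)\le C_1d^{2c}e^{C_2T}$. Hence $\lVert X^d_\infty(t)\rVert_{L^2}\le \sqrt{C_1e^{C_2T}}\,d^{c}$, and the claim follows with $\kappa:=\max\{c,\sqrt{C_1e^{C_2T}}\}$, which is independent of $d$.

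The only delicate step is justifying the two isometries. This requires predictability and square integrability of the integrands, both of which follow from the a priori finiteness $\sup_s\lVert X^d_\infty(s)\rVert_{L^2}<\infty$ in Theorem~\ref{b02a} and from Lemma~\ref{a05f}\eqref{a05a}, \eqref{a05b}. Apart from this, the argument is routine; the essential point is that every constant depends only on $c$ and $T$, so the polynomial dependence on $d$ comes solely from \eqref{e12}.
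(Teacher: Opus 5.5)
Your proposal is correct and follows essentially the same route as the paper: decompose via \eqref{eq:thm_mckv}, use Jensen for the drift and the two isometries for the stochastic integrals, insert Lemma~\ref{a05f} and \eqref{e12}, then apply Gr\"onwall using the a priori finiteness from Theorem~\ref{b02a}. Your differences are cosmetic: you work with squared norms rather than the paper's unsquared form with a square-root integral, you track explicit constants, and you justify the left limits via Fatou, a step the paper leaves implicit.
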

\begin{proof}[Proof of Lemma~\ref{b01b}]The triangle inequality, \eqref{eq:thm_mckv}, Jensen's inequality, It\^o's isometry, Lemma~\ref{a05f}, and \eqref{e12} show that there exist  $\kappa_1,\kappa_2\in (0,\infty)$ such that for all $d\in \N$, $t\in [0,T]$ that
\begin{align}
\left\lVert X^{d}_{\infty}(t)\right\rVert_{L^2(\P;\R^d)}& \leq \lVert\xi_d\rVert+
{\kappa_1}
\left(\int_{0}^{t}
\left\lVert
\E\! \left[\mu_d (x, X^d_{\infty}(s-) )\right]\!|_{x=X^{d}_{\infty}(s-)}
\right\rVert_{L^2(\P;\R^d)}^2ds
\right)^\frac{1}{2}
\nonumber\\
&\quad 
+\left\lVert\int_{0}^{t}
\E\! \left[\sigma_d (x, X^{d}_{\infty}(s-) )\right]\!|_{x=X^{d}_{\infty}(s-)}
\,dW^{d,0}(s)\right\rVert_{L^2(\P;\R^d)}\nonumber
\\
&\quad +\left\lVert\int_{0}^{t}\int_{\R^d}
\E\! \left[\eta_d (x, X^{d}_{\infty}(s-),z )\right]\!|_{x=X^{d}_{\infty}(s-)}\,\tilde{N}^{d,0}(dz,ds)
\right\rVert_{L^2(\P;\R^d)}\nonumber\\
&= 
\lVert\xi_d\rVert+
{\kappa_1}
\left(\int_{0}^{t}
\left\lVert
\E\! \left[\mu_d (x, X^d_{\infty}(s-) )\right]\!|_{x=X^{d}_{\infty}(s-)}
\right\rVert_{L^2(\P;\R^d)}^2ds
\right)^\frac{1}{2}\nonumber\\
&\quad
+
\left(\int_{0}^{t}
\left\lVert
\E\! \left[\sigma_d (x, X^d_{\infty}(s-) )\right]\!|_{x=X^{d}_{\infty}(s-)}
\right\rVert_{L^2(\P;\R^{d\times d})}^2ds
\right)^\frac{1}{2}\nonumber
\\
&\quad
+
\left(\int_{0}^{t}\int_{\R^d}
\left\lVert
\E\! \left[\eta_d (x, X^d_{\infty}(s-),z )\right]\!|_{x=X^{d}_{\infty}(s-)}
\right\rVert_{L^2(\P;\R^{d})}^2\,\nu^d(dz)\,ds
\right)^\frac{1}{2}\nonumber\\
&\leq \kappa_2 d^{\kappa_2}+\kappa_2\left(
\int_{0}^{t}\left\lVert
X^d_\infty(s-)
\right\rVert_{L^2(\P;\R^d)}^2\,ds
\right)^\frac{1}{2}.
\end{align}This shows that there exists $\kappa\in (0,\infty)$ such that for all $d\in \N$, $t\in [0,T]$ we have that
\begin{align}
\sup_{u\in [0,t]}
\left\lVert X^{d}_{\infty}(u)\right\rVert_{L^2(\P;\R^d)}\leq \kappa d^\kappa+\kappa\left(
\int_{0}^{t}
\sup_{u\in [0,s]}
\left\lVert
X_\infty^d(u)
\right\rVert_{L^2(\P;\R^d)}^2\,ds
\right)^\frac{1}{2}.
\end{align}
This, Gr\"onwall's lemma, and the fact that 
$
\sup_{s\in [0,T]}\lVert X^d_\infty(s)\rVert_{L^2(\P;\R^d)}<\infty
$ complete the proof of Lemma~\ref{b01b}.
\end{proof}

\subsection{Euler--Maruyama approximations and the corresponding polynomial moment bound}
We introduce Euler--Maruyama approximations in Lemma~\ref{a14} below. In Lemma~\ref{b01} we prove that, as in the standard case, the $L^2$-norm of the Euler-Maruyama approximations is bounded by a polynomial of the dimension.
\begin{lemma}[Truncated version of Euler--Maruyama approximations]\label{a14}
For every $K,d\in \N$, $\delta\in [0,\infty)$ there exists a unique $(\F_t)_{t\in [0,T]}$-adapted process $X^{d}_{K,\delta}=(X^d_{K,\delta}(t))_{t\in [0,T]}$ with c\`adl\`ag sample paths such that we have that 
$
\sup_{s\in [0,T]}\lVert X^d_{K,\delta}(s)\rVert_{L^2(\P;\R^d)}<\infty
$
and 
$\P$-a.s.\ for all $t\in [0,T]$ that
\begin{align}
X^{d}_{K,\delta}(t)&= \xi_d+\int_{0}^{t}
\E\! \left[\mu_d (x, X^d_{K,\delta}(\rdown{s-}_K) )\right]\!|_{x=X^{d}_{K,\delta}(\rdown{s-}_K)}\,ds\nonumber\\
&\quad 
+\int_{0}^{t}
\E\! \left[\sigma_d (x, X^{d}_{K,\delta}(\rdown{s-}_K) )\right]\!|_{x=X^{d}_{K,\delta}(\rdown{s-}_K)}\,dW^{d,0}(s)\nonumber
\\
&\quad +\int_{0}^{t}\int_{A^d_\delta}
\E\! \left[\eta_d (x, X^{d}_{K,\delta}(\rdown{s-}_K),z )\right]\!|_{x=X^{d}_{K,\delta}(\rdown{s-}_K)}\,\tilde{N}^{d,0}(dz,ds).\label{b02b}
\end{align}
\end{lemma}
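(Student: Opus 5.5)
The natural approach is to build $X^d_{K,\delta}$ explicitly by forward recursion over the grid points $0,\frac{T}{K},\ldots,T$. This avoids any fixed-point argument: on each interval $(\frac{(j-1)T}{K},\frac{jT}{K}]$ the integrands in \eqref{b02b} depend only on $X^d_{K,\delta}(\rdown{s-}_K)=X^d_{K,\delta}(\frac{(j-1)T}{K})$. For $s$ in this interval one has $\rdown{s}_K=\frac{(j-1)T}{K}$, and the left limit $\lim_{r\uparrow s}\rdown{r}_K$ equals the same value except at the single point $s=\frac{(j-1)T}{K}$. That point is a Lebesgue null set, and since the integrand is predictable (left-continuous in $s$ in the piecewise sense) it does not affect the $W$- or $\tilde N$-integrals. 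So the coefficients are frozen random variables on each interval, and the equation becomes an explicit update.

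Concretely, set $X^d_{K,\delta}(0)=\xi_d$. Suppose $Y:=X^d_{K,\delta}(\frac{(j-1)T}{K})$ has been constructed, is $\F_{(j-1)T/K}$-measurable, and lies in $L^2(\P;\R^d)$. Define the deterministic-in-$x$ functions
\begin{align}
b(x):=\E[\mu_d(x,Y)],\quad a(x):=\E[\sigma_d(x,Y)],\quad g(x,z):=\E[\eta_d(x,Y,z)].
\end{align}
These are well-defined and measurable by Lemma~\ref{a05f}\eqref{a05a}--\eqref{a05b}, using Fubini's theorem for the measurability of $g$. For $t$ in the interval, set
\begin{align}
X^d_{K,\delta}(t):=Y+b(Y)\Bigl(t-\tfrac{(j-1)T}{K}\Bigr)+a(Y)\Bigl(W^{d,0}(t)-W^{d,0}\bigl(\tfrac{(j-1)T}{K}\bigr)\Bigr)+\int_{\frac{(j-1)T}{K}}^{t}\int_{A^d_\delta}g(Y,z)\,\tilde N^{d,0}(dz,ds).
\end{align}
The stochastic integral is well-defined because $s\mapsto g(Y,z)$ is predictable (it is constant in $s$ and $\F_{(j-1)T/K}$-measurable). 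It is also square integrable: by Lemma~\ref{a05f}\eqref{a05e}, $\E\int_{A^d_\delta}\lVert g(Y,z)\rVert^2\nu^d(dz)\le (cd^c+\sqrt{2c}\lVert Y\rVert_{L^2})^2<\infty$.

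The resulting process is adapted and càdlàg, as a Brownian increment plus a compensated-Poisson martingale that may be chosen càdlàg. By It\^o's isometry together with Lemma~\ref{a05f}\eqref{a05c},\eqref{a05e},
\begin{align}
\sup_{t}\lVert X^d_{K,\delta}(t)\rVert_{L^2}\le (1+C)\lVert Y\rVert_{L^2}+C<\infty
\end{align}
on the interval, for some constant $C$ depending on $c,d,T,K$. This gives the $L^2$-bound after finitely many steps. Summing the increments over $j$ shows that \eqref{b02b} holds $\P$-a.s.\ for all $t$; it holds simultaneously for all $t$ because both sides are càdlàg and agree at every $t$ a.s.

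Uniqueness goes by induction over $j$. Any solution $\tilde X$ satisfies, on the $j$-th interval, the same explicit formula with $Y$ replaced by $\tilde X(\frac{(j-1)T}{K})$. If the two processes agree a.s.\ at the left grid point, then the laws entering the expectations coincide, hence the frozen functions $b,a,g$ coincide, and the two processes are indistinguishable on the interval (again using càdlàg paths).

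The only mildly delicate point, and the one I expect to be the main obstacle, is the bookkeeping around $\rdown{s-}_K$ at grid points. One has to check that evaluating at $\rdown{s-}_K$ versus $\rdown{s}_K$ only differs on a countable set of $s$. That set is null for $ds$ and for $\nu^d(dz)ds$, and the integrands of both versions are predictable, so the stochastic integrals coincide. A secondary issue is the joint measurability of $(\omega,z)\mapsto g(Y(\omega),z)$; this follows from Fubini applied to the Borel function $(x,y,z)\mapsto\eta_d(x,y,z)$ and the law of $Y$. The case $\delta=0$ needs no separate treatment, since then $A^d_0=\R^d$.
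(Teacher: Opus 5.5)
Your proposal is correct and is the paper's argument: the paper proves Lemma~\ref{a14} only by remarking that existence and uniqueness follow by induction along the time grid together with Lemma~\ref{a05f}. Your grid-by-grid update with frozen coefficients, and your induction for uniqueness, spell out exactly that argument. The only superfluous point is your worry about grid points: since $\rdown{\cdot}_K$ is left-continuous, $X^d_{K,\delta}(\rdown{s-}_K)=X^d_{K,\delta}(\rdown{s}_K)$ for every $s\in(0,T]$, so there is no exceptional set at all.
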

\begin{proof}[Proof of Lemma~\ref{a14}]
 The existence and uniqueness of $X^d_{K,\delta}$, $d,K\in \N$, $\delta\in [0,\infty)$, follows from induction along the time grids and Lemma~\ref{a05f}.  The proof of Lemma~\ref{a14} is thus completed.
\end{proof}

\begin{lemma}[Polynomial moment bound for $X^d_{K,\delta}$]\label{b01}
For every $d,K\in \N$, $\delta\in [0,\infty)$ let $X^d_{K,\delta}$  be the process in Lemma~\ref{a14}. 
Then there exists $\kappa\in (0,\infty)$ such that for all
$d,K\in \N$, $t\in [0,T]$, $\delta\in [0,\infty)$ we have that
$\lVert X^d_{K,\delta}(t)\rVert_{L^2(\P;\R^d)}\leq \kappa d^\kappa$.
\end{lemma}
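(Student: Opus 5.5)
We mirror the proof of Lemma~\ref{b01b}, now applied to the Euler--Maruyama process $X^d_{K,\delta}$ from Lemma~\ref{a14}. The one new feature is that the coefficients are evaluated at the grid point $X^d_{K,\delta}(\rdown{s-}_K)$ rather than at $X^d_{K,\delta}(s-)$. The key constraint is that every constant must be uniform in $K$ and $\delta$.

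First we take $L^2$-norms in \eqref{b02b} and apply the triangle inequality. The drift term is handled by Jensen's inequality, giving the bound $\sqrt{T}\,(\int_0^t \lVert\cdot\rVert^2_{L^2}\,ds)^{1/2}$. The Brownian integral is handled by It\^o's isometry, and the compensated Poisson integral over $A^d_\delta$ by the isometry for compensated Poisson random measures. For the latter, the integrand is predictable because it depends on $X^d_{K,\delta}(\rdown{s-}_K)$, which is known before time $s$. The jump term then becomes
\[
\left(\int_0^t\int_{A^d_\delta}\left\lVert\E\!\left[\eta_d(x,X^d_{K,\delta}(\rdown{s-}_K),z)\right]\!|_{x=X^d_{K,\delta}(\rdown{s-}_K)}\right\rVert^2_{L^2(\P;\R^d)}\nu^d(dz)\,ds\right)^{\frac12}.
\]
Since the integrand is nonnegative, we may enlarge $A^d_\delta$ to $\R^d$. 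This is exactly why the bound is uniform in $\delta$.

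Next we apply Lemma~\ref{a05f}\eqref{a05c} and \eqref{a05e} with $X\gets X^d_{K,\delta}(\rdown{s-}_K)$. This is legitimate because $\sup_s\lVert X^d_{K,\delta}(s)\rVert_{L^2}<\infty$ by Lemma~\ref{a14}, so $X^d_{K,\delta}(\rdown{s-}_K)\in L^2$. Together with \eqref{e12}, this yields $\kappa_1\in(0,\infty)$, independent of $d,K,\delta,t$, such that
\[
\lVert X^d_{K,\delta}(t)\rVert_{L^2(\P;\R^d)}\le \kappa_1 d^{\kappa_1}+\kappa_1\left(\int_0^t \lVert X^d_{K,\delta}(\rdown{s-}_K)\rVert^2_{L^2(\P;\R^d)}\,ds\right)^{\frac12}.
\]

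Now set $f(t)=\sup_{u\in[0,t]}\lVert X^d_{K,\delta}(u)\rVert_{L^2(\P;\R^d)}$, which is finite by Lemma~\ref{a14}. The main obstacle is controlling the left limit at the grid point. The point $\rdown{s-}_K$ is the left limit of the step function $r\mapsto X(\rdown{r}_K)$, and it equals the value at the grid point $\rdown{s}_K$ or at the preceding grid point, so it lies in $[0,s]$. Hence $\lVert X^d_{K,\delta}(\rdown{s-}_K)\rVert_{L^2}\le f(s)$. If a genuine left limit $X(u-)$ ever had to be used, Fatou's lemma would give $\lVert X(u-)\rVert_{L^2}\le\liminf_{r\uparrow u}\lVert X(r)\rVert_{L^2}\le f(u)$, so this case is also covered.

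Squaring then gives $f(t)^2\le 2\kappa_1^2d^{2\kappa_1}+2\kappa_1^2\int_0^t f(s)^2\,ds$. Gr\"onwall's lemma yields $f(T)^2\le 2\kappa_1^2d^{2\kappa_1}e^{2\kappa_1^2T}$. Choosing $\kappa$ large enough then gives $\lVert X^d_{K,\delta}(t)\rVert_{L^2(\P;\R^d)}\le\kappa d^\kappa$ for all $d,K\in\N$, $t\in[0,T]$, $\delta\in[0,\infty)$.
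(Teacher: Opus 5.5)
Your proposal is correct and follows essentially the same route as the paper: the triangle inequality in \eqref{b02b}, Jensen for the drift, the It\^o and compensated-Poisson isometries, and Lemma~\ref{a05f} together with \eqref{e12}, where enlarging $A^d_\delta$ to $\R^d$ is exactly what the paper does implicitly when invoking Lemma~\ref{a05f}\eqref{a05e}. You then apply Gr\"onwall to the running supremum, which is finite by Lemma~\ref{a14}; your Fatou aside is unnecessary, since with the paper's convention $X^d_{K,\delta}(\rdown{s-}_K)$ is always the value at the grid point $\rdown{s}_K\le s$.
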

\begin{proof}[Proof of Lemma~\ref{b01}]The triangle inequality, \eqref{b02b}, Jensen's inequality, It\^o's isometry, Lemma~\ref{a05f}, and \eqref{e12} show that there exist  $\kappa_1,\kappa_2\in (0,\infty)$ such that for all $d,K\in \N$, $\delta\in [0,\infty)$, $t\in [0,T]$ that
\begin{align}
\left\lVert X^{d}_{K,\delta}(t)\right\rVert_{L^2(\P;\R^d)}& \leq \lVert\xi_d\rVert+
{\kappa_1}
\left(\int_{0}^{t}
\left\lVert
\E\! \left[\mu_d (x, X^d_{K,\delta}(\rdown{s-}_{K}))\right]\!|_{x=X^{d}_{{K,\delta}}(\rdown{s-}_K)}
\right\rVert_{L^2(\P;\R^d)}^2ds
\right)^\frac{1}{2}
\nonumber\\
&\quad 
+\left\lVert\int_{0}^{t}
\E\! \left[\sigma_d (x, X^{d}_{K,\delta}(\rdown{s-}_K) )\right]\!|_{x=X^{d}_{K,\delta}(\rdown{s-}_K)}
\,dW^{d,0}(s)\right\rVert_{L^2(\P;\R^d)}\nonumber
\\
&\quad +\left\lVert\int_{0}^{t}\int_{A^d_\delta}
\E\! \left[\eta_d (x, X^{d}_{K,\delta}(\rdown{s-}_K),z )\right]\!|_{x=X^{d}_{K,\delta}(\rdown{s-}_{K})}\,\tilde{N}^{d,0}(dz,ds)
\right\rVert_{L^2(\P;\R^d)}\nonumber\\
&= 
\lVert\xi_d\rVert+
{\kappa_1}
\left(\int_{0}^{t}
\left\lVert
\E\! \left[\mu_d (x, X^d_{K,\delta}(\rdown{s-}_K) )\right]\!|_{x=X^{d}_{K,\delta}(\rdown{s-}_K)}
\right\rVert_{L^2(\P;\R^d)}^2ds
\right)^\frac{1}{2}\nonumber\\
&\quad
+
\left(\int_{0}^{t}
\left\lVert
\E\! \left[\sigma_d (x, X^d_{K,\delta}(\rdown{s-}_K) )\right]\!|_{x=X^{d}_{K,\delta}(\rdown{s-}_K)}
\right\rVert_{L^2(\P;\R^{d\times d})}^2ds
\right)^\frac{1}{2}\nonumber
\\
&\quad
+
\left(\int_{0}^{t}\int_{A_\delta^d}
\left\lVert
\E\! \left[\eta_d (x, X^d_{K,\delta}(\rdown{s-}_K),z )\right]\!|_{x=X^{d}_{K,\delta}(\rdown{s-}_K)}
\right\rVert_{L^2(\P;\R^{d})}^2\,\nu^d(dz)\,ds
\right)^\frac{1}{2}\nonumber\\
&\leq \kappa_2 d^{\kappa_2}+\kappa_2\left(
\int_{0}^{t}\left\lVert
X^d_{K,\delta}(\rdown{s-}_K)
\right\rVert_{L^2(\P;\R^d)}^2\,ds
\right)^\frac{1}{2}.
\end{align}This shows that there exists $\kappa\in (0,\infty)$ such that for all $d,K\in \N$, $t\in [0,T]$, $\delta\in [0,\infty)$ we have that
\begin{align}
\sup_{u\in [0,t]}
\left\lVert X^{d}_{K,\delta}(u)\right\rVert_{L^2(\P;\R^d)}\leq \kappa d^\kappa+\kappa\left(
\int_{0}^{t}
\sup_{u\in [0,s]}
\left\lVert
X_{K,\delta}^d(u)
\right\rVert_{L^2(\P;\R^d)}^2\,ds
\right)^\frac{1}{2}.
\end{align}
This, Gr\"onwall's lemma, and the fact that 
$
\sup_{s\in [0,T]}\lVert X^d_{K,\delta}(s)\rVert_{L^2(\P;\R^d)}<\infty
$ complete the proof of Lemma~\ref{b01}.
\end{proof}
\subsection{Discretization error}
We establish the time-discretization error. First of all, we need to establish the temporal regularity of the exact solution.
\begin{lemma}[Time regularity of the exact solutions]\label{k01}
Assume Setting~\ref{c01}. 
For every $d\in \N$ let $X^d_\infty$ be the process in Theorem~\ref{b02a}.
Then there exists $\kappa\in (0,\infty)$ such that for all $d\in \N$,
$t_1\in [0,T]$, $t_2\in [t_1,T]$ we have that $\lVert X^d_\infty(t_1)-X^d_\infty(t_2)\rVert_{L^2(\P;\R^d)}
\leq \sqrt{t_2-t_1}\kappa d^\kappa.
$
\end{lemma}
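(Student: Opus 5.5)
We subtract the integral equation \eqref{eq:thm_mckv} at $t_1$ from the one at $t_2$. This gives $X^d_\infty(t_2)-X^d_\infty(t_1)$ as the sum of three integrals over $(t_1,t_2]$: a Lebesgue integral of the drift term, a stochastic integral against $W^{d,0}$, and a compensated Poisson integral against $\tilde N^{d,0}$, each with the mean-field coefficients evaluated at $X^d_\infty(s-)$. By the triangle inequality in $L^2(\P;\R^d)$ it suffices to bound the $L^2$-norm of each of these three terms by $\sqrt{t_2-t_1}\,\kappa d^\kappa$.

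For the drift term we use Minkowski's (or Jensen's) inequality, which gives the bound $\int_{t_1}^{t_2}\lVert\E[\mu_d(x,X^d_\infty(s-))]|_{x=X^d_\infty(s-)}\rVert_{L^2(\P;\R^d)}\,ds$. By Lemma~\ref{a05f}\eqref{a05c} and Lemma~\ref{b01b} the integrand is at most $cd^c+2c\kappa d^\kappa$. Here we note that $\lVert X^d_\infty(s-)\rVert_{L^2}\le\kappa d^\kappa$ as well, by Fatou's lemma and the c\`adl\`ag property. The drift term is therefore bounded by $(t_2-t_1)\cdot\mathrm{poly}(d)\le\sqrt{T}\sqrt{t_2-t_1}\cdot\mathrm{poly}(d)$.

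For the Brownian term, It\^o's isometry gives the square of the $L^2$-norm as $\int_{t_1}^{t_2}\lVert\E[\sigma_d(x,X^d_\infty(s-))]|_{x=X^d_\infty(s-)}\rVert^2_{L^2(\P;\R^{d\times d})}\,ds$, and the same two lemmas bound this by $(t_2-t_1)(cd^c+2c\kappa d^\kappa)^2$. For the jump term, the isometry for compensated Poisson integrals gives the square of the norm as $\int_{t_1}^{t_2}\int_{\R^d}\lVert\E[\eta_d(x,X^d_\infty(s-),z)]|_{x=X^d_\infty(s-)}\rVert^2_{L^2(\P;\R^d)}\,\nu^d(dz)\,ds$, using Tonelli's theorem. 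Lemma~\ref{a05f}\eqref{a05e} together with Lemma~\ref{b01b} bounds this by $(t_2-t_1)(cd^c+\sqrt{2c}\kappa d^\kappa)^2$. Taking square roots and summing the three bounds, then choosing a larger $\kappa$ that absorbs $c$, $T$, and the constant from Lemma~\ref{b01b}, yields the claim.

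The only delicate point is justifying the isometries. This requires predictability of the integrands, which holds because they are evaluated at left limits $X^d_\infty(s-)$ of an adapted c\`adl\`ag process, and square-integrability, which is exactly what Lemma~\ref{a05f}\eqref{a05a}--\eqref{a05b} and the moment bound supply. Once the integrands are known to be predictable and square-integrable, the argument is a routine repetition of the proof of Lemma~\ref{b01b}, restricted to the time interval $(t_1,t_2]$.
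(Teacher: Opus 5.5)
Your proposal is correct and follows essentially the same route as the paper: you split the increment into the drift, Brownian, and compensated Poisson integrals over $(t_1,t_2]$, apply Jensen/Minkowski and the two isometries, and bound the integrands via Lemma~\ref{a05f} and Lemma~\ref{b01b}. The differences are cosmetic: you bound the drift term by $(t_2-t_1)\le\sqrt{T}\sqrt{t_2-t_1}$ rather than via Cauchy--Schwarz, and you make explicit the Fatou argument for $\lVert X^d_\infty(s-)\rVert_{L^2(\P;\R^d)}\le\kappa d^\kappa$, which the paper uses tacitly.
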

\begin{proof}[Proof of Lemma~\ref{k01}]
The triangle inequality, \eqref{eq:thm_mckv}, Jensen's inequality, It\^o's isometry, and Lemmas~\ref{a05f} and~\ref{b01b} show that there exist  $\kappa_1,\kappa_2,\kappa_3\in (0,\infty)$ such that for all $d\in \N$, $t_1\in [0,T]$, $t_2\in [t_1,T]$ that
\begin{align}
\left\lVert X^{d}_{\infty}(t_1)-X^{d}_{\infty}(t_2)\right\rVert_{L^2(\P;\R^d)}& \leq 
{\kappa_1}
\left(\int_{t_1}^{t_2}
\left\lVert
\E\! \left[\mu_d (x, X^d_{\infty}(s-) )\right]\!|_{x=X^{d}_{\infty}(s-)}
\right\rVert_{L^2(\P;\R^d)}^2ds
\right)^\frac{1}{2}
\nonumber\\
&\quad 
+\left\lVert\int_{t_1}^{t_2}
\E\! \left[\sigma_d (x, X^{d}_{\infty}(s-) )\right]\!|_{x=X^{d}_{\infty}(s-)}
\,dW^{d,0}(s)\right\rVert_{L^2(\P;\R^d)}\nonumber
\\
&\quad +\left\lVert\int_{t_1}^{t_2}\int_{\R^d}
\E\! \left[\eta_d (x, X^{d}_{\infty}(s-),z )\right]\!|_{x=X^{d}_{\infty}(s-)}\,\tilde{N}^{d,0}(dz,ds)
\right\rVert_{L^2(\P;\R^d)}\nonumber\\
&= 
{\kappa_1}
\left(\int_{t_1}^{t_2}
\left\lVert
\E\! \left[\mu_d (x, X^d_{\infty}(s-) )\right]\!|_{x=X^{d}_{\infty}(s-)}
\right\rVert_{L^2(\P;\R^d)}^2ds
\right)^\frac{1}{2}\nonumber\\
&\quad
+
\left(\int_{t_1}^{t_2}
\left\lVert
\E\! \left[\sigma_d (x, X^d_{\infty}(s-) )\right]\!|_{x=X^{d}_{\infty}(s-)}
\right\rVert_{L^2(\P;\R^{d\times d})}^2ds
\right)^\frac{1}{2}\nonumber
\\
&\quad
+
\left(\int_{t_1}^{t_2}\int_{\R^d}
\left\lVert
\E\! \left[\eta_d (x, X^d_{\infty}(s-),z )\right]\!|_{x=X^{d}_{\infty}(s-)}
\right\rVert_{L^2(\P;\R^{d})}^2\,\nu^d(dz)\,ds
\right)^\frac{1}{2}\nonumber\\
&\leq \kappa_2\left(
\int_{t_1}^{t_2}\left(
\kappa_2 d^{\kappa_2}+
\left\lVert
X^d_\infty(s-)
\right\rVert_{L^2(\P;\R^d)}^2\right)ds
\right)^\frac{1}{2}\nonumber\\
&\leq \sqrt{t_2-t_1}\kappa_3 d^{\kappa_3}.
\end{align}
This completes the proof of Lemma~\ref{k01}.
\end{proof}

\begin{lemma}[Discretization error]\label{f07}
For every $d\in \N$ let $X^d_\infty$ be the process in Theorem~\ref{b02a}.
For every $d,K\in \N$ let $X^d_{K,0}$
be the process in Lemma~\ref{a14} for $\delta=0$. Then there exists $\kappa\in (0,\infty)$ such that for all
$d,K\in \N$, $t\in [0,T]$ we have that
$\lVert X^d_{K,0}(t)-X^d_\infty(t)
\rVert_{L^2(\P;\R^d)}\leq  {\kappa d^\kappa}/{ \sqrt{K}}$.
\end{lemma}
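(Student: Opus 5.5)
We compare the two integral equations \eqref{eq:thm_mckv} and \eqref{b02b} (with $\delta=0$, so $A^d_0=\R^d$) term by term. Set $e(t):=\sup_{u\in[0,t]}\lVert X^d_{K,0}(u)-X^d_\infty(u)\rVert_{L^2(\P;\R^d)}$. Subtracting the equations, we bound the drift difference with Jensen's inequality, and the $dW^{d,0}$ and $\tilde N^{d,0}$ differences with It\^o's isometry for Brownian and compensated Poisson integrals. This yields, with a constant $\kappa_1$ independent of $d,K$,
\begin{align*}
\lVert X^d_{K,0}(t)-X^d_\infty(t)\rVert_{L^2(\P;\R^d)}
&\leq \kappa_1\Bigl(\int_0^t \bigl\lVert \E[\mu_d(x,X^d_{K,0}(\rdown{s-}_K))]|_{x=X^d_{K,0}(\rdown{s-}_K)}-\E[\mu_d(x,X^d_\infty(s-))]|_{x=X^d_\infty(s-)}\bigr\rVert^2_{L^2}\,ds\Bigr)^{\frac12}\\
&\quad+(\text{analogous }\sigma_d\text{ term})+(\text{analogous }\eta_d\text{ term integrated against }\nu^d(dz)\,ds).
\end{align*}
Lemma~\ref{a05f}\eqref{f01} and \eqref{f02} bound each integrand by a constant multiple of $\lVert X^d_{K,0}(\rdown{s-}_K)-X^d_\infty(s-)\rVert_{L^2(\P;\R^d)}^2$.

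Next we split this quantity by the triangle inequality:
\[
\lVert X^d_{K,0}(\rdown{s-}_K)-X^d_\infty(s-)\rVert\leq \lVert X^d_{K,0}(\rdown{s-}_K)-X^d_\infty(\rdown{s-}_K)\rVert+\lVert X^d_\infty(\rdown{s-}_K)-X^d_\infty(s-)\rVert.
\]
The first term is at most $e(s)$. For the second term we use Lemma~\ref{k01}, which gives $\sqrt{T/K}\,\kappa d^\kappa$ for the distance between $X^d_\infty$ at two times differing by at most $T/K$. Passing to left limits needs a justification: either apply Lemma~\ref{k01} at $r\uparrow s$ together with Fatou's lemma, or note that $X^d_\infty(s-)=X^d_\infty(s)$ a.s.\ for each fixed $s$ and that a Lebesgue-null set of $s$ does not affect the $ds$-integrals. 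Likewise $X^d_{K,0}(\rdown{s-}_K)$ is a grid value $X^d_{K,0}(jT/K)$, so it is covered by $e(s)$.

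Combining, we obtain $e(t)^2\leq \kappa_2 d^{2\kappa_2}K^{-1}+\kappa_2\int_0^t e(s)^2\,ds$. The function $e$ is finite by Lemmas~\ref{b01b} and~\ref{b01}, so Gr\"onwall's lemma gives $e(T)\leq \kappa d^\kappa/\sqrt{K}$ with $\kappa$ independent of $d$ and $K$. The only delicate points are the left-limit and measurability issues handled above and making sure the constants stay uniform in $d$. The latter holds because Lemma~\ref{a05f} produces only the constant $c$, while the polynomial dependence on $d$ enters solely through Lemma~\ref{k01}.
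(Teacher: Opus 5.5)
Your proposal is correct and follows essentially the same route as the paper. You bound the drift difference by Jensen and the $dW^{d,0}$ and $\tilde N^{d,0}$ differences by It\^o's isometry together with Lemma~\ref{a05f}, split at the grid point $\rdown{s-}_K$, control the time increment of $X^d_\infty$ by Lemma~\ref{k01}, and close with Gr\"onwall. Your explicit treatment of the left limits (Fatou, or no fixed jump times) and the use of the monotone function $e$ only spell out steps the paper leaves implicit.
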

\begin{proof}[Proof of Lemma~\ref{f07}]
First, Jensen's inequality, Lemma~\ref{a05f},  the triangle inequality, and Lemma~\ref{k01} show that there exist  $\kappa_1,\kappa_2,\kappa_3\in (0,\infty)$ such that
for all $ d,K\in \N$, $t\in [0,T]$
we have that
\begin{align}
&\left\lVert
\int_{0}^{t}\left(
\E\! \left[\mu_d (x, {X}^{d}_{K,0}(\rdown{s-}_K) )\right]\!|_{x={X}^{d}_{K,0}(\rdown{s-}_K)}
-
\E\! \left[\mu_d (x, X_\infty^d(s-)) \right]\!|_{x=X_\infty^d(s-)}\right)ds\right\rVert_{L^2(\P;\R^d)}
\nonumber\\
&\leq \kappa_1\left(
\int_{0}^{t}
\left\lVert
\E\! \left[\mu_d (x, {X}^{d}_{K,0}(\rdown{s-}_K) )\right]\!|_{x={X}^{d}_{K,0}(\rdown{s-}_K)}
-
\E\! \left[\mu_d (x, X_\infty^d(s-) )\right]\!|_{x=X_\infty^d(s-)}
\right\rVert_{L^2(\P;\R^d)}^2\,ds
\right)^\frac{1}{2}\nonumber\\
&\leq \kappa_2\left(\int_{0}^{t}
\left\lVert
{X}^{d}_{K,0}(\rdown{s-}_K)-
X_\infty^d(s-)
\right\rVert_{L^2(\P;\R^d)}^2ds
\right)^\frac{1}{2}\nonumber\\
&\leq \kappa_2\left(\int_{0}^{t}
\left\lVert
{X}^{d}_{K,0}(\rdown{s-}_K)-
X_\infty^d(\rdown{s-}_K)
\right\rVert_{L^2(\P;\R^d)}^2ds
\right)^\frac{1}{2}\nonumber\\&\quad
+
\kappa_2\left(\int_{0}^{t}
\left\lVert
{X}^{d}_{\infty}(\rdown{s-}_K)-
X_\infty^d(s-)
\right\rVert_{L^2(\P;\R^d)}^2ds
\right)^\frac{1}{2}
\nonumber\\
&\leq 
\kappa_3\left(\int_{0}^{t}
\left\lVert
{X}^{d}_{K,0}(\rdown{s-}_K)-
X_\infty^d(\rdown{s-}_K)
\right\rVert_{L^2(\P;\R^d)}^2ds
\right)^\frac{1}{2}
+\frac{\kappa_3 d^{\kappa_3}}{\sqrt{K}}
.\label{f03b}
\end{align}
Next,
It\^o's isometry, Lemma~\ref{a05f},  the triangle inequality, and  Lemma~\ref{k01} show that there exists $\kappa\in (0,\infty)$ such that
for all $ d,K\in \N$, $t\in [0,T]$ we have that
\begin{align}
&\left\lVert
\int_{0}^{t}\left(
\E\! \left[\sigma_d (x, {X}^{d}_{K,0}(\rdown{s-}_K) )\right]\!|_{x={X}^{d}_{K,0}(\rdown{s-}_K)}
-
\E\! \left[\sigma_d (x, X^d_\infty(s-)) \right]\!|_{x=X^d_\infty(s-)}\right)dW^{d,0}(s)\right\rVert_{L^2(\P;\R^d)}
\nonumber\\
&=\left(
\int_{0}^{t}
\left\lVert
\E\! \left[\sigma_d (x, {X}^{d}_{K,0}(\rdown{s-}_K) )\right]\!|_{x={X}^{d}_{K,0}(\rdown{s-}_K)}
-
\E\! \left[\sigma_d (x, X^d_\infty(s-))\right]\!|_{x=X^d_\infty(s-)}
\right\rVert_{L^2(\P;\R^{d\times d})}^2\,ds
\right)^\frac{1}{2}\nonumber\\
&\leq 2c\left(\int_{0}^{t}
\left\lVert
{X}^{d}_{K,0}(\rdown{s-}_K)-
X^d_\infty(s-)
\right\rVert_{L^2(\P;\R^d)}^2 ds
\right)^\frac{1}{2}
\nonumber\\
&\leq 2c\left(\int_{0}^{t}
\left\lVert
{X}^{d}_{K,0}(\rdown{s-}_K)-
X_\infty^d(\rdown{s-}_K)
\right\rVert_{L^2(\P;\R^d)}^2 ds
\right)^\frac{1}{2}\nonumber\\&\quad
+
2c\left(\int_{0}^{t}
\left\lVert
{X}^{d}_{\infty}(\rdown{s-}_K)-
X_\infty^d(s-)
\right\rVert_{L^2(\P;\R^d)}^2 ds
\right)^\frac{1}{2}
\nonumber\\
&\leq 2c\left(\int_{0}^{t}
\left\lVert
{X}^{d}_{K,0}(\rdown{s-}_K)-
X_\infty^d(\rdown{s-}_K)
\right\rVert_{L^2(\P;\R^d)}^2ds
\right)^\frac{1}{2}
+\frac{\kappa d^\kappa}{\sqrt{K}}
\label{f04b}
\end{align}
and
\begin{align}
&\biggl\lVert
\int_{0}^{t}\int_{\R^d}\Bigl(
\E\! \left[\eta_d (x, {X}^{d}_{K,0}(\rdown{s-}_K),z )\right]\!|_{x={X}^{d}_{K,0}(\rdown{s-}_K)}\nonumber\\&\qquad\qquad\qquad-
\E\! \left[\eta_d (x, X^d_\infty(s-),z )\right]\!|_{x=X^d_\infty(s-)}\Bigr)\tilde{N}^{d,0}(dz,ds)\biggr\rVert_{L^2(\P;\R^d)}
\nonumber\\
&=\biggl(
\int_{0}^{t}
\int_{\R^d}
\Bigl\lVert
\E\! \left[\eta_d (x, {X}^{d}_{K,0}(\rdown{s-}_K),z )\right]\!|_{x={X}^{d}_{K,0}(\rdown{s-}_K)}\nonumber\\&\qquad\qquad\qquad
-
\E\! \left[\eta_d (x, X^d_\infty(s-),z )\right]\!|_{x=X^d_\infty(s-)}
\Bigr\rVert_{L^2(\P;\R^d)}^2\nu^d(dz)\,ds
\biggr)^\frac{1}{2}\nonumber\\
&\leq \sqrt{2c}\left(\int_{0}^{t}
\left\lVert
{X}^{d}_{K,0}(\rdown{s-}_K)-
X^d_\infty(s-)
\right\rVert_{L^2(\P;\R^d)}^2 ds
\right)^\frac{1}{2}
\nonumber\\
&\leq \sqrt{2c}\left(\int_{0}^{t}
\left\lVert
{X}^{d}_{K,0}(\rdown{s-}_K)-
X_\infty^d(\rdown{s-}_K)
\right\rVert_{L^2(\P;\R^d)}^2 ds
\right)^\frac{1}{2}\nonumber\\
&\qquad
+
\sqrt{2c}\left(\int_{0}^{t}
\left\lVert
{X}^{d}_{\infty}(\rdown{s-}_K)-
X_\infty^d(s-)
\right\rVert_{L^2(\P;\R^d)}^2 ds
\right)^\frac{1}{2}
\nonumber\\
&\leq \sqrt{2c}\left(\int_{0}^{t}
\left\lVert
{X}^{d}_{K,0}(\rdown{s-}_K)-
X_\infty^d(\rdown{s-}_K)
\right\rVert_{L^2(\P;\R^d)}^2 ds
\right)^\frac{1}{2}
+\frac{\kappa d^\kappa}{\sqrt{K}}
.\label{f05b}
\end{align}
Now, \eqref{eq:thm_mckv}, 
\eqref{b02b} (with $\delta\gets 0$),  the triangle inequality, and
\eqref{f03b}--\eqref{f05b} imply 
that there exists $\kappa\in (0,\infty)$ such that
for all
$d,K\in \N$, $t\in [0,T]$ we have that
\begin{align}
\left\lVert {X}^d_{K,0}(t)
-X^d_\infty(t)\right\rVert_{L^2(\P;\R^d)}\leq \kappa 
\left(
\int_{0}^{t}
\left\lVert {X}^d_{K,0}(\rdown{s-}_K)
-X^d_\infty(\rdown{s-}_K)\right\rVert_{L^2(\P;\R^d)}^2ds
\right)^\frac{1}{2}
+\frac{\kappa d^\kappa}{\sqrt{K}}
.
\end{align}
This and Gr\"onwall's lemma complete the proof of Lemma~\ref{f07}.
\end{proof}

\subsection{Truncation error} In Lemma~\ref{b04} below we establish the distance between the Euler-Maruyama approximations with truncation ($\delta>0$) and that without truncation ($\delta=0$).
\begin{lemma}[Truncation error]\label{b04}Assume Setting~\ref{s01}, i.e., the case of infinite jump-activity.
For every $d,K\in \N$, $\delta\in (0,\infty)$ let $X^{d}_{K,\delta}$
be the process in Lemma~\ref{a14}. Then there exists $\kappa\in (0,\infty)$ such that for all
$d,K\in \N$, $t\in [0,T]$, $\delta\in (0,\infty)$ we have
$\lVert X^d_{K,0}(t)
-X^d_{K,\delta}(t)
\rVert_{L^2(\P;\R^d)}\leq  \delta^{\frac{q}{2}}\kappa d^\kappa$.
\end{lemma}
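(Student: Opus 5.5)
We compare the two defining equations \eqref{b02b} for $\delta\gets 0$ and $\delta\gets\delta$. Since both processes are driven by the same $W^{d,0}$ and $N^{d,0}$, their difference splits into four terms: the drift difference, the stochastic-integral difference, the jump-integral difference over $A^d_\delta$, and the remaining jump integral of the $\delta=0$ process over the small jumps $\R^d\setminus A^d_\delta$. Throughout we abbreviate $e(s):=\lVert X^d_{K,0}(\rdown{s-}_K)-X^d_{K,\delta}(\rdown{s-}_K)\rVert_{L^2(\P;\R^d)}$.

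For the drift term we use Jensen's inequality and Lemma~\ref{a05f}\eqref{f01}, which give a bound of the form $\kappa(\int_0^t e(s)^2\,ds)^{1/2}$. For the $dW^{d,0}$-term we apply It\^o's isometry followed by Lemma~\ref{a05f}\eqref{f01}, giving $2c(\int_0^t e(s)^2\,ds)^{1/2}$. For the compensated jump integral over $A^d_\delta$ we use the isometry for $\tilde N^{d,0}$. We then enlarge the domain $A^d_\delta$ to $\R^d$, which is allowed since the integrand is nonnegative, and invoke Lemma~\ref{a05f}\eqref{f02}. This yields $\sqrt{2c}(\int_0^t e(s)^2\,ds)^{1/2}$.

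The genuinely new term is
\[
\int_0^t\int_{\R^d\setminus A^d_\delta}\E\!\left[\eta_d(x,X^d_{K,0}(\rdown{s-}_K),z)\right]\!|_{x=X^d_{K,0}(\rdown{s-}_K)}\,\tilde N^{d,0}(dz,ds).
\]
By the isometry, its squared $L^2$-norm equals the integral over $s\in[0,t]$ of
\[
\int_{\R^d\setminus A^d_\delta}\E\lVert\E[\eta_d(x,X,z)]|_{x=X}\rVert^2\,\nu^d(dz),\qquad X=X^d_{K,0}(\rdown{s-}_K).
\]
As in the proof of \eqref{a03d}, we first apply Jensen's inequality in the inner expectation and then Tonelli's theorem. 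This bounds the quantity by
\[
\int\!\!\int\int_{\R^d\setminus A^d_\delta}\lVert\eta_d(x,y,z)\rVert^2\,\nu^d(dz)\,\P_X(dy)\,\P_X(dx).
\]
Assumption \eqref{e13} controls this by $c\delta^q(cd^c+2\lVert X\rVert^2_{L^2(\P;\R^d)})$. Lemma~\ref{b01} with $\delta\gets0$ gives $\lVert X\rVert_{L^2(\P;\R^d)}\le\kappa d^\kappa$ uniformly in $K$ and $s$. Hence this term is at most $\delta^{q/2}\kappa d^\kappa$.

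Collecting the four bounds, we obtain
\[
\lVert X^d_{K,0}(t)-X^d_{K,\delta}(t)\rVert_{L^2(\P;\R^d)}\le\kappa\Bigl(\int_0^t e(s)^2\,ds\Bigr)^{1/2}+\delta^{q/2}\kappa d^\kappa .
\]
Taking the supremum over $[0,t]$ and squaring gives an integral inequality for $\sup_{u\le t}\lVert X^d_{K,0}(u)-X^d_{K,\delta}(u)\rVert^2_{L^2(\P;\R^d)}$. This supremum is finite by Lemma~\ref{a14}, so Gr\"onwall's lemma applies and the constants stay independent of $K$ and $\delta$.

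The only step needing care is the small-jump term. There we must apply \eqref{e13} inside the double integral over the product law, exactly mirroring \eqref{a03d}, so that only $L^2$-moments of $X^d_{K,0}$ enter and Lemma~\ref{b01} can be used.
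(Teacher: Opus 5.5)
Your proposal is correct and follows the paper's proof of Lemma~\ref{b04} essentially step for step. You use the same four-term splitting of $X^d_{K,0}-X^d_{K,\delta}$, Jensen's inequality and It\^o's isometry with Lemma~\ref{a05f} for the three Lipschitz terms, and It\^o's isometry, Tonelli's theorem, \eqref{e13} and Lemma~\ref{b01} for the small-jump term, followed by Gr\"onwall's lemma. Your use of Jensen's inequality when passing to the double integral over $\P_X\otimes\P_X$ in the small-jump term is slightly more careful than the paper, which writes that step as an equality although it is only an inequality.
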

\begin{proof}[Proof of Lemma~\ref{b04}]
First, Jensen's inequality and Lemma~\ref{a05f} show that there exist  $\kappa_1,\kappa_2\in (0,\infty)$ such that
for all $ d,K\in \N$, $t\in [0,T]$, $\delta\in (0,\infty)$
we have that
\begin{align}
&\biggl\lVert
\int_{0}^{t}\Bigl(
\E\! \left[\mu_d (x, X^{d}_{K,\delta}(\rdown{s-}_K) )\right]\!|_{x=X^{d}_{K,\delta}(\rdown{s-}_K)}
\nonumber\\
&\qquad\qquad\qquad
-
\E\! \left[\mu_d (x, X^{d}_{K,0}(\rdown{s-}_K) )\right]\!|_{x=X^{d}_{K,0}(\rdown{s-}_K)}\Bigr)ds\biggr\rVert_{L^2(\P;\R^d)}
\nonumber\\
&\leq \kappa_1\biggl(
\int_{0}^{t}
\biggl\lVert
\E\! \left[\mu_d (x, X^{d}_{K,\delta}(\rdown{s-}_K) )\right]\!|_{x=X^{d}_{K,\delta}(\rdown{s-}_K)}\nonumber\\
&\qquad\qquad\qquad-
\E\! \left[\mu_d (x, X^{d}_{K,0}(\rdown{s-}_K) )\right]\!|_{x=X^{d}_{K,0}(\rdown{s-}_K)}
\biggr\rVert_{L^2(\P;\R^d)}^2\,ds
\biggr)^\frac{1}{2}\nonumber\\
&\leq \kappa_2\left(\int_{0}^{t}
\left\lVert
X^{d}_{K,\delta}(\rdown{s-}_K)-
X^{d}_{K,0}(\rdown{s-}_K)
\right\rVert_{L^2(\P;\R^d)}^2 ds
\right)^\frac{1}{2}.\label{f03}
\end{align}
Next,
It\^o's isometry and Lemma~\ref{a05f} show
for all $ d,K\in \N$, $\delta\in (0,\infty)$, $t\in [0,T]$ that
\begin{align}
&\biggl\lVert
\int_{0}^{t}\biggl(
\E\! \left[\sigma_d (x, X^{d}_{K,\delta}(\rdown{s-}_K) )\right]\!|_{x=X^{d}_{K,\delta}(\rdown{s-}_K)}\nonumber\\
&\qquad\qquad\qquad-
\E\! \left[\sigma_d (x, X^{d}_{K,0}(\rdown{s-}_K) )\right]\!|_{x=X^{d}_{K,0}(\rdown{s-}_K)}\biggr)dW^{d,0}(s)\biggr\rVert_{L^2(\P;\R^d)}
\nonumber\\
&=\biggl(
\int_{0}^{t}
\biggl\lVert
\E\! \left[\sigma_d (x, X^{d}_{K,\delta}(\rdown{s-}_K) )\right]\!|_{x=X^{d}_{K,\delta}(\rdown{s-}_K)}
\nonumber\\
&\qquad\qquad\qquad-
\E\! \left[\sigma_d (x, X^{d}_{K,0}(\rdown{s-}_K) )\right]\!|_{x=X^{d}_{K,0}(\rdown{s-}_K)}
\biggr\rVert_{L^2(\P;\R^{d\times d})}^2\,ds
\biggr)^\frac{1}{2}\nonumber\\
&\leq 2c\left(\int_{0}^{t}
\left\lVert
X^{d}_{K,\delta}(\rdown{s-}_K)-
X^{d}_{K,0}(\rdown{s-}_K)
\right\rVert_{L^2(\P;\R^d)}^2 ds
\right)^\frac{1}{2}\label{f04}
\end{align}
and
\begin{align}
&\biggl\lVert
\int_{0}^{t}\int_{A^d_\delta}\biggl(
\E\! \left[\eta_d (x, X^{d}_{K,\delta}(\rdown{s-}_K),z )\right]\!|_{x=X^{d}_{K,\delta}(\rdown{s-}_K)}\nonumber\\
&\qquad\qquad\qquad-
\E\! \left[\eta_d (x, X^{d}_{K,0}(\rdown{s-}_K),z )\right]\!|_{x=X^{d}_{K,0}(\rdown{s-}_K)}\biggr)\tilde{N}^{d,0}(dz,ds)\biggr\rVert_{L^2(\P;\R^d)}
\nonumber\\
&=\biggl(
\int_{0}^{t}
\int_{A^d_\delta}
\biggl\lVert
\E\! \left[\eta_d (x, X^{d}_{K,\delta}(\rdown{s-}_K),z )\right]\!|_{x=X^{d}_{K,\delta}(\rdown{s-}_K)}
\nonumber\\&\qquad\qquad\qquad
-
\E\! \left[\eta_d (x, X^{d}_{K,0}(\rdown{s-}_K),z )\right]\!|_{x=X^{d}_{K,0}(\rdown{s-}_K)}
\biggr\rVert_{L^2(\P;\R^d)}^2\nu^d(dz)\,ds
\biggr)^\frac{1}{2}\nonumber\\
&\leq \sqrt{2c}\left(\int_{0}^{t}
\left\lVert
X^{d}_{K,\delta}(\rdown{s-}_K)-
X^{d}_{K,0}(\rdown{s-}_K)
\right\rVert_{L^2(\P;\R^d)}^2 ds
\right)^\frac{1}{2}.\label{f05}
\end{align}
Next, It\^o's isometry, Tonelli's theorem, \eqref{e13}, and Lemma~\ref{b01} show that there exists $\kappa\in (0,\infty)$ such that for all $d\in \N$, $K\in \N$, $t\in [0,T]$, $\delta\in (0,\infty)$ we have
that
\begin{align}
&
\left\lVert
\int_{0}^{t}\int_{\R^d\setminus A^d_\delta}
\E\! \left[\eta_d (x, X^d_{K,0}(\rdown{s-}_K),z )\right]\!|_{x=X^d_{K,0}(\rdown{s-}_K)}\,\tilde{N}^{d,0}(dz,ds)
\right\rVert_{L^2(\P;\R^d)}\nonumber\\
&=\left(
\int_{0}^{t}\int_{\R^d\setminus A^d_\delta}
\left\lVert
\E\! \left[\eta_d (x, X^d_{K,0}(\rdown{s-}_K),z )\right]\!|_{x=X^d_{K,0}(\rdown{s-}_K)}
\right\rVert_{L^2(\P;\R^d)}^2\nu^d(dz)\,ds\right)^\frac{1}{2}\nonumber
\\
&=\left(
\int_{0}^{t}\int_{\R^d\setminus A^d_\delta}
\int_\Omega\int_\Omega \lVert\eta_d (x,y,z)\rVert^2
\,\P_{X^d_{K,0}(\rdown{s-}_K)}(dy)\,\P_{X^d_{K,0}(\rdown{s-}_K)}(dx)\,
\nu^d(dz)\,ds\right)^\frac{1}{2}\nonumber
\\
&=\left(
\int_{0}^{t}
\int_\Omega\int_\Omega
\int_{\R^d\setminus A^d_\delta}
 \lVert\eta_d (x,y,z)\rVert^2
\nu^d(dz)
\,\P_{X^d_{K,0}(\rdown{s-}_K)}(dy)\,\P_{X^d_{K,0}(\rdown{s-}_K)}(dx)
\,ds\right)^\frac{1}{2}\nonumber
\\
&\leq \left(
\int_{0}^{t}
\int_\Omega\int_\Omega
c\delta^{q}(cd^c+\lVert x\rVert^2+\lVert y\rVert^2)\,\P_{X^d_{K,0}(\rdown{s-}_K)}(dy)\,\P_{X^d_{K,0}(\rdown{s-}_K)}(dx)
\,ds\right)^\frac{1}{2}\nonumber
\\
&\leq \delta^\frac{q}{2}\kappa d^\kappa.\label{f06}
\end{align}
Now,
 \eqref{b02b}, the triangle inequality, and
\eqref{f03}--\eqref{f06} imply 
that there exists $\kappa\in (0,\infty)$ such that
for all
$d\in \N$, $K\in \N$, $t\in [0,T]$, $\delta\in (0,\infty)$ we have that
\begin{align}
&\left\lVert X^d_{K,\delta}(t)
-X^d_{K,0}(t)\right\rVert_{L^2(\P;\R^d)}\nonumber\\&\leq \kappa
\left(
\int_{0}^{t}
\left\lVert X^d_{K,\delta}(\rdown{s-}_K)
-X^d_{K,0}(\rdown{s-}_K)\right\rVert_{L^2(\P;\R^d)}^2ds
\right)^\frac{1}{2}+
 \delta^{\frac{q}{2}}\kappa d^\kappa
.
\end{align}
This and Gr\"onwall's lemma complete the proof of Lemma~\ref{b04}.
\end{proof}
\subsection{Integrability of the approximations}We will show that in both cases (finite and infinite jump-activity) the MLP approximations defined in \eqref{s02} are $L^2$-integrable.

First of all, we formulate a substitution rule for the random variables $\rvV^{d,\theta,\delta}$ introduced in \eqref{f09}.

\begin{remark}[Substitution rule for $\rvV_s^{d,\theta,\delta}$]\label{r01b}
Note that \eqref{f09} implies that in both cases (finite and infinite jump-activity) we have
for all 
$d\in \N$,
$B\in \mathcal{B}(\R^d)$, $s\in [0,T]$, $\delta\in \Delta$, $\theta\in \Theta$ that
\begin{align}
\E \!\left[\1_B(\rvV^{d,\theta,\delta}_s)\right]=
\frac{\nu^d(B\cap A^d_\delta)}{\nu^d(A_\delta^d)}
= \frac{1}{\nu^d( A_\delta^d)}
\int_{A^d_\delta}\1_B(z)\,\nu^d(dz)
\end{align}
when $\nu^d(A^d_\delta)>0$.
This and a standard approximation argument imply that in both cases we have for all 
$d\in \N$,
$s\in [0,T]$, $\delta\in \Delta$, and all
measurable functions $f\colon \R^d\to[0,\infty)$  that
\begin{align}
\nu^d(A_\delta^d)
\E [f(\rvV^{d,\theta,\delta}_s)]= 
\int_{A_\delta^d}f(z)\,\nu^d(dz).
\end{align}
\end{remark}
\begin{lemma}\label{s04}
The following items are true in both cases (finite and infinite jump-activity).
\begin{enumerate}[(i)]
\item \label{s04a}For all $\theta\in \Theta$, $\delta\in \Delta$,
$d,K,m,k\in \N$, 
$n\in \N_0$,
$\ell\in [0,n-1]\cap\Z$, $j\in \{\ell-1,\ell\}$, $i\in \N$  we have that 
\begin{align}
&\sup_{t\in [0,T]}\Biggl[
\left\lVert
X^{d,\theta}_{n,m,K,\delta}(t)\right\rVert_{L^2(\P;\R^d)}
+\left\lVert t
\mu_d (X_{j,m,K,\delta}^{d,\theta} (\rdown{t\unif^{(\theta,n,k,\ell)}-}_K ),X_{j,m,K,\delta}^{d,(\theta,n,k,\ell)} (\rdown{t\unif^{(\theta,n,k,\ell)}-}_K ))
\right\rVert_{L^2(\P;\R^d)}\nonumber\\
&\quad +
\left\lVert
\sigma_d(X_{j,m,K,\delta}^{d,\theta} (\rdown{t-}_K ),X_{j,m,K,\delta}^{d,(\theta,n,k,\ell)} (\rdown{t-}_K ))\right\rVert_{L^2(\P;\R^{d\times d})}\nonumber\\
&\quad+
\left(
\int_{\R^d}
\left\lVert
\eta_d(X_{j,m,K,\delta}^{d,\theta} (\rdown{t-}_K ),X_{j,m,K,\delta}^{d,(\theta,n,k,\ell)} (\rdown{t-}_K ),z)\right\rVert_{L^2(\P;\R^{d})}^2\,\nu^d(dz)
\right)^\frac{1}{2}\nonumber\\
&\quad + 
\left\lVert\nu^d(A^d_\delta)
\eta_d(X_{j,m,K,\delta}^{d,\theta} (\rdown{t-}_K ),X_{j,m,K,\delta}^{d,(\theta,n,k,\ell)} (\rdown{t-}_K ),\rvV^{d,(\theta,n,k,\ell,i),\delta}_{\rdown{t}_K })\right\rVert_{L^2(\P;\R^{d})}\Biggr]<\infty.
\end{align}
\item\label{s04b} For all $\theta\in \Theta$, $\delta\in \Delta$,
$n\in \N_0$,
$d,K,m\in \N$, $t\in [0,T]$ we have that 
\begin{align}
&
X^{d,\theta}_{n,m,K,\delta}(t)=  \xi_d \1_{\N}(n)
\nonumber\\ 
&+ \sum_{\ell=0}^{n-1}\sum_{k=1}^{m^{n-\ell}}
\frac{t}{m^{n-\ell}}
\Bigl[
\mu_d (X_{\ell,m,K,\delta}^{d,\theta} (\rdown{t\unif^{(\theta,n,k,\ell)}-}_K ),X_{\ell,m,K,\delta}^{d,(\theta,n,k,\ell)} (\rdown{t\unif^{(\theta,n,k,\ell)}-}_K ))\nonumber\\
&\qquad\qquad\qquad\qquad
-\1_{\N}(\ell)
\mu_d (X_{\ell-1,m,K,\delta}^{d,\theta} (\rdown{t\unif^{(\theta,n,k,\ell)}-}_K ),X_{\ell-1,m,K,\delta}^{d,(\theta,n,k,\ell)} (\rdown{t\unif^{(\theta,n,k,\ell)}-}_K ))
\Bigr]
\nonumber\\
&+ \sum_{\ell=0}^{n-1}\sum_{k=1}^{m^{n-\ell}}\int_{0}^{t}
\frac{1}{m^{n-\ell}}
\Bigl[
\sigma_d(X_{\ell,m,K,\delta}^{d,\theta} (\rdown{s-}_K ),X_{\ell,m,K,\delta}^{d,(\theta,n,k,\ell)} (\rdown{s-}_K ))\nonumber\\
&\qquad\qquad\qquad\qquad\qquad
-\1_{\N}(\ell)
\sigma_d(X_{\ell-1,m,K,\delta}^{d,\theta} (\rdown{s-}_K ),X_{\ell-1,m,K,\delta}^{d,(\theta,n,k,\ell)} (\rdown{s-}_K ))\Bigr]
dW^{d,\theta}(s)
\nonumber\\
&+ \sum_{\ell=0}^{n-1}\frac{1}{m^{n-\ell}}\sum_{k=1}^{m^{n-\ell}}\int_{0}^{t}\int_{A_\delta^d}
\Bigl[\eta_d(X_{\ell,m,K,\delta}^{d,\theta} (\rdown{s-}_K ),X_{\ell,m,K,\delta}^{d,(\theta,n,k,\ell)} (\rdown{s-}_K ),z)\nonumber\\
&\qquad\qquad\qquad\qquad\qquad\qquad
-\1_{\N}(\ell)
\eta_d(X_{\ell-1,m,K,\delta}^{d,\theta} (\rdown{s-}_K ),X_{\ell-1,m,K,\delta}^{d,(\theta,n,k,\ell)} (\rdown{s-}_K ),z)\Bigr]
\tilde{N}^{d,\theta}(dz,ds)\nonumber\\
&- \sum_{\ell=0}^{n-1}\frac{1}{m^{n-\ell}}\sum_{k=1}^{m^{n-\ell}}
\frac{1}{\mathcal{M}^d_\delta}
\sum_{i=1}^{\mathcal{M}^d_\delta}\int_{0}^{t}\biggl[
\nu^d(A^d_\delta)
\Bigl[\eta_d(X_{\ell,m,K,\delta}^{d,\theta} (\rdown{s-}_K ),X_{\ell,m,K,\delta}^{d,(\theta,n,k,\ell)} (\rdown{s-}_K ),\rvV^{d,(\theta,n,k,\ell,i),\delta}_{\rdown{s}_K })\nonumber\\
&\qquad\qquad\qquad\qquad
-\1_\N(\ell)
\eta_d(X_{\ell-1,m,K,\delta}^{d,\theta} (\rdown{s-}_K ),X_{\ell-1,m,K,\delta}^{d,(\theta,n,k,\ell)} (\rdown{s-}_K ),\rvV^{d,(\theta,n,k,\ell,i),\delta}_{\rdown{s}_K })\Bigr]\nonumber\\
&\qquad\qquad\qquad
-\int_{A^d_\delta}
\Bigl[\eta_d(X_{\ell,m,K,\delta}^{d,\theta} (\rdown{s-}_K ),X_{\ell,m,K,\delta}^{d,(\theta,n,k,\ell)} (\rdown{s-}_K ),z)\nonumber\\
&\qquad\qquad\qquad\qquad\qquad
-\1_\N(\ell)
\eta_d(X_{\ell-1,m,K,\delta}^{d,\theta} (\rdown{s-}_K ),X_{\ell-1,m,K,\delta}^{d,(\theta,n,k,\ell)} (\rdown{s-}_K ),z)\Bigr]\nu^d(dz)\biggr]\,ds,\label{s04c}
\end{align}
where we used the convention that $\sum_{\ell=0}^{-1}(\ldots)=0$.
\end{enumerate}
\end{lemma}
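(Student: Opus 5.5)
We argue by induction on $n\in\N_0$, proving item~\eqref{s04a} first and then obtaining item~\eqref{s04b} as an algebraic rearrangement once integrability is known. Since $X^{d,\theta}_{0,m,K,\delta}=X^{d,\theta}_{-1,m,K,\delta}=0$, the base case $n=0$ is vacuous for the coefficient terms (there is no $\ell\in[0,-1]$), and the first summand is trivially finite. The inductive hypothesis is that $\sup_{t\in[0,T]}\lVert X^{d,\theta}_{j,m,K,\delta}(t)\rVert_{L^2(\P;\R^d)}<\infty$ for all $j\le n-1$ and all $\theta\in\Theta$.

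\textbf{Coefficient terms.} Fix $\ell\le n-1$ and $j\in\{\ell-1,\ell\}$. The random variables $X^{d,\theta}_{j,m,K,\delta}(\rdown{s-}_K)$ and $X^{d,(\theta,n,k,\ell)}_{j,m,K,\delta}(\rdown{s-}_K)$ only take finitely many time values $\rdown{s-}_K\in\{0,T/K,\ldots\}$. They are built from disjoint index families, hence independent of each other, and also independent of $\unif^{(\theta,n,k,\ell)}$ and of $\rvV^{d,(\theta,n,k,\ell,i),\delta}$.
\begin{itemize}
\item For $\mu_d$ and $\sigma_d$, the linear growth bound \eqref{a02b} (from \eqref{e11} and \eqref{e12}) gives an $L^2$-bound by $cd^c+c\lVert X\rVert_{L^2}+c\lVert Y\rVert_{L^2}$, uniformly in $t$. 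For the uniform time $t\unif$, we bound by the maximum over the finitely many grid values.
\item For the $\nu^d$-integral of $\eta_d$, we use Tonelli and \eqref{s02a}.
\item For the $\rvV$-term, we use independence together with Remark~\ref{r01b}. This gives $\E\lVert\nu^d(A^d_\delta)\eta_d(x,y,\rvV)\rVert^2=\nu^d(A^d_\delta)\int_{A^d_\delta}\lVert\eta_d(x,y,z)\rVert^2\nu^d(dz)$, which is finite because $\nu^d(A^d_\delta)<\infty$ by \eqref{p04b} in the infinite case, or by assumption in the finite case. The degenerate Dirac case has $\nu^d(A^d_\delta)=0$, so that term vanishes.
\end{itemize}

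\textbf{$L^2$-bound for $X^{d,\theta}_{n,m,K,\delta}(t)$.} We bound each summand of \eqref{s02} in turn. The drift is handled by the previous step. The $dW$-integral is controlled by It\^o's isometry with the $\sigma_d$ bound, noting that the integrand is predictable as a left-limit of adapted grid values. For the $N^{d,\theta}$-integral, we write $N=\tilde N+\nu^d\,ds$ on $A^d_\delta$. The compensated part is controlled by the isometry and the $\nu^d$-integral bound. The compensator part $\int_0^t\int_{A^d_\delta}\eta\,\nu^d(dz)\,ds$ is controlled by Cauchy--Schwarz, $\int_{A^d_\delta}\lVert\eta\rVert\,d\nu^d\le \nu^d(A^d_\delta)^{1/2}(\int\lVert\eta\rVert^2d\nu^d)^{1/2}$, again using $\nu^d(A^d_\delta)<\infty$. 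The $\rvV$-sum is finite by the last bullet. All these bounds are uniform in $t$, which closes the induction.

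\textbf{Item~\eqref{s04b}.} Given the integrability just established, the Poisson integral over $A^d_\delta$ splits legitimately as $\int\!\int\ldots N^{d,\theta}=\int\!\int\ldots\tilde N^{d,\theta}+\int\!\int\ldots\nu^d(dz)\,ds$. Substituting this into \eqref{s02} and regrouping the compensator integral with the $\rvV$-sum yields \eqref{s04c}. The factor $\xi_d\1_\N(n)$ accounts for $n=0$.

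\textbf{Main obstacle.} The delicate point is justifying the integrals themselves: measurability and predictability of the integrands that mix independent copies, and the splitting of the $N$-integral. Finiteness of $\nu^d(A^d_\delta)$ together with the $L^2(\P\otimes\nu^d)$ bounds resolves this; we would first check it on the finite grid where the integrands are piecewise constant in time.
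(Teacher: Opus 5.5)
Your proposal is correct and follows essentially the paper's argument. Both proceed by induction on $n$ and obtain $L^2$-bounds for the $\mu_d$-, $\sigma_d$-, $\eta_d$- and $\rvV$-terms from the Lipschitz/growth conditions, independence, Remark~\ref{r01b} and $\nu^d(A^d_\delta)<\infty$. Both then use the splitting $N^{d,\theta}=\tilde N^{d,\theta}+\nu^d(dz)\,ds$ on $A^d_\delta$ to get \eqref{s04c} and the bound on $X^{d,\theta}_{n,m,K,\delta}$.

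The differences are cosmetic. You bound the $t\unif$-drift term by a maximum over the finitely many grid values, whereas the paper uses the disintegration identity $\sqrt{t}\,(\int_0^t\cdots\,ds)^{1/2}$. You also bound $X^{d,\theta}_{n,m,K,\delta}(t)$ directly from \eqref{s02}, whereas the paper first establishes \eqref{s04c} and bounds from there.
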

\begin{proof}[Proof of Lemma~\ref{s04}]We prove \eqref{s04a} and \eqref{s04b} by induction on $n\in \N_0$. The base case is clear due to \eqref{s05}. Assume that \eqref{s04a} and \eqref{s04b} are true up to $n-1\in \N_0$. We need to show that they are true for $n\in \N_0$.
Observe that we only need to explain that all the integrals in \eqref{s04b} are well-defined in both cases as then \eqref{s04a} follows from \eqref{s03b}.
By the disintegration theorem, 
the distribution and independence properties, \eqref{u01b},
\eqref{e11}, \eqref{e11b}, the triangle inequality, \eqref{e12}, and the induction hypothesis we have in both cases for all $\theta\in \Theta$, $\delta\in \Delta$,
$d,K,m\in \N$, $\ell\in [0,n-1]\cap\Z$, $j\in \{\ell-1,\ell\}$   that 
\begin{align}
&\sup_{t\in [0,T]}
\left\lVert t
\mu_d (X_{j,m,K,\delta}^{d,\theta} (\rdown{t\unif^{(\theta,n,k,\ell)}-}_K ),X_{j,m,K,\delta}^{d,(\theta,n,k,\ell)} (\rdown{t\unif^{(\theta,n,k,\ell)}-}_K ))
\right\rVert_{L^2(\P;\R^d)}\nonumber\\
&
=\sup_{t\in [0,T]}\left[\sqrt{t}\left(\int_0^t\left\lVert
\mu_d 
(X_{j,m,K,\delta}^{d,\theta} (\rdown{s-}_K ),X_{j,m,K,\delta}^{d,(\theta,n,k,\ell)} (\rdown{s-}_K ))\right\rVert^2_{L^2(\P;\R^d)}ds
\right)^\frac{1}{2}\right]\nonumber\\
&\leq \sqrt{T}\left[\kappa d^\kappa + \kappa\sup_{t\in [0,T]}\left\lVert
X^{d,0}_{j,m,K,\delta}(t)
\right\rVert_{L^2(\P;\R^d)}\right]<\infty,\label{s06}
\end{align}
\begin{align}
&\sup_{t\in [0,T]}
\left\lVert
\sigma_d(X_{j,m,K,\delta}^{d,\theta} (\rdown{t-}_K ),X_{j,m,K,\delta}^{d,(\theta,n,k,\ell)} (\rdown{t-}_K ))\right\rVert_{L^2(\P;\R^{d\times d})}\nonumber\\&\leq cd^c+ 2c\sup_{t\in [0,T]}\left\lVert
X^{d,0}_{j,m,K,\delta}(t)
\right\rVert_{L^2(\P;\R^d)}<\infty,
\end{align} and
\begin{align}
&\sup_{t\in [0,T]}\left(
\int_{\R^d}
\left\lVert
\eta_d(X_{j,m,K,\delta}^{d,\theta} (\rdown{t-}_K ),X_{j,m,K,\delta}^{d,(\theta,n,k,\ell)} (\rdown{t-}_K ),z)\right\rVert_{L^2(\P;\R^{d})}^2\,\nu^d(dz)
\right)^\frac{1}{2}
\nonumber\\&\leq cd^c+ 2c\sup_{t\in [0,T]}\left\lVert
X^{d,0}_{j,m,K,\delta}(t)
\right\rVert_{L^2(\P;\R^d)}<\infty.
\end{align}
Next, the disintegration theorem, 
the distribution and independence properties, \eqref{f09} (see also Remark~\ref{r01b}),
\eqref{e11b}, the triangle inequality, and \eqref{e12} show in both cases that for all $\theta\in \Theta$, $\delta\in \Delta$,
$d,K,m,i\in \N$, $\ell\in [0,n-1]\cap\Z$, $j\in \{\ell-1,\ell\}$ we have that
\begin{align}
&\sup_{t\in [0,T]}
\left(
\left\lVert\nu^d(A^d_\delta)
\eta_d(X_{j,m,K,\delta}^{d,\theta} (\rdown{t-}_K ),X_{j,m,K,\delta}^{d,(\theta,n,k,\ell)} (\rdown{t-}_K ),\rvV^{d,(\theta,n,k,\ell,i),\delta}_{\rdown{t}_K })\right\rVert_{L^2(\P;\R^{d})}^2
\right)^\frac{1}{2}\nonumber\\
&=\sup_{t\in [0,T]}
\left(
\int_{A^d_\delta}\nu^d(A^d_\delta)\left\lVert
\eta_d(X_{j,m,K,\delta}^{d,\theta} (\rdown{t-}_K ),X_{j,m,K,\delta}^{d,(\theta,n,k,\ell)} (\rdown{t-}_K ),z)\right\rVert_{L^2(\P;\R^{d})}^2\nu^d(dz)
\right)^\frac{1}{2}\nonumber\\
&\leq \sqrt{\nu^d(A^d_\delta)}\sup_{t\in [0,T]}\left(cd^c+\sqrt{2c}
\sup_{t\in [0,T]}\left\lVert
X^{d,0}_{j,m,K,\delta}(t)
\right\rVert_{L^2(\P;\R^d)}
\right)<\infty.\label{s07}
\end{align}
Combining \eqref{s06}--\eqref{s07} implies that all the integrals in \eqref{s04b} are well-defined and \eqref{s04b} holds. The triangle inequality, \eqref{s04b}, and \eqref{s06}--\eqref{s07} imply that $\sup_{t\in [0,T]}
\lVert
X^{d,\theta}_{n,m,K,\delta}(t)\rVert_{L^2(\P;\R^d)}<\infty$. The proof of Lemma~\ref{s04} is thus completed.
\end{proof}

\subsection{Statistical error}We will estimate the statistical error of the MLP approximations.
\begin{lemma}[Conditional variance of the first sum]\label{k03}
In both cases (finite and infinite jump-activity) there exists $\kappa\in (0,\infty)$ such that for all 
$d,n,m,K\in \N$, $\delta\in \Delta$, $t\in [0,T]$ we have
\begin{align}
&\Biggl \lVert\Biggl(\V_{\mathcal{G}^{d}_{n-1,m,K,\delta}}
\Biggl[\sum_{\ell=0}^{n-1}\sum_{k=1}^{m^{n-\ell}}
\frac{t}{m^{n-\ell}}
\Bigl[
\mu_d (X_{\ell,m,K,\delta}^{d,0} (\rdown{t\unif^{(0,n,k,\ell)}-}_K ),X_{\ell,m,K,\delta}^{d,(0,n,k,\ell)} (\rdown{t\unif^{(0,n,k,\ell)}-}_K ))\nonumber\\
&\qquad\qquad
-\1_{\N}(\ell)
\mu_d (X_{\ell-1,m,K,\delta}^{d,0} (\rdown{t\unif^{(0,n,k,\ell)}-}_K ),X_{\ell-1,m,K,\delta}^{d,(0,n,k,\ell)} (\rdown{t\unif^{(0,n,k,\ell)}-}_K ))
\Bigr]\Biggr] \Biggr)^\frac{1}{2}\Biggr \rVert_{L^2(\P;\R^d)}
\nonumber\\
&\leq \frac{\kappa d^\kappa}{\sqrt{m^n}}
+\sum_{\ell=1}^{n-1}\frac{\kappa}{\sqrt{m^{n-\ell}}}
\left(\int_{0}^{t}\left\lVert X_{\ell,m,K,\delta}^{d,0} (\rdown{s-}_K ) 
-X_{\ell-1,m,K,\delta}^{d,0} (\rdown{s-}_K )
\right\rVert_{L^2(\P;\R^d)}^2 ds \right)^\frac{1}{2}.
\end{align}
\end{lemma}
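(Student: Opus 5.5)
Conditionally on $\mathcal{G}^d_{n-1,m,K,\delta}$, the summands indexed by $(\ell,k)$ are independent: each depends on $\unif^{(0,n,k,\ell)}$ and on $X^{d,(0,n,k,\ell)}_{\ell,m,K,\delta}$, $X^{d,(0,n,k,\ell)}_{\ell-1,m,K,\delta}$, which are independent of $\mathcal{G}^d_{n-1,m,K,\delta}$ and of one another for distinct $(k,\ell)$. The processes $X^{d,0}_{\ell,m,K,\delta}$, $X^{d,0}_{\ell-1,m,K,\delta}$ with $\ell\le n-1$ are $\mathcal{G}^d_{n-1,m,K,\delta}$-measurable. The plan is therefore: (1) use this conditional independence so that the conditional variance of the double sum equals the sum of the conditional variances; (2) bound each conditional variance by the conditional second moment; (3) evaluate that second moment via the disintegration theorem; (4) take $L^2(\P)$-norms and apply the triangle inequality to split by $\ell$.

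For fixed $(\ell,k)$, write $Y_{\ell,k}$ for the bracketed difference multiplied by $t/m^{n-\ell}$. Then
\[\V_{\mathcal{G}}\Bigl[\sum_{\ell,k}Y_{\ell,k}\Bigr]=\sum_{\ell=0}^{n-1}m^{n-\ell}\,\V_{\mathcal{G}}[Y_{\ell,1}]\le\sum_{\ell=0}^{n-1}\frac{t^2}{m^{n-\ell}}\E_{\mathcal{G}}\bigl[\lVert\text{bracket}\rVert^2\bigr],\]
using that the summands over $k$ are identically distributed given $\mathcal{G}$. Since $\unif$ is uniform and independent, the disintegration theorem gives
\[t^2\E_{\mathcal{G}}[\lVert\cdot\rVert^2]=t\int_0^t\E\bigl[\lVert \mu_d(x,X^{d,(0,n,1,\ell)}_{\ell}(\rdown{s-}_K))-\mu_d(y,X^{d,(0,n,1,\ell)}_{\ell-1}(\rdown{s-}_K))\rVert^2\bigr]\big|_{x=X^{d,0}_\ell(\rdown{s-}_K),\,y=X^{d,0}_{\ell-1}(\rdown{s-}_K)}\,ds .\]
Next apply \eqref{e11}, i.e.\ $\lVert\cdot\rVert\le c\lVert x-y\rVert+c\lVert\tilde x-\tilde y\rVert$. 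Taking the outer expectation, and using that $X^{d,(0,n,1,\ell)}_j$ has the same law as $X^{d,0}_j$, the term for $\ell\ge1$ is bounded by
\[\frac{8c^2T}{m^{n-\ell}}\int_0^t\lVert X^{d,0}_{\ell}(\rdown{s-}_K)-X^{d,0}_{\ell-1}(\rdown{s-}_K)\rVert^2_{L^2}\,ds .\]
For $\ell=0$ we have $X_0=0$, so the bracket equals $\mu_d(0,0)$, which gives $t^2\lVert\mu_d(0,0)\rVert^2/m^n\le T^2c^2d^{2c}/m^n$ by \eqref{e12}.

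Finally, the $L^2(\P)$-norm of the square root of a sum is at most $\bigl(\E\sum\bigr)^{1/2}\le\sum(\E\cdot)^{1/2}$ by Jensen and $\sqrt{a+b}\le\sqrt a+\sqrt b$. This produces exactly the claimed form, with $\kappa$ depending only on $c$ and $T$. The main obstacle is justifying step (1) rigorously: one has to verify the conditional independence and the identical conditional laws across $k$ via the independence structure of the indices $(0,n,k,\ell)$, and then apply the disintegration (freezing) lemma with finite second moments. The latter is guaranteed by Lemma~\ref{s04}.
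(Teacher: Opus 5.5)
Your proposal is correct and follows essentially the same route as the paper, which defers to the analogous estimate in \cite{Neufeld2025} and which mirrors the proofs of Lemmas~\ref{k06} and~\ref{s09}: Bienaym\'e's identity given $\mathcal{G}^d_{n-1,m,K,\delta}$, bounding the conditional variance by the conditional second moment, disintegration over the uniform time $t\unif^{(0,n,k,\ell)}$, the Lipschitz bound \eqref{e11}, and equality in law of the independent copies. Two small precisions: you need the \emph{joint} law of $(X^{d,(0,n,1,\ell)}_{\ell,m,K,\delta},X^{d,(0,n,1,\ell)}_{\ell-1,m,K,\delta})$ to coincide with that of $(X^{d,0}_{\ell,m,K,\delta},X^{d,0}_{\ell-1,m,K,\delta})$, not only equality of the individual laws, and this holds by construction. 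Also, the $\ell=0$ summand equals $\frac{t}{m^n}\mu_d(0,0)$, which is deterministic, so its conditional variance is in fact zero and your bound $Tcd^c/\sqrt{m^n}$ for it is valid but not needed.
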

\begin{proof}[Proof of Lemma~\ref{k03}]
The estimate is similar to (43) in Step 5 in the proof of Theorem 1 in \cite{Neufeld2025}.
\end{proof}

\begin{lemma}[Conditional variance of the second sum]\label{k04}
In both cases (finite and infinite jump-activity) there exists $\kappa\in (0,\infty)$ such that for all 
$d,n,m,K\in \N$, $\delta\in \Delta$, $t\in [0,T]$ we have
\begin{align}
&\Biggl \lVert\Biggl(\V_{\mathcal{G}^d_{n-1,m,K,\delta}}\Biggl[\sum_{\ell=0}^{n-1}\sum_{k=1}^{m^{n-\ell}}\int_{0}^{t}
\frac{1}{m^{n-\ell}}
\Bigl[
\sigma_d(X_{\ell,m,K,\delta}^{d,0} (\rdown{s-}_K ),X_{\ell,m,K,\delta}^{d,(0,n,k,\ell)} (\rdown{s-}_K ))\nonumber\\
&\qquad\qquad
-\1_{\N}(\ell)
\sigma_d(X_{\ell-1,m,K,\delta}^{d,0} (\rdown{s-}_K ),X_{\ell-1,m,K,\delta}^{d,(0,n,k,\ell)} (\rdown{s-}_K ))\Bigr]
dW^{d,0}(s)\Biggr] \Biggr)^\frac{1}{2}\Biggr \rVert_{L^2(\P;\R^d)}
\nonumber\\
&\leq \frac{\kappa d^\kappa}{\sqrt{m^n}}
+\sum_{\ell=1}^{n-1}\frac{\kappa}{\sqrt{m^{n-\ell}}}
\left(\int_{0}^{t}\left\lVert X_{\ell,m,K,\delta}^{d,0} (\rdown{s-}_K ) 
-X_{\ell-1,m,K,\delta}^{d,0} (\rdown{s-}_K )
\right\rVert_{L^2(\P;\R^d)}^2 ds \right)^\frac{1}{2}.
\end{align}
\end{lemma}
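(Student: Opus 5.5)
Conditionally on $\mathcal{G}:=\mathcal{G}^d_{n-1,m,K,\delta}$, the processes $X^{d,0}_{\ell,m,K,\delta}$ and the path $W^{d,0}$ are fixed, while the copies $X^{d,(0,n,k,\ell)}_{\ell,m,K,\delta}$ and $X^{d,(0,n,k,\ell)}_{\ell-1,m,K,\delta}$, $k\in\{1,\ldots,m^{n-\ell}\}$, $\ell\in[0,n-1]\cap\Z$, are independent of $\mathcal{G}$ and of each other. They are built from the families indexed by $(0,n,k,\ell)$, which are independent of those indexed by $0$. The plan is to exploit this conditional independence. Since the diffusion coefficient is piecewise constant on the grid, write the stochastic integral as a finite sum
\begin{align*}
\sum_{j}\frac{1}{m^{n-\ell}}\bigl[\sigma_d(\ldots)-\1_\N(\ell)\sigma_d(\ldots)\bigr]\bigl(W^{d,0}(t_{j+1}\wedge t)-W^{d,0}(t_j\wedge t)\bigr).
\end{align*}
This expresses the whole expression as a sum over $(\ell,k)$ of conditionally independent summands. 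Each summand is a measurable functional of the $(0,n,k,\ell)$-copies, with the $\mathcal{G}$-measurable increments of $W^{d,0}$ and the processes $X^{d,0}_\cdot$ plugged in. The conditional variance of a sum of conditionally independent terms is the sum of the conditional variances. Each conditional variance is bounded by the conditional second moment, and the $m^{n-\ell}$ identically distributed $k$-terms give the factor $m^{-(n-\ell)}$.

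Next I take expectations and compute the resulting second moment. Define the deterministic matrix function
\begin{align*}
F_\ell(s,x,x'):=\E\bigl[\lVert\sigma_d(x,X^{d,0}_{\ell,m,K,\delta}(\rdown{s-}_K))-\1_\N(\ell)\sigma_d(x',X^{d,0}_{\ell-1,m,K,\delta}(\rdown{s-}_K))\rVert^2\bigr],
\end{align*}
using that the copies have the same law as the index-$0$ processes. By the disintegration theorem, the $L^2$ norm of $\bigl(\E_{\mathcal{G}}\lVert\int_0^t(\ldots)\,dW^{d,0}\rVert^2\bigr)^{1/2}$ equals the unconditional second moment of the stochastic integral of $\sigma_d(X^{d,0}_\ell,\tilde X_\ell)-\ldots$, where $\tilde X$ is an independent copy. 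This in turn equals $\int_0^t\E[F_\ell(s,X^{d,0}_\ell(\rdown{s-}_K),X^{d,0}_{\ell-1}(\rdown{s-}_K))]\,ds$ by It\^o's isometry, because the integrand is predictable with respect to a filtration enlarged by the independent copies, so $W^{d,0}$ remains a Brownian motion. Rigorously justifying that $W^{d,0}$ stays a Brownian motion in this enlarged filtration is the main technical obstacle. I would handle it via the independence of $W^{d,0}$ from the $(0,n,k,\ell)$ families and the fact that $X^{d,0}_\ell$ is adapted to the filtration of $(W^{d,0},N^{d,0})$ augmented by the independent families. The finiteness of all moments is guaranteed by Lemma~\ref{s04}.

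For $\ell=0$, the bound \eqref{e11}, \eqref{e12} together with Lemma~\ref{s04} is not polynomial in $d$, so instead I would need a moment bound for $X^{d,0}_{0}$. Here $X^{d,0}_{0}=0$ by \eqref{s05}, giving $\lVert\sigma_d(0,0)\rVert\le cd^c$, which contributes $cd^c\sqrt{T}/\sqrt{m^n}$. For $\ell\ge1$, the Lipschitz condition \eqref{e11} gives
\begin{align*}
F_\ell\le 2c^2\lVert x-x'\rVert^2+2c^2\lVert X_\ell-X_{\ell-1}\rVert^2_{L^2},
\end{align*}
so the $\ell$-th term is at most $2c\,m^{-(n-\ell)/2}\bigl(\int_0^t\lVert X^{d,0}_{\ell}(\rdown{s-}_K)-X^{d,0}_{\ell-1}(\rdown{s-}_K)\rVert^2_{L^2}ds\bigr)^{1/2}$. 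Finally, the total conditional variance is a sum over $\ell$. Taking the square root and using $\sqrt{\sum a_\ell}\le\sum\sqrt{a_\ell}$ inside the $L^2$ norm, with Jensen's inequality to move the expectation, yields the claimed bound with $\kappa=\max\{2c,c\sqrt T,c\}$.
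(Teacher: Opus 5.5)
Your proposal is correct and follows essentially the same route as the paper. The paper's proof defers to the Brownian-case estimate in the earlier work and mirrors the written-out proof of Lemma~\ref{k06}: a conditional Bienaym\'e identity over the i.i.d.\ copies indexed by $(0,n,k,\ell)$, the conditional variance bounded by the conditional second moment, the tower property, It\^o's isometry, and the Lipschitz condition \eqref{e11}, with $X^{d,0}_{0,m,K,\delta}=0$ and \eqref{e12} giving the $\ell=0$ term. The only differences are cosmetic: you apply Bienaym\'e jointly over all pairs $(\ell,k)$ rather than first using the triangle inequality over $\ell$, which is legitimate since all these summands are conditionally independent given $\mathcal{G}^d_{n-1,m,K,\delta}$, and you make the conditional-independence and isometry steps explicit via the finite-sum representation of the grid-constant stochastic integral, a point of rigor the paper leaves implicit.
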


\begin{proof}[Proof of Lemma~\ref{k04}]
The estimate is similar to (44) in Step 5 in the proof of Theorem 1 in \cite{Neufeld2025}: we use the triangle inequality, Bienaym\'e's identity, It\^o's isometry, and the Lipschitz condition of $\sigma_d$ (see \eqref{e11}).
\end{proof}

\begin{lemma}[Conditional variance of the third sum]\label{k06}In both cases (finite and infinite jump-activity) there exists $\kappa\in (0,\infty)$ such that  for all 
$d,n,m,K\in \N$, $\delta\in \Delta$,  $t\in [0,T]$ we have
\begin{align}
&\Biggl \lVert\Biggl(\V_{\mathcal{G}^d_{n-1,m,K,\delta}}\Biggl[ \sum_{\ell=0}^{n-1}\frac{1}{m^{n-\ell}}\sum_{k=1}^{m^{n-\ell}}\int_{0}^{t}\int_{A^d_\delta}
\Bigl[\eta_d(X_{\ell,m,K,\delta}^{d,0} (\rdown{s-}_K ),X_{\ell,m,K,\delta}^{d,(0,n,k,\ell)} (\rdown{s-}_K ),z)\nonumber\\
&\qquad\qquad
-\1_{\N}(\ell)
\eta_d(X_{\ell-1,m,K,\delta}^{d,0} (\rdown{s-}_K ),X_{\ell-1,m,K,\delta}^{d,(0,n,k,\ell)} (\rdown{s-}_K ),z)\Bigr]
\tilde{N}^{d,0}(dz,ds)\Biggr] \Biggr)^\frac{1}{2}\Biggr \rVert_{L^2(\P;\R^d)}\nonumber\\
&\leq  \frac{\kappa d^\kappa}{\sqrt{m^n}}
+\sum_{\ell=1}^{n-1}\frac{\kappa}{\sqrt{m^{n-\ell}}}
\left(\int_{0}^{t}\left\lVert X_{\ell,m,K,\delta}^{d,0} (\rdown{s-}_K ) 
-X_{\ell-1,m,K,\delta}^{d,0} (\rdown{s-}_K )
\right\rVert_{L^2(\P;\R^d)}^2 ds\right)^\frac{1}{2}.
\end{align}
\end{lemma}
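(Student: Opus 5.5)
Conditionally on $\mathcal{G}^d_{n-1,m,K,\delta}$, the integrands are random only through the independent copies $X^{d,(0,n,k,\ell)}_{j,m,K,\delta}$, and the compensated integral against $\tilde N^{d,0}$ is taken with respect to a Poisson random measure that is $\mathcal{G}^d_{n-1,m,K,\delta}$-measurable. The first step is therefore to reverse the order: I would regard the whole quantity as a stochastic integral against $\tilde N^{d,0}$ whose integrand is an average of i.i.d. (given $\mathcal{G}$) random fields indexed by $k$. I will avoid computing the conditional variance directly and use the elementary bound $\V_{\mathcal{G}}[Y]\le \E_{\mathcal{G}}[\lVert Y-Z\rVert^2]$ for any $\mathcal{G}$-measurable $Z$. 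The natural choice of $Z$ is the same integral with the inner copies integrated out, i.e.\ with integrand $\E[\eta_d(x,X^{d,(0,n,k,\ell)}_{j,m,K,\delta}(\rdown{s-}_K),z)]|_{x=X^{d,0}_{j,m,K,\delta}(\rdown{s-}_K)}$. This choice requires independence of the copies from $(W^{d,0},N^{d,0},X^{d,0}_{\cdot})$ and the identical-distribution property; both hold by the index structure of $\Theta$.

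Next, by the triangle inequality in $L^2$ applied to $(\V_{\mathcal{G}}[\cdot])^{1/2}$ (Minkowski for the conditional $L^2$ norm), I split the sum over $\ell$ and treat each level separately. For fixed $\ell$, the difference $Y_\ell-Z_\ell$ equals $\frac{1}{m^{n-\ell}}\sum_k\int_0^t\int_{A^d_\delta}D_k(s,z)\,\tilde N^{d,0}(dz,ds)$, where the $D_k$ are centered given $\mathcal{G}$ and independent across $k$. The key structural fact is that the integrands are piecewise constant in time on the grid $\{jT/K\}$ with predictable left values. The integral is therefore a finite sum over grid intervals of $\sum_k D_k(\rdown{s}_K,\cdot)$ integrated against the Poisson increments on that interval. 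The paper hints at this: the integrals are discrete sums of conditionally independent variables. Conditioning on $N^{d,0}$ (part of $\mathcal{G}$), the $D_k$ remain independent and centered, so the cross terms vanish and Bienaymé's identity gives
\begin{align*}
\E\Bigl[\Bigl\lVert \tfrac{1}{m^{n-\ell}}\textstyle\sum_k\int\!\!\int D_k\,d\tilde N\Bigr\rVert^2\Bigr]
=\tfrac{1}{m^{n-\ell}}\E\Bigl[\Bigl\lVert\int_0^t\!\!\int_{A^d_\delta} D_1\,d\tilde N^{d,0}\Bigr\rVert^2\Bigr].
\end{align*}
Taking full expectation and using the isometry for compensated Poisson integrals, this is at most $\frac{1}{m^{n-\ell}}\int_0^t\int_{A^d_\delta}\E\lVert D_1(s,z)\rVert^2\nu^d(dz)\,ds$. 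Jensen then bounds the centered term by the uncentered integrand itself.

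Finally, I apply the Lipschitz condition \eqref{e11b}, together with the fact that $X^{d,(0,n,k,\ell)}_j$ has the same law as $X^{d,0}_j$. For $\ell\ge1$ this bounds the level contribution by $\frac{\sqrt{2c}}{\sqrt{m^{n-\ell}}}\bigl(\int_0^t\lVert X^{d,0}_{\ell}(\rdown{s-}_K)-X^{d,0}_{\ell-1}(\rdown{s-}_K)\rVert^2_{L^2}ds\bigr)^{1/2}$. For $\ell=0$, where $X_0=0$, it gives $\frac{\sqrt{T}}{\sqrt{m^n}}\bigl(\int\lVert\eta_d(0,0,z)\rVert^2\nu^d(dz)\bigr)^{1/2}\le\frac{\sqrt T\,cd^c}{\sqrt{m^n}}$ by \eqref{e12}. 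The main obstacle is the rigorous justification of the Bienaymé/isometry step with a $\mathcal{G}$-measurable random measure. For this I would use the explicit finite-sum representation on each grid cell: there are finitely many jumps in $A^d_\delta$, since $\nu^d(A^d_\delta)<\infty$ by \eqref{p04b}. I would combine it with a disintegration argument, freezing $(W^{d,0},N^{d,0},X^{d,0})$ and integrating over the independent copies. Lemma~\ref{s04} supplies the integrability needed.
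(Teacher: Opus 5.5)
Your proposal is correct and follows essentially the same route as the paper: you split over the levels $\ell$ by the triangle inequality, apply Bienaym\'e's identity to the conditionally i.i.d.\ copies indexed by $k$, pass to the unconditional second moment via the tower property, use It\^o's isometry for the compensated Poisson integral, and finish with \eqref{e11b} and \eqref{e12}. The only difference is cosmetic: you center at the conditional mean $Z$ and remove the centering by Jensen after the isometry (for $\ell=0$ your centered term is in fact zero), whereas the paper bounds $\V_{\mathcal{G}}[\cdot]\le\E_{\mathcal{G}}[\lVert\cdot\rVert^2]$ directly before the isometry; also note that the conditional independence across $k$ holds given all of $\mathcal{G}^d_{n-1,m,K,\delta}$, not given $N^{d,0}$ alone, since the $D_k$ share $X^{d,0}_{\cdot}$.
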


\begin{proof}[Proof of Lemma~\ref{k06}]First, the triangle inequality, Bienaym\'e's identity, the tower property, It\^o's isometry, \eqref{e11b}, and \eqref{e12} show that in both cases we have for all 
$d,n,m,K\in \N$, $\delta\in \Delta$, $t\in [0,T]$ that
\begin{align}
&\Biggl \lVert
\Biggl(\V_{\mathcal{G}^d_{n-1,m,K,\delta}}\Biggl[ \sum_{\ell=0}^{n-1}\frac{1}{m^{n-\ell}}\sum_{k=1}^{m^{n-\ell}}\int_{0}^{t}\int_{A^d_\delta}
\Bigl[\eta_d(X_{\ell,m,K,\delta}^{d,0} (\rdown{s-}_K ),X_{\ell,m,K,\delta}^{d,(0,n,k,\ell)} (\rdown{s-}_K ),z)\nonumber\\
&\qquad\qquad
-\1_{\N}(\ell)
\eta_d(X_{\ell-1,m,K,\delta}^{d,0} (\rdown{s-}_K ),X_{\ell-1,m,K,\delta}^{d,(0,n,k,\ell)} (\rdown{s-}_K ),z)\Bigr]
\tilde{N}^{d,0}(dz,ds)\Biggr]\Biggr)^\frac{1}{2}\Biggr \rVert_{L^2(\P;\R^d)}
\nonumber\\
&\leq 
\sum_{\ell=0}^{n-1}
\frac{1}{\sqrt{m^{n-\ell}}}\Biggl \lVert\Biggl(\V_{\mathcal{G}^d_{n-1,m,K,\delta}}\Biggl[\int_{0}^{t}\int_{A^d_\delta}
\Bigl[\eta_d(X_{\ell,m,K,\delta}^{d,0} (\rdown{s-}_K ),X_{\ell,m,K,\delta}^{d,(0,n,k,\ell)} (\rdown{s-}_K ),z)\nonumber\\
&\qquad\qquad
-\1_{\N}(\ell)
\eta_d(X_{\ell-1,m,K,\delta}^{d,0} (\rdown{s-}_K ),X_{\ell-1,m,K,\delta}^{d,(0,n,k,\ell)} (\rdown{s-}_K ),z)\Bigr]
\tilde{N}^{d,0}(dz,ds)\Biggr]\Biggr)^\frac{1}{2}\Biggr \rVert_{L^2(\P;\R^d)}
\nonumber\\
&\leq 
\sum_{\ell=0}^{n-1}
\frac{1}{\sqrt{m^{n-\ell}}}\Biggl \lVert\Biggl(\E_{\mathcal{G}^d_{n-1,m,K,\delta}}\Biggl[\biggl \lVert\int_{0}^{t}\int_{A^d_\delta}
\Bigl[\eta_d(X_{\ell,m,K,\delta}^{d,0} (\rdown{s-}_K ),X_{\ell,m,K,\delta}^{d,(0,n,k,\ell)} (\rdown{s-}_K ),z)\nonumber\\
&\qquad\qquad
-\1_{\N}(\ell)
\eta_d(X_{\ell-1,m,K,\delta}^{d,0} (\rdown{s-}_K ),X_{\ell-1,m,K,\delta}^{d,(0,n,k,\ell)} (\rdown{s-}_K ),z)\Bigr]
\tilde{N}^{d,0}(dz,ds)\biggr \rVert^2\Biggr]\Biggr)^\frac{1}{2}\Biggr \rVert_{L^2(\P;\R^d)}
\nonumber\\
&= 
\sum_{\ell=0}^{n-1}
\frac{1}{\sqrt{m^{n-\ell}}}\Biggl(\E\E_{\mathcal{G}^d_{n-1,m,K,\delta}}\Biggl[\biggl \lVert\int_{0}^{t}\int_{A^d_\delta}
\Bigl[\eta_d(X_{\ell,m,K,\delta}^{d,0} (\rdown{s-}_K ),X_{\ell,m,K,\delta}^{d,(0,n,k,\ell)} (\rdown{s-}_K ),z)\nonumber\\
&\qquad\qquad
-\1_{\N}(\ell)
\eta_d(X_{\ell-1,m,K,\delta}^{d,0} (\rdown{s-}_K ),X_{\ell-1,m,K,\delta}^{d,(0,n,k,\ell)} (\rdown{s-}_K ),z)\Bigr]
\tilde{N}^{d,0}(dz,ds)\biggr \rVert^2\Biggr]\Biggr)^\frac{1}{2}
\nonumber\\
&= 
\sum_{\ell=0}^{n-1}
\frac{1}{\sqrt{m^{n-\ell}}}\Biggl(\E\Biggl[\biggl \lVert\int_{0}^{t}\int_{A^d_\delta}
\Bigl[\eta_d(X_{\ell,m,K,\delta}^{d,0} (\rdown{s-}_K ),X_{\ell,m,K,\delta}^{d,(0,n,k,\ell)} (\rdown{s-}_K ),z)\nonumber\\
&\qquad\qquad
-\1_{\N}(\ell)
\eta_d(X_{\ell-1,m,K,\delta}^{d,0} (\rdown{s-}_K ),X_{\ell-1,m,K,\delta}^{d,(0,n,k,\ell)} (\rdown{s-}_K ),z)\Bigr]
\tilde{N}^{d,0}(dz,ds)\biggr \rVert^2\Biggr]\Biggr)^\frac{1}{2}
\nonumber\\
&\leq \sum_{\ell=0}^{n-1}
\frac{1}{\sqrt{m^{n-\ell}}}\Biggl(\int_{0}^{t}\int_{\R^d}\E\Biggl[\biggl \lVert
\Bigl[\eta_d(X_{\ell,m,K,\delta}^{d,0} (\rdown{s-}_K ),X_{\ell,m,K,\delta}^{d,(0,n,k,\ell)} (\rdown{s-}_K ),z)\nonumber\\
&\qquad\qquad
-\1_{\N}(\ell)
\eta_d(X_{\ell-1,m,K,\delta}^{d,0} (\rdown{s-}_K ),X_{\ell-1,m,K,\delta}^{d,(0,n,k,\ell)} (\rdown{s-}_K ),z)\Bigr]
\biggr \rVert^2\Biggr]\nu^d(dz)\,ds\Biggr)^\frac{1}{2}
\nonumber\\
&\leq  \frac{\sqrt{T}c d^c}{\sqrt{m^n}}
+\sum_{\ell=1}^{n-1}\frac{1}{\sqrt{m^{n-\ell}}}
\left(\int_{0}^{t}2c\left\lVert X_{\ell,m,K,\delta}^{d,0} (\rdown{s-}_K ) 
-X_{\ell-1,m,K,\delta}^{d,0} (\rdown{s-}_K )
\right\rVert_{L^2(\P;\R^d)}^2 ds \right)^\frac{1}{2}.
\end{align}
The proof of Lemma~\ref{k06} is thus completed.
\end{proof}

\begin{lemma}[Conditional variance of the last sum] \label{s09}
In both cases (finite and infinite jump-activity) for all 
$d,n,m,K\in \N$, $\delta\in \Delta$, $t\in [0,T]$ we have
\begin{align}
&\Biggl \lVert\Biggl(\V_{\mathcal{G}^d_{n-1,m,K,\delta}}\Biggl[
\sum_{\ell=0}^{n-1}\frac{1}{m^{n-\ell}}\sum_{k=1}^{m^{n-\ell}}
\frac{1}{\mathcal{M}^d_\delta}
\sum_{i=1}^{\mathcal{M}^d_\delta}\int_{0}^{t}\biggl[\nonumber\\
&\qquad\qquad\qquad
\nu^d(A^d_\delta)
\Bigl[\eta_d(X_{\ell,m,K,\delta}^{d,0} (\rdown{s-}_K ),X_{\ell,m,K,\delta}^{d,(0,n,k,\ell)} (\rdown{s-}_K ),\rvV^{d,(0,n,k,\ell,i),\delta}_{\rdown{s}_K })\nonumber\\
&\qquad\qquad\qquad\qquad\qquad
-\1_\N(\ell)
\eta_d(X_{\ell-1,m,K,\delta}^{d,0} (\rdown{s-}_K ),X_{\ell-1,m,K,\delta}^{d,(0,n,k,\ell)} (\rdown{s-}_K ),\rvV^{d,(0,n,k,\ell,i),\delta}_{\rdown{s}_K })\Bigr]\nonumber\\
&\qquad\qquad\qquad
-\int_{A^d_\delta}
\Bigl[\eta_d(X_{\ell,m,K,\delta}^{d,0} (\rdown{s-}_K ),X_{\ell,m,K,\delta}^{d,(0,n,k,\ell)} (\rdown{s-}_K ),z)\nonumber\\
&\qquad\qquad\qquad\qquad
-\1_\N(\ell)
\eta_d(X_{\ell-1,m,K,\delta}^{d,0} (\rdown{s-}_K ),X_{\ell-1,m,K,\delta}^{d,(0,n,k,\ell)} (\rdown{s-}_K ),z)\,\nu^d(dz)\Bigr]\biggr]ds\Biggr]\Biggr)^\frac{1}{2}\Biggr \rVert_{L^2(\P;\R^d)}\nonumber\\
&\leq\frac{Tcd^c}{\sqrt{m^{n}}}+ \sum_{\ell=1}^{n-1}\frac{\sqrt{T}}{\sqrt{m^{n-\ell}}}
\left(\int_{0}^{t}2c\left\lVert X_{\ell,m,K,\delta}^{d,0} (\rdown{s-}_K ) 
-X_{\ell-1,m,K,\delta}^{d,0} (\rdown{s-}_K )
\right\rVert_{L^2(\P;\R^d)}^2 ds \right)^\frac{1}{2}.\label{s03}
\end{align}
\end{lemma}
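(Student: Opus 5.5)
We follow the pattern of the proof of Lemma~\ref{k06}: triangle inequality over $\ell$, a Bienaym\'e step, then Remark~\ref{r01b} for the second moments. For fixed $\ell$ and $k$, define the random field
\[
g_{\ell,k}(s,z)=\eta_d(X_{\ell,m,K,\delta}^{d,0}(\rdown{s-}_K),X_{\ell,m,K,\delta}^{d,(0,n,k,\ell)}(\rdown{s-}_K),z)-\1_\N(\ell)\,\eta_d(X_{\ell-1,m,K,\delta}^{d,0}(\rdown{s-}_K),X_{\ell-1,m,K,\delta}^{d,(0,n,k,\ell)}(\rdown{s-}_K),z),
\]
which is piecewise constant in $s$ on the grid. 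Set
\[
Y_{\ell,k,i}=\int_0^t\Bigl[\nu^d(A^d_\delta)\,g_{\ell,k}\bigl(s,\rvV^{d,(0,n,k,\ell,i),\delta}_{\rdown{s}_K}\bigr)-\int_{A^d_\delta}g_{\ell,k}(s,z)\,\nu^d(dz)\Bigr]ds .
\]

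Apply the triangle inequality for $\lVert(\V_{\mathcal{G}}[\cdot])^{1/2}\rVert_{L^2}$ to split the sum over $\ell$. For fixed $\ell$, the families indexed by $k$, namely the copies $X^{d,(0,n,k,\ell)}_{\ell},X^{d,(0,n,k,\ell)}_{\ell-1}$ and $(\rvV^{d,(0,n,k,\ell,i),\delta}_{\cdot})_i$, are i.i.d.\ and independent of $\mathcal{G}^d_{n-1,m,K,\delta}$. So Bienaym\'e's identity gives a factor $1/\sqrt{m^{n-\ell}}$, reducing matters to a single $k$.

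Within one $k$, condition additionally on $\mathcal{H}$, the $\sigma$-algebra generated by $\mathcal{G}$ and the $k$-th copies of the $X$'s. Given $\mathcal{H}$, the variables $Y_{\ell,k,i}$, $i=1,\ldots,\mathcal{M}^d_\delta$, are i.i.d. By Remark~\ref{r01b} they are also centered, because $\nu^d(A^d_\delta)\E[f(\rvV)]=\int_{A^d_\delta}f\,d\nu^d$. Using $\V_{\mathcal{G}}\le\E_{\mathcal{G}}[\lVert\cdot\rVert^2]$ and the tower property, we get
\[
\E\Bigl[\Bigl\lVert\tfrac{1}{\mathcal{M}^d_\delta}\textstyle\sum_i Y_{\ell,k,i}\Bigr\rVert^2\Bigr]=\tfrac{1}{\mathcal{M}^d_\delta}\E\bigl[\lVert Y_{\ell,k,1}\rVert^2\bigr].
\]

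Next we bound $\E\lVert Y_{\ell,k,1}\rVert^2$. Dropping the centering term (the conditional variance is at most the conditional second moment) and applying Jensen in $s$ (cost factor $t\le T$) leaves
\[
T\int_0^t\nu^d(A^d_\delta)^2\,\E\bigl[\lVert g_{\ell,k}(s,\rvV_{\rdown{s}_K})\rVert^2\bigr]ds .
\]
Independence of $\rvV$ from $g$ and Remark~\ref{r01b} turn this into
\[
T\,\nu^d(A^d_\delta)\int_0^t\int_{A^d_\delta}\E\lVert g_{\ell,k}(s,z)\rVert^2\,\nu^d(dz)\,ds .
\]
This is where the choice \eqref{s05b} enters. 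Since $\mathcal{M}^d_\delta\ge\nu^d(A^d_\delta)$, the factor $\nu^d(A^d_\delta)/\mathcal{M}^d_\delta$ is at most $1$. (In the degenerate Dirac case, $\nu^d(A^d_\delta)=0$, the whole term vanishes.) This leaves
\[
T\int_0^t\int_{\R^d}\E\lVert g_{\ell,k}(s,z)\rVert^2\,\nu^d(dz)\,ds .
\]

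Finally we conclude as in Lemma~\ref{k06}. For $\ell=0$, $g$ is $\eta_d$ evaluated at $X_0=0$, so \eqref{e12} gives the bound $c^2d^{2c}$; after the square root this yields $Tcd^c/\sqrt{m^n}$. For $\ell\ge1$, \eqref{e11b} together with the fact that the copy has the same law as $X^{d,0}$ gives
\[
2c\,\bigl\lVert X_{\ell,m,K,\delta}^{d,0}(\rdown{s-}_K)-X_{\ell-1,m,K,\delta}^{d,0}(\rdown{s-}_K)\bigr\rVert^2_{L^2(\P;\R^d)}
\]
under the integral, and the square root produces the $\sqrt{T}$ factor claimed in \eqref{s03}. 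Integrability throughout is ensured by Lemma~\ref{s04}.

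The main obstacle is the conditional-independence bookkeeping: we need that, given $\mathcal{H}$, the $\rvV$'s indexed by $(0,n,k,\ell,i)$ are independent of everything else. This holds because the $\rvV$ family is independent of all the $W$, $N$ and $\unif$ families that generate the $X$'s. One also needs measurability of $s\mapsto\rvV_{\rdown{s}_K}$, which is only finitely many random variables because of the grid.
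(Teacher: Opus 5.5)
Your proof is correct, and its skeleton matches the paper's: triangle inequality over $\ell$, Bienaym\'e over $k$, conditional independence and centering over $i$ (giving the factor $1/\mathcal{M}^d_\delta$), Remark~\ref{r01b}, and $\nu^d(A^d_\delta)\le\mathcal{M}^d_\delta$.

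The one genuine difference is how you control the time integral. The paper writes $\int_0^t[\ldots]\,ds$ as the finite sum $\sum_{j=0}^{J-1}[\ldots](s_{j+1}-s_j)$ over grid points. Given $\mathcal{G}^d_{n-1,m,K,\delta}$, the summands are independent, because the $\rvV^{d,(0,n,1,\ell,1),\delta}_{s_j}$, $j=0,\ldots,J-1$, are independent. Bienaym\'e across time steps then yields $\frac{T}{K}\int_0^t\V_{\mathcal{G}^d_{n-1,m,K,\delta}}[\ldots]\,ds$, and the factor $\frac{1}{K}$ is afterwards simply discarded via $\frac{T}{K}\cdot\frac{\nu^d(A^d_\delta)}{\mathcal{M}^d_\delta}\le T$.

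You instead apply Cauchy--Schwarz in $s$ and pay a factor $t\le T$. This reaches the same final estimate without any independence across time steps; your argument would work even if a single $\rvV$ were reused on every subinterval. Your route is therefore more elementary. It also shows that the \emph{discrete sum of conditionally independent variables} structure the paper emphasizes is not needed for the lemma as stated, since $\mathcal{M}^d_\delta$ is already of order $\nu^d(A^d_\delta)$. The paper's route gives a sharper intermediate bound (an extra $1/K$). That would matter only if one wanted fewer compensator samples, e.g.\ $\mathcal{M}^d_\delta$ of order $\max\{1,\nu^d(A^d_\delta)/K\}$.

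The rest is bookkeeping. The paper freezes $X^{d,0}_{\ell,m,K,\delta}$ and $X^{d,0}_{\ell-1,m,K,\delta}$ at $x,y$, replaces the copies $X^{d,(0,n,1,\ell)}_{\cdot}$ by $X^{d,0}_{\cdot}$ in law, and conditions on $\mathcal{G}^d_{n-1,m,K,\delta}$. You condition on the enlarged $\sigma$-algebra $\mathcal{H}$ instead; the two are equivalent.
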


\begin{proof}[Proof of Lemma~\ref{s09}]
The triangle inequality,
Bienaym\'e's identity, and the independence and distributional properties show that in both cases for all
$d,n,m,K\in \N$, $\delta\in \Delta$, $t\in [0,T]$ we have that 
\begin{align}
&\Biggl(\V_{\mathcal{G}^d_{n-1,m,K,\delta}}\Biggl[
\sum_{\ell=0}^{n-1}\frac{1}{m^{n-\ell}}\sum_{k=1}^{m^{n-\ell}}
\frac{1}{\mathcal{M}^d_\delta}
\sum_{i=1}^{\mathcal{M}^d_\delta}\int_{0}^{t}\biggl[\nonumber\\
&\qquad\qquad
\nu^d(A^d_\delta)
\Bigl[\eta_d(X_{\ell,m,K,\delta}^{d,0} (\rdown{s-}_K ),X_{\ell,m,K,\delta}^{d,(0,n,k,\ell)} (\rdown{s-}_K ),\rvV^{d,(0,n,k,\ell,i),\delta}_{\rdown{s}_K })\nonumber\\
&\qquad\qquad\qquad\qquad
-\1_\N(\ell)
\eta_d(X_{\ell-1,m,K,\delta}^{d,0} (\rdown{s-}_K ),X_{\ell-1,m,K,\delta}^{d,(0,n,k,\ell)} (\rdown{s-}_K ),\rvV^{d,(0,n,k,\ell,i),\delta}_{\rdown{s}_K })\Bigr]\nonumber\\
&\qquad\qquad
-\int_{A^d_\delta}
\Bigl[\eta_d(X_{\ell,m,K,\delta}^{d,0} (\rdown{s-}_K ),X_{\ell,m,K,\delta}^{d,(0,n,k,\ell)} (\rdown{s-}_K ),z)\nonumber\\
&\qquad\qquad\qquad\qquad
-\1_\N(\ell)
\eta_d(X_{\ell-1,m,K,\delta}^{d,0} (\rdown{s-}_K ),X_{\ell-1,m,K,\delta}^{d,(0,n,k,\ell)} (\rdown{s-}_K ),z)\,\nu^d(dz)\Bigr]\biggr]ds\Biggr]\Biggr)^{\frac{1}{2}}\nonumber\\
&\leq \sum_{\ell=0}^{n-1}\frac{1}{\sqrt{m^{n-\ell}}} \Biggl(\V_{\mathcal{G}^d_{n-1,m,K,\delta}}\Biggl[
\frac{1}{\mathcal{M}^d_\delta}
\sum_{i=1}^{\mathcal{M}^d_\delta}\int_{0}^{t}\biggl[\nonumber\\
&\qquad\qquad
\nu^d(A^d_\delta)
\Bigl[\eta_d(X_{\ell,m,K,\delta}^{d,0} (\rdown{s-}_K ),X_{\ell,m,K,\delta}^{d,(0,n,1,\ell)} (\rdown{s-}_K ),\rvV^{d,(0,n,1,\ell,i),\delta}_{\rdown{s}_K })\nonumber\\
&\qquad\qquad\qquad\qquad
-\1_\N(\ell)
\eta_d(X_{\ell-1,m,K,\delta}^{d,0} (\rdown{s-}_K ),X_{\ell-1,m,K,\delta}^{d,(0,n,1,\ell)} (\rdown{s-}_K ),\rvV^{d,(0,n,1,\ell,i),\delta}_{\rdown{s}_K })\Bigr]\nonumber\\
&\qquad\qquad
-\int_{A^d_\delta}
\Bigl[\eta_d(X_{\ell,m,K,\delta}^{d,0} (\rdown{s-}_K ),X_{\ell,m,K,\delta}^{d,(0,n,1,\ell)} (\rdown{s-}_K ),z)\nonumber\\
&\qquad\qquad\qquad\qquad
-\1_\N(\ell)
\eta_d(X_{\ell-1,m,K,\delta}^{d,0} (\rdown{s-}_K ),X_{\ell-1,m,K,\delta}^{d,(0,n,1,\ell)} (\rdown{s-}_K ),z)\,\nu^d(dz)\Bigr]\biggr]ds\Biggr]\Biggr)^\frac{1}{2}\nonumber\\
&\leq \sum_{\ell=0}^{n-1}\frac{1}{\sqrt{m^{n-\ell}}} \Biggl(\E_{\mathcal{G}^d_{n-1,m,K,\delta}}\Biggl[\Biggl \lVert
\frac{1}{\mathcal{M}^d_\delta}
\sum_{i=1}^{\mathcal{M}^d_\delta}\int_{0}^{t}\biggl[\nonumber\\
&\qquad\qquad
\nu^d(A^d_\delta)
\Bigl[\eta_d(X_{\ell,m,K,\delta}^{d,0} (\rdown{s-}_K ),X_{\ell,m,K,\delta}^{d,(0,n,1,\ell)} (\rdown{s-}_K ),\rvV^{d,(0,n,1,\ell,i),\delta}_{\rdown{s}_K })\nonumber\\
&\qquad\qquad\qquad\qquad
-\1_\N(\ell)
\eta_d(X_{\ell-1,m,K,\delta}^{d,0} (\rdown{s-}_K ),X_{\ell-1,m,K,\delta}^{d,(0,n,1,\ell)} (\rdown{s-}_K ),\rvV^{d,(0,n,1,\ell,i),\delta}_{\rdown{s}_K })\Bigr]\nonumber\\
&\qquad\qquad
-\int_{A^d_\delta}
\Bigl[\eta_d(X_{\ell,m,K,\delta}^{d,0} (\rdown{s-}_K ),X_{\ell,m,K,\delta}^{d,(0,n,1,\ell)} (\rdown{s-}_K ),z)\nonumber\\
&\qquad\qquad\qquad\qquad
-\1_\N(\ell)
\eta_d(X_{\ell-1,m,K,\delta}^{d,0} (\rdown{s-}_K ),X_{\ell-1,m,K,\delta}^{d,(0,n,1,\ell)} (\rdown{s-}_K ),z)\,\nu^d(dz)\Bigr]\biggr]ds\Biggr \rVert^2\Biggr]\Biggr)^\frac{1}{2}\nonumber\\
&\leq\sum_{\ell=0}^{n-1}\frac{1}{\sqrt{m^{n-\ell}}} \Biggl(\E\Biggl[\Biggl\lVert
\frac{1}{\mathcal{M}^d_\delta}
\sum_{i=1}^{\mathcal{M}^d_\delta}\int_{0}^{t}\biggl[
\nu^d(A^d_\delta)
\Bigl[\eta_d(\mathbf{X}(\rdown{s-}_K),X_{\ell,m,K,\delta}^{d,(0,n,1,\ell)} (\rdown{s-}_K ),\rvV^{d,(0,n,1,\ell,i),\delta}_{\rdown{s}_K })\nonumber\\
&\qquad\qquad\qquad\qquad\qquad\qquad\qquad\qquad
-\1_\N(\ell)
\eta_d(\mathbf{Y}(\rdown{s-}_K),X_{\ell-1,m,K,\delta}^{d,(0,n,1,\ell)} (\rdown{s-}_K ),\rvV^{d,(0,n,1,\ell,i),\delta}_{\rdown{s}_K })\Bigr]\nonumber\\
&\qquad\qquad
-\int_{A^d_\delta}
\Bigl[\eta_d(\mathbf{X}(\rdown{s-}_K),X_{\ell,m,K,\delta}^{d,(0,n,1,\ell)} (\rdown{s-}_K ),z)\nonumber\\
&\qquad\qquad\qquad\qquad
-\1_\N(\ell)
\eta_d(\mathbf{Y}(\rdown{s-}_K),X_{\ell-1,m,K,\delta}^{d,(0,n,1,\ell)} (\rdown{s-}_K ),z)\,\nu^d(dz)\Bigr]\biggr]ds\Biggr\rVert^2\Biggr]\Biggr|_{\tiny \begin{aligned}
\mathbf{X}&=X^{d,0}_{\ell,m,K,\delta}\\[-0.12cm]\mathbf{Y}&=X^{d,0}_{\ell-1,m,K,\delta}
\end{aligned}}\Biggr)^\frac{1}{2}.\label{s10}
\end{align}
We estimate the expectation here. The independence and distributional properties, the tower property,  the disintegration theorem, and \eqref{f09} (see also Remark~\ref{r01b})
show that in both cases we have for all $d,n,m,K\in \N$, $\delta\in \Delta$, $t\in [0,T]$, $\ell\in [0,n-1]\cap\Z$, $x,y\in \R^d$ that 
\begin{align}
&
\E\Biggl[\Biggl\lVert
\frac{1}{\mathcal{M}^d_\delta}
\sum_{i=1}^{\mathcal{M}^d_\delta}\int_{0}^{t}\biggl[
\nu^d(A^d_\delta)
\Bigl[\eta_d(x,X_{\ell,m,K,\delta}^{d,(0,n,1,\ell)} (\rdown{s-}_K ),\rvV^{d,(0,n,1,\ell,i),\delta}_{\rdown{s}_K })\nonumber\\
&\qquad\qquad\qquad\qquad\qquad\qquad\qquad\qquad
-\1_\N(\ell)
\eta_d(y,X_{\ell-1,m,K,\delta}^{d,(0,n,1,\ell)} (\rdown{s-}_K ),\rvV^{d,(0,n,1,\ell,i),\delta}_{\rdown{s}_K })\Bigr]\nonumber\\
&\qquad\qquad\qquad
-\int_{A^d_\delta}
\Bigl[\eta_d(x,X_{\ell,m,K,\delta}^{d,(0,n,1,\ell)} (\rdown{s-}_K ),z)\nonumber\\
&\qquad\qquad\qquad\qquad\qquad
-\1_\N(\ell)
\eta_d(y,X_{\ell-1,m,K,\delta}^{d,(0,n,1,\ell)} (\rdown{s-}_K ),z)\,\nu^d(dz)\Bigr]\biggr]ds\Biggr\rVert^2\Biggr]\nonumber\\
&=
\E\Biggl[\Biggl\lVert
\frac{1}{\mathcal{M}^d_\delta}
\sum_{i=1}^{\mathcal{M}^d_\delta}\int_{0}^{t}\biggl[
\nu^d(A^d_\delta)
\Bigl[\eta_d(x,X_{\ell,m,K,\delta}^{d,0} (\rdown{s-}_K ),\rvV^{d,(0,n,1,\ell,i),\delta}_{\rdown{s}_K })\nonumber\\
&\qquad\qquad\qquad\qquad\qquad\qquad\qquad\qquad
-\1_\N(\ell)
\eta_d(y,X_{\ell-1,m,K,\delta}^{d,0} (\rdown{s-}_K ),\rvV^{d,(0,n,1,\ell,i),\delta}_{\rdown{s}_K })\Bigr]\nonumber\\
&\qquad\qquad\qquad
-\int_{A^d_\delta}
\Bigl[\eta_d(x,X_{\ell,m,K,\delta}^{d,0} (\rdown{s-}_K ),z)\nonumber\\
&\qquad\qquad\qquad\qquad\qquad
-\1_\N(\ell)
\eta_d(y,X_{\ell-1,m,K,\delta}^{d,0} (\rdown{s-}_K ),z)\,\nu^d(dz)\Bigr]\biggr]ds\Biggr\rVert^2\Biggr]\nonumber\\
&=
\E\E_{\mathcal{G}_{n-1,m,K,\delta}^{d}}\Biggl[\Biggl\lVert
\frac{1}{\mathcal{M}^d_\delta}
\sum_{i=1}^{\mathcal{M}^d_\delta}\int_{0}^{t}\biggl[
\nu^d(A^d_\delta)
\Bigl[\eta_d(x,X_{\ell,m,K,\delta}^{d,0} (\rdown{s-}_K ),\rvV^{d,(0,n,1,\ell,i),\delta}_{\rdown{s}_K })\nonumber\\
&\qquad\qquad\qquad\qquad\qquad\qquad\qquad\qquad\qquad\qquad
-\1_\N(\ell)
\eta_d(y,X_{\ell-1,m,K,\delta}^{d,0} (\rdown{s-}_K ),\rvV^{d,(0,n,1,\ell,i),\delta}_{\rdown{s}_K })\Bigr]\nonumber\\
&\qquad\qquad\qquad
-\nu^d(A^d_\delta)\E_{\mathcal{G}_{n-1,m,K,\delta}^d}\!\Bigl[\eta_d(x,X_{\ell,m,K,\delta}^{d,0} (\rdown{s-}_K ),\rvV^{d,(0,n,1,\ell,i),\delta}_{\rdown{s}_K })\nonumber\\
&\qquad\qquad\qquad\qquad\qquad\qquad\qquad\qquad
-\1_\N(\ell)
\eta_d(y,X_{\ell-1,m,K,\delta}^{d,0} (\rdown{s-}_K ),\rvV^{d,(0,n,1,\ell,i),\delta}_{\rdown{s}_K })\Bigr]
\biggr]ds\Biggr\rVert^2\Biggr]\nonumber\\
&=\E 
\V_{\mathcal{G}_{n-1,m,K,\delta}^{d}}\Biggl[
\frac{1}{\mathcal{M}^d_\delta}
\sum_{i=1}^{\mathcal{M}^d_\delta}\int_{0}^{t}\biggl[
\nu^d(A^d_\delta)
\Bigl[\eta_d(x,X_{\ell,m,K,\delta}^{d,0} (\rdown{s-}_K ),\rvV^{d,(0,n,1,\ell,i),\delta}_{\rdown{s}_K })\nonumber\\
&\qquad\qquad\qquad\qquad\qquad\qquad\qquad\qquad\qquad\qquad
-\1_\N(\ell)
\eta_d(y,X_{\ell-1,m,K,\delta}^{d,0} (\rdown{s-}_K ),\rvV^{d,(0,n,1,\ell,i),\delta}_{\rdown{s}_K })\Bigr]
\biggr]ds\Biggr]\nonumber\\
&=
\frac{1}{\mathcal{M}^d_\delta}
\E 
\V_{\mathcal{G}_{n-1,m,K,\delta}^{d}}\Biggl[
\int_{0}^{t}\biggl[
\nu^d(A^d_\delta)
\Bigl[\eta_d(x,X_{\ell,m,K,\delta}^{d,0} (\rdown{s-}_K ),\rvV^{d,(0,n,1,\ell,1),\delta}_{\rdown{s}_K })\nonumber\\
&\qquad\qquad\qquad\qquad\qquad\qquad\qquad\qquad\qquad
-\1_\N(\ell)
\eta_d(y,X_{\ell-1,m,K,\delta}^{d,0} (\rdown{s-}_K ),\rvV^{d,(0,n,1,\ell,1),\delta}_{\rdown{s}_K })\Bigr]
\biggr]ds\Biggr].\label{s08}
\end{align}
We estimate the conditional variance here. Note that the integral here is in fact a discrete sum. Then 
Bienaym\'e's identity, the independence and distributional properties, the disintegration theorem, and \eqref{f09} (see also Remark~\ref{r01b})
show that in both cases we have for all $d,n,m,K\in \N$, $\delta\in \Delta$,
 $\ell\in [0,n-1]\cap\Z$, 
$J\in [1,K]\cap\Z$, $t\in (\frac{(J-1)T}{K}, \frac{JT}{K}]$,
$s_0=0,s_1=\frac{T}{K},\ldots,s_{J-1}=\frac{(J-1)T}{K}$, $t=s_J$, $x,y\in \R^d$
that 
\begin{align}
&\V_{\mathcal{G}_{n-1,m,K,\delta}^{d}}\Biggl[
\int_{0}^{t}\biggl[
\nu^d(A^d_\delta)
\Bigl[\eta_d(x,X_{\ell,m,K,\delta}^{d,0} (\rdown{s-}_K ),\rvV^{d,(0,n,1,\ell,1),\delta}_{\rdown{s}_K })\nonumber\\
&\qquad\qquad\qquad\qquad\qquad\qquad
-\1_\N(\ell)
\eta_d(y,X_{\ell-1,m,K,\delta}^{d,0} (\rdown{s-}_K ),\rvV^{d,(0,n,1,\ell,1),\delta}_{\rdown{s}_K })\Bigr]
\biggr]ds\Biggr]\nonumber\\
&=\V_{\mathcal{G}_{n-1,m,K,\delta}^{d}}\Biggl[
\sum_{j=0}^{J-1}\biggl[
\nu^d(A^d_\delta)
\Bigl[\eta_d(x,X_{\ell,m,K,\delta}^{d,0} (s_j ),\rvV^{d,(0,n,1,\ell,1),\delta}_{s_j})\nonumber\\
&\qquad\qquad\qquad\qquad\qquad\qquad
-\1_\N(\ell)
\eta_d(y,X_{\ell-1,m,K,\delta}^{d,0} (s_j),\rvV^{d,(0,n,1,\ell,1),\delta}_{s_j})\Bigr]
\biggr](s_{j+1}-s_j)\Biggr]\nonumber\\
&=
\sum_{j=0}^{J-1}\V_{\mathcal{G}_{n-1,m,K,\delta}^{d}}
\biggl[
\nu^d(A^d_\delta)
\Bigl[\eta_d(x,X_{\ell,m,K,\delta}^{d,0} (s_j ),\rvV^{d,(0,n,1,\ell,1),\delta}_{s_j})\nonumber\\
&\qquad\qquad\qquad\qquad\qquad\qquad
-\1_\N(\ell)
\eta_d(y,X_{\ell-1,m,K,\delta}^{d,0} (s_j),\rvV^{d,(0,n,1,\ell,1),\delta}_{s_j})\Bigr]
\biggr](s_{j+1}-s_j)^2\nonumber\\
&\leq \frac{T}{K}
\sum_{j=0}^{J-1}\V_{\mathcal{G}_{n-1,m,K,\delta}^{d}}
\biggl[
\nu^d(A^d_\delta)
\Bigl[\eta_d(x,X_{\ell,m,K,\delta}^{d,0} (s_j ),\rvV^{d,(0,n,1,\ell,1),\delta}_{s_j})\nonumber\\
&\qquad\qquad\qquad\qquad\qquad\qquad
-\1_\N(\ell)
\eta_d(y,X_{\ell-1,m,K,\delta}^{d,0} (s_j),\rvV^{d,(0,n,1,\ell,1),\delta}_{s_j})\Bigr]
\biggr](s_{j+1}-s_j)\nonumber\\
&\leq \frac{T}{K}\int_{0}^{t}\V_{\mathcal{G}_{n-1,m,K,\delta}^{d}}
\biggl[
\nu^d(A^d_\delta)
\Bigl[\eta_d(x,X_{\ell,m,K,\delta}^{d,0} (\rdown{s-}_K ),\rvV^{d,(0,n,1,\ell,1),\delta}_{\rdown{s}_K })\nonumber\\
&\qquad\qquad\qquad\qquad\qquad\qquad
-\1_\N(\ell)
\eta_d(y,X_{\ell-1,m,K,\delta}^{d,0} (\rdown{s-}_K ),\rvV^{d,(0,n,1,\ell,1),\delta}_{\rdown{s}_K })\Bigr]
\biggr]ds\nonumber\\
&\leq \frac{T}{K}\int_{0}^{t}\E_{\mathcal{G}_{n-1,m,K,\delta}^{d}}\Biggl[
\biggl\lVert
\nu^d(A^d_\delta)
\Bigl[\eta_d(x,X_{\ell,m,K,\delta}^{d,0} (\rdown{s-}_K ),\rvV^{d,(0,n,1,\ell,1),\delta}_{\rdown{s}_K })\nonumber\\
&\qquad\qquad\qquad\qquad\qquad\qquad
-\1_\N(\ell)
\eta_d(y,X_{\ell-1,m,K,\delta}^{d,0} (\rdown{s-}_K ),\rvV^{d,(0,n,1,\ell,1),\delta}_{\rdown{s}_K })\Bigr]
\biggr\rVert^2\Biggr]ds\nonumber\\
&=
\frac{T}{K}
\nu^d(A^d_\delta)
\int_{0}^{t}\int_{A^d_\delta}\E_{\mathcal{G}_{n-1,m,K,\delta}^{d}}\Biggl[
\biggl\lVert
\eta_d(x,X_{\ell,m,K,\delta}^{d,0} (\rdown{s-}_K ),z)\nonumber\\
&\qquad\qquad\qquad\qquad\qquad\qquad
-\1_\N(\ell)
\eta_d(y,X_{\ell-1,m,K,\delta}^{d,0} (\rdown{s-}_K ),z)
\biggr\rVert^2\Biggr]\nu^d(dz)\,ds.
\end{align}
Plugging this into \eqref{s08}, using the tower property, and recalling \eqref{s05b}
we obtain in both cases for all $d,n,m,K\in \N$,  $\delta\in \Delta$,
$\ell\in [0,n-1]\cap\Z$, 
$x,y\in \R^d$
that
\begin{align}
&
\E\Biggl[\Biggl\lVert
\frac{1}{\mathcal{M}^d_\delta}
\sum_{i=1}^{\mathcal{M}^d_\delta}\int_{0}^{t}\biggl[
\nu^d(A^d_\delta)
\Bigl[\eta_d(x,X_{\ell,m,K,\delta}^{d,(0,n,1,\ell)} (\rdown{s-}_K ),\rvV^{d,(0,n,1,\ell,i),\delta}_{\rdown{s}_K })\nonumber\\
&\qquad\qquad\qquad\qquad\qquad\qquad\qquad\qquad
-\1_\N(\ell)
\eta_d(y,X_{\ell-1,m,K,\delta}^{d,(0,n,1,\ell)} (\rdown{s-}_K ),\rvV^{d,(0,n,1,\ell,i),\delta}_{\rdown{s}_K })\Bigr]\nonumber\\
&\qquad\qquad\qquad
-\int_{A^d_\delta}
\Bigl[\eta_d(x,X_{\ell,m,K,\delta}^{d,(0,n,1,\ell)} (\rdown{s-}_K ),z)\nonumber\\
&\qquad\qquad\qquad\qquad\qquad\qquad\qquad\qquad
-\1_\N(\ell)
\eta_d(y,X_{\ell-1,m,K,\delta}^{d,(0,n,1,\ell)} (\rdown{s-}_K ),z)\,\nu^d(dz)\Bigr]\biggr]ds\Biggr\rVert^2\Biggr]\nonumber\\
&\leq \frac{1}{\mathcal{M}^d_\delta}
\frac{T}{K}
\nu^d(A^d_\delta)
\int_{0}^{t}\int_{A^d_\delta}\E \E_{\mathcal{G}_{n-1,m,K,\delta}^{d}}\Biggl[
\biggl\lVert
\eta_d(x,X_{\ell,m,K,\delta}^{d,0} (\rdown{s-}_K ),z)\nonumber\\
&\qquad\qquad\qquad\qquad\qquad\qquad\qquad\qquad
-\1_\N(\ell)
\eta_d(y,X_{\ell-1,m,K,\delta}^{d,0} (\rdown{s-}_K ),z)
\biggr\rVert^2\Biggr]\nu^d(dz)\,ds\nonumber\\
&\leq T\int_{0}^{t}\int_{A^d_\delta}\E \Biggl[
\biggl\lVert
\eta_d(x,X_{\ell,m,K,\delta}^{d,0} (\rdown{s-}_K ),z)\nonumber\\
&\qquad\qquad\qquad\qquad\qquad\qquad
-\1_\N(\ell)
\eta_d(y,X_{\ell-1,m,K,\delta}^{d,0} (\rdown{s-}_K ),z)
\biggr\rVert^2\Biggr]\nu^d(dz)\,ds.\label{k16}
\end{align}
Plugging this into \eqref{s10}, taking  $\lVert\cdot\rVert_{L^2(\P;\R^d)}$, and using \eqref{e11b} we obtain in both cases for all 
$d,n,m,K\in \N$, $\delta\in \Delta$
that
\begin{align}
&\Biggl \lVert\Biggl(\V_{\mathcal{G}^d_{n-1,m,K,\delta}}\Biggl[
\sum_{\ell=0}^{n-1}\frac{1}{m^{n-\ell}}\sum_{k=1}^{m^{n-\ell}}
\frac{1}{\mathcal{M}^d_\delta}
\sum_{i=1}^{\mathcal{M}^d_\delta}\int_{0}^{t}\biggl[\nonumber\\
&\qquad\qquad
\nu^d(A^d_\delta)
\Bigl[\eta_d(X_{\ell,m,K,\delta}^{d,0} (\rdown{s-}_K ),X_{\ell,m,K,\delta}^{d,(0,n,k,\ell)} (\rdown{s-}_K ),\rvV^{d,(0,n,k,\ell,i),\delta}_{\rdown{s}_K })\nonumber\\
&\qquad\qquad\qquad\qquad\qquad
-\1_\N(\ell)
\eta_d(X_{\ell-1,m,K,\delta}^{d,0} (\rdown{s-}_K ),X_{\ell-1,m,K,\delta}^{d,(0,n,k,\ell)} (\rdown{s-}_K ),\rvV^{d,(0,n,k,\ell,i),\delta}_{\rdown{s}_K })\Bigr]\nonumber\\
&\qquad\qquad
-\int_{A^d_\delta}
\Bigl[\eta_d(X_{\ell,m,K,\delta}^{d,0} (\rdown{s-}_K ),X_{\ell,m,K,\delta}^{d,(0,n,k,\ell)} (\rdown{s-}_K ),z)\nonumber\\
&\qquad\qquad\qquad\qquad
-\1_\N(\ell)
\eta_d(X_{\ell-1,m,K,\delta}^{d,0} (\rdown{s-}_K ),X_{\ell-1,m,K,\delta}^{d,(0,n,k,\ell)} (\rdown{s-}_K ),z)\,\nu^d(dz)\Bigr]\biggr]ds\Biggr]\Biggr)^\frac{1}{2}\Biggr \rVert_{L^2(\P;\R^d)}\nonumber\\
&\leq \sum_{\ell=0}^{n-1}\frac{\sqrt{T}}{\sqrt{m^{n-\ell}}}\Biggl \lVert\Biggl(\int_{0}^{t}\int_{A^d_\delta}\E \Biggl[
\biggl\lVert
\eta_d(x,X_{\ell,m,K,\delta}^{d,0} (\rdown{s-}_K ),z)\nonumber\\
&\qquad\qquad\qquad\qquad
-\1_\N(\ell)
\eta_d(y,X_{\ell-1,m,K,\delta}^{d,0} (\rdown{s-}_K ),z)
\biggr\rVert^2\Biggr]\Biggr|_{\substack{x=X^{d,0}_{\ell,m,K,\delta}(\rdown{s-}_K)\\y=X^{d,0}_{\ell-1,m,K,\delta}(\rdown{s-}_K)}}\nu^d(dz)\,ds\Biggr)^\frac{1}{2}\Biggr \rVert_{L^2(\P;\R^d)}\nonumber\\
&=
 \sum_{\ell=0}^{n-1}\frac{\sqrt{T}}{\sqrt{m^{n-\ell}}}\Biggl(\int_{0}^{t}\int_{A^d_\delta}\E \Biggl[\E \Biggl[
\biggl\lVert
\eta_d(x,X_{\ell,m,K,\delta}^{d,0} (\rdown{s-}_K ),z)\nonumber\\
&\qquad\qquad\qquad\qquad
-\1_\N(\ell)
\eta_d(y,X_{\ell-1,m,K,\delta}^{d,0} (\rdown{s-}_K ),z)
\biggr\rVert^2\Biggr]\Biggr|_{\substack{x=X^{d,0}_{\ell,m,K,\delta}(\rdown{s-}_K)\\y=X^{d,0}_{\ell-1,m,K,\delta}(\rdown{s-}_K)}}\Biggr]\nu^d(dz)\,ds\Biggr)^\frac{1}{2}\nonumber\\
&\leq\frac{Tcd^c}{\sqrt{m^{n}}}+ \sum_{\ell=1}^{n-1}\frac{\sqrt{T}}{\sqrt{m^{n-\ell}}}
\left(\int_{0}^{t}2c\left\lVert X_{\ell,m,K,\delta}^{d,0} (\rdown{s-}_K ) 
-X_{\ell-1,m,K,\delta}^{d,0} (\rdown{s-}_K )
\right\rVert_{L^2(\P;\R^d)}^2 ds \right)^\frac{1}{2}.
\end{align}
This completes the proof of Lemma~\ref{s09}.
\end{proof}
\begin{corollary}[Statistical error]\label{k09}In both cases (finite and infinite jump-activity) there exists $\kappa\in (0,\infty)$ such that for all 
$d,n,m,K\in \N$, $\delta\in \Delta$, $t\in [0,T]$ we have
\begin{align}
&
\left\lVert 
X^{d,0}_{n,m,K,\delta}(t)-
\E _{\mathcal{G}^{d}_{n-1,m,K,\delta} }\!\left[
X^{d,0}_{n,m,K,\delta}(t)\right]
\right\rVert_{L^2(\P;\R^d)}\nonumber\\
&
\leq \frac{\kappa d^\kappa }{\sqrt{m^n}}
+\sum_{\ell=0}^{n-1}\frac{(2-\1_{\{n-1\}}(\ell))\kappa}{\sqrt{m^{n-\ell-1}}}
\left(\int_{0}^{t}\left\lVert X_{\ell,m,K,\delta}^{d,0} (\rdown{s-}_K ) 
-X_{K,\delta}^d(\rdown{s-}_K ) 
\right\rVert_{L^2(\P;\R^d)}^2 ds\right)^\frac{1}{2}.
\end{align}
\end{corollary}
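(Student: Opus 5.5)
We will combine Lemma~\ref{s04}\eqref{s04b} with the four conditional variance bounds, Lemmas~\ref{k03}, \ref{k04}, \ref{k06}, and~\ref{s09}. By \eqref{s04c} (with $\theta=0$), $X^{d,0}_{n,m,K,\delta}(t)$ equals $\xi_d$ plus four sums: the $\mu_d$-sum, the $\sigma_d$-stochastic integral sum, the compensated Poisson integral sum, and the compensator Monte Carlo correction sum. Since $\xi_d$ is deterministic, the left-hand side of the corollary is
\[
\left\lVert\left(\V_{\mathcal{G}^d_{n-1,m,K,\delta}}\!\left[X^{d,0}_{n,m,K,\delta}(t)\right]\right)^{\frac12}\right\rVert_{L^2(\P;\R^d)} .
\]
The conditional standard deviation $(\V_{\mathcal{G}}[\cdot])^{1/2}$ is a conditional $L^2$-seminorm, so Minkowski's inequality holds for it pointwise. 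Applying the $L^2(\P)$-norm and the triangle inequality then bounds the left-hand side by the sum of the four quantities estimated in Lemmas~\ref{k03}--\ref{s09}. This yields
\[
\frac{\kappa d^\kappa}{\sqrt{m^n}}+\sum_{\ell=1}^{n-1}\frac{\kappa}{\sqrt{m^{n-\ell}}}\left(\int_0^t\left\lVert X^{d,0}_{\ell,m,K,\delta}(\rdown{s-}_K)-X^{d,0}_{\ell-1,m,K,\delta}(\rdown{s-}_K)\right\rVert_{L^2(\P;\R^d)}^2ds\right)^{\frac12}
\]
for an enlarged $\kappa$ independent of $d,n,m,K,\delta,t$.

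Next we insert the Euler--Maruyama process $X^d_{K,\delta}$ of Lemma~\ref{a14} into each difference, using
\[
\lVert X_\ell-X_{\ell-1}\rVert\le \lVert X_\ell-X^d_{K,\delta}\rVert+\lVert X_{\ell-1}-X^d_{K,\delta}\rVert,
\]
together with Minkowski's inequality in $L^2(ds)$. This produces, for each $\ell\in[1,n-1]$, one term with index $\ell$ and one with index $\ell-1$, both weighted by $\kappa m^{-(n-\ell)/2}$. We then re-index the $\ell-1$ terms as $\ell'=\ell-1\in[0,n-2]$, which carry weight $\kappa m^{-(n-\ell'-1)/2}$. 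For $\ell\in[1,n-2]$, index $\ell$ collects the contributions $\kappa m^{-(n-\ell)/2}+\kappa m^{-(n-\ell-1)/2}\le 2\kappa m^{-(n-\ell-1)/2}$. Index $\ell=0$ appears only through the re-indexed term, so it gets weight at most $2\kappa m^{-(n-1)/2}$; the case $n=1$ is trivial. Index $\ell=n-1$ appears only through the direct term, with weight $\kappa m^{-1/2}\le\kappa m^{0}$. This explains the factor $2-\1_{\{n-1\}}(\ell)$ and the exponent $n-\ell-1$ in the statement.

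The main obstacle is justifying the first step: that the conditional variance of the full sum is controlled by the sum of the conditional standard deviations of the four pieces. We handle this by Minkowski's inequality for $\E_{\mathcal{G}}[\lVert\cdot-\E_{\mathcal{G}}\cdot\rVert^2]^{1/2}$, which requires the integrability from Lemma~\ref{s04}\eqref{s04a}, followed by the triangle inequality in $L^2(\P)$. Everything else is index bookkeeping.
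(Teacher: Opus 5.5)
Your proposal is correct and follows the paper's proof: you bound the conditional standard deviation of the four sums in \eqref{s04c} by the triangle (Minkowski) inequality and Lemmas~\ref{k03}--\ref{s09}, then insert $X^d_{K,\delta}$ and re-index to obtain the weights $(2-\1_{\{n-1\}}(\ell))\kappa\, m^{-(n-\ell-1)/2}$. Your explicit treatment of the edge cases $\ell=0$, $\ell=n-1$, and $n=1$ spells out the bookkeeping that the paper compresses into its last displayed step.
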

\begin{proof}[Proof of Corollary~\ref{k09}]The triangle inequality and Lemmas~\ref{k03}--\ref{s09} prove that in both cases there exists $\kappa\in (0,\infty)$ such that  for all 
$d,n,m,K\in \N$, $\delta\in \Delta$, $t\in [0,T]$ we have that
\begin{align}
&
\left\lVert 
X^{d,0}_{n,m,K,\delta}(t)-
\E _{\mathcal{G}^{d}_{n-1,m,K,\delta} }\!\left[
X^{d,0}_{n,m,K,\delta}(t)\right]
\right\rVert_{L^2(\P;\R^d)}\nonumber\\
&
=\left(\E \!\left[
\left\lVert 
X^{d,0}_{n,m,K,\delta}(t)-
\E _{\mathcal{G}^{d}_{n-1,m,K,\delta} }\!\left[
X^{d,0}_{n,m,K,\delta}(t)\right]
\right\rVert^2\right]\right)^\frac{1}{2}\nonumber\\
&=\left(\E \E_{\mathcal{G}^{d}_{n-1,m,K,\delta} } \!\left[
\left\lVert 
X^{d,0}_{n,m,K,\delta}(t)-
\E _{\mathcal{G}^{d}_{n-1,m,K,\delta} }\!\left[
X^{d,0}_{n,m,K,\delta}(t)\right]
\right\rVert^2
\right]\right)^\frac{1}{2}\nonumber\\
&=\left(\E\!\left[\var_{\mathcal{G}^{d}_{n-1,m,K,\delta}\! }\left[X^{d,0}_{n,m,K,\delta}(t)\right]\right]\right)^\frac{1}{2}\nonumber\\
&=
\left\lVert
\left(\var_{\mathcal{G}^d_{n-1,m,K,\delta}}\!\left
[
X^{d,0}_{n,m,K,\delta}(t)\right]\right)^\frac{1}{2} 
\right\rVert_{L^2(\P;\R)}
\nonumber\\
&\leq  \frac{\kappa d^\kappa}{\sqrt{m^n}}
+\sum_{\ell=1}^{n-1}\frac{\kappa}{\sqrt{m^{n-\ell}}}
\left(\int_{0}^{t}\left\lVert X_{\ell,m,K,\delta}^{d,0} (\rdown{s-}_K ) 
-X_{\ell-1,m,K,\delta}^{d,0} (\rdown{s-}_K )
\right\rVert_{L^2(\P;\R^d)}^2 ds \right)^\frac{1}{2}\nonumber\\
&\leq \frac{\kappa d^\kappa}{\sqrt{m^n}}
+\sum_{\ell=1}^{n-1}\frac{\kappa}{\sqrt{m^{n-\ell}}}\sum_{j=\ell-1}^{\ell}
\left(\int_{0}^{t}\left\lVert X_{j,m,K,\delta}^{d,0} (\rdown{s-}_K ) 
-X_{K,\delta}^{d} (\rdown{s-}_K )
\right\rVert_{L^2(\P;\R^d)}^2 ds \right)^\frac{1}{2}\nonumber\\
&
\leq  \frac{\kappa d^\kappa }{\sqrt{m^n}}
+\sum_{\ell=0}^{n-1}\frac{(2-\1_{\{n-1\}}(\ell))\kappa}{\sqrt{m^{n-\ell-1}}}
\left(\int_{0}^{t}\left\lVert X_{\ell,m,K,\delta}^{d,0} (\rdown{s-}_K ) 
-X_{K,\delta}^d(\rdown{s-}_K )
\right\rVert_{L^2(\P;\R^d)}^2 ds\right)^\frac{1}{2}.
\end{align}
This completes the proof of Corollary~\ref{k09}.
\end{proof}
\subsection{Bias}We will estimate the bias of the MLP approximations.

\begin{lemma}[Conditional expectation of the approximations]
\label{k02}
In both cases we have for all
$d,n,m,K\in \N$, $\delta\in \Delta$, $t\in [0,T]$ that
\begin{align}
&\E _{\mathcal{G}^d_{n-1,m,K,\delta}}\!\left[X^{d,0}_{n,m,K,\delta}(t)\right]\nonumber\\
&= \xi_d+\int_{0}^{t}
\E\! \left[\mu_d (x, X^{d,0}_{n-1,m,K,\delta}(\rdown{s-}_K) )\right]\!|_{x=X^{d,0}_{n-1,m,K,\delta}(\rdown{s-}_K)}\,ds\nonumber\\
&\quad 
+\int_{0}^{t}
\E\! \left[\sigma_d (x, X^{d,0}_{n-1,m,K,\delta}(\rdown{s-}_K) )\right]\!|_{x=X^{d,0}_{n-1,m,K,\delta}(\rdown{s-}_K)}\,dW^{d,0}(s)\nonumber
\\
&\quad +\int_{0}^{t}\int_{A^d_\delta}
\E\! \left[\eta_d (x, X^{d,0}_{n-1,m,K,\delta}(\rdown{s-}_K),z )\right]\!|_{x=X^{d,0}_{n-1,m,K,\delta}(\rdown{s-}_K)}\,\tilde{N}^{d,0}(dz,ds).
\end{align}
\end{lemma}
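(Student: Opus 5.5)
Start from the representation \eqref{s04c} of Lemma~\ref{s04}\eqref{s04b} with $\theta=0$, and compute the $\mathcal{G}^d_{n-1,m,K,\delta}$-conditional expectation of each of the five groups of terms separately. Conditioning is legitimate because Lemma~\ref{s04}\eqref{s04a} gives square integrability of every summand. The key structural fact is that $\mathcal{G}^d_{n-1,m,K,\delta}$ is generated by $W^{d,0}$, $N^{d,0}$ and $X^{d,0}_{\ell,m,K,\delta}$, $\ell\le n-1$. On the other hand, the families $(\unif^{(0,n,k,\ell)})$, $(X^{d,(0,n,k,\ell)}_{j,m,K,\delta})_j$ and $(\rvV^{d,(0,n,k,\ell,i),\delta}_s)$ are independent of this $\sigma$-algebra. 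Each $X^{d,(0,n,k,\ell)}_{j,m,K,\delta}$ is built from the randomness indexed by $(0,n,k,\ell,\ldots)$ only, and it has the same law as $X^{d,0}_{j,m,K,\delta}$.

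For the drift term, apply the disintegration (freezing) lemma: conditionally, one integrates over $\unif$ and over an independent copy. By \eqref{u01b}, $\E[t f(t\unif)]=\int_0^t f(s)\,ds$, and the identity in law gives
\[
\E_{\mathcal{G}}\bigl[\mu_d(X^{d,0}_{\ell}(\rdown{s-}_K),X^{d,(0,n,k,\ell)}_{\ell}(\rdown{s-}_K))\bigr]
=\E[\mu_d(x,X^{d,0}_{\ell}(\rdown{s-}_K))]|_{x=X^{d,0}_{\ell}(\rdown{s-}_K)},
\]
where $\rdown{t\unif-}_K$ is matched with $\rdown{s-}_K$ for a.e.\ $s$. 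Averaging over $k$ removes the factor $m^{-(n-\ell)}$. Summing over $\ell$ then telescopes, by the same identity as \eqref{s10a}, to the single level $n-1$ term.

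The stochastic integrals against $W^{d,0}$ and $\tilde N^{d,0}$ are handled the same way. Since these integrators are $\mathcal{G}$-measurable, the conditional expectation passes inside the integral. Concretely, take a regular conditional distribution, or approximate the integrands by simple predictable processes and use that the integrals are $L^2$-limits, together with the It\^o isometry and conditional Jensen. This step is the \emph{main obstacle}: one must justify exchanging $\E_{\mathcal{G}}$ with a stochastic integral whose integrator is itself $\mathcal{G}$-measurable while the integrand depends on extra independent randomness. I would first try the product-space argument. Write the integrand as $g(\omega_1,\omega_2)$, with $\omega_1$ the $\mathcal{G}$-part and $\omega_2$ the independent part. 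The stochastic integral of $\int g(\cdot,\omega_2)\,\P(d\omega_2)$ then equals the $\omega_2$-average of the pathwise-in-$\omega_2$ stochastic integrals. This is a stochastic Fubini theorem, valid because $\int\!\int\E\|g\|^2\,ds\,\P(d\omega_2)<\infty$ by Lemma~\ref{s04}\eqref{s04a}. The integrals are moreover finite sums on the grid for $W$; for $N$ one can use the compensated form. Telescoping again yields the $\sigma_d$ and $\eta_d$ terms at level $n-1$.

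Finally, show that the compensator-correction term has zero conditional expectation. Its $s$-integrand is, conditionally on $\mathcal{G}$ and on the copy $X^{d,(0,n,k,\ell)}$, the difference between $\nu^d(A^d_\delta)\,\eta_d(\cdot,\cdot,\rvV)$ and $\int_{A^d_\delta}\eta_d\,d\nu^d$. Since $\rvV$ is independent of everything else, Remark~\ref{r01b}, applied to the positive and negative parts componentwise with integrability from \eqref{s07}, gives conditional mean zero. Conditional Fubini in $s$ is justified by the same bounds. Adding $\xi_d\1_\N(n)=\xi_d$ completes the formula.
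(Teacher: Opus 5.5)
Your proposal is correct and follows the same route as the paper. You compute the $\mathcal{G}^d_{n-1,m,K,\delta}$-conditional expectation of each term of \eqref{s04c} by independence and the disintegration (freezing) argument, show via Remark~\ref{r01b} that the compensator-correction term has conditional mean zero, and let the sum over $\ell$ telescope to level $n-1$. The one difference is that you spell out why the conditional expectation may be moved inside the $dW^{d,0}$ and $\tilde{N}^{d,0}$ integrals (grid-piecewise-constant integrands, or a stochastic Fubini argument), whereas the paper covers this step only by citing ``the disintegration theorem, the independence and distributional properties''.
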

\begin{proof}[Proof of Lemma~\ref{k02}]
First, the disintegration theorem, the independence and distributional properties, \eqref{u01b}, and \eqref{f09} (see also Remark~\ref{r01b}) imply 
for all
$d,n,m,K\in \N$,  $\delta\in \Delta$, $t\in [0,T]$ 
that
\begin{align}
&t\E_{\mathcal{G}^d_{n-1,m,K,\delta}}\Bigl[
\mu_d (X_{\ell,m,K,\delta}^{d,0} (\rdown{t\unif^{(0,n,k,\ell)}-}_K ),X_{\ell,m,K,\delta}^{d,(0,n,k,\ell)} (\rdown{t\unif^{(0,n,k,\ell)}-}_K ))\nonumber\\
&\qquad\qquad\qquad\qquad
-\1_{\N}(\ell)
\mu_d (X_{\ell-1,m,K,\delta}^{d,0} (\rdown{t\unif^{(0,n,k,\ell)}-}_K ),X_{\ell-1,m,K,\delta}^{d,(0,n,k,\ell)} (\rdown{t\unif^{(0,n,k,\ell)}-}_K ))
\Bigr]\nonumber\\
&=\int_{0}^{t}\biggl(\E\Bigl[
\mu_d ( x,X^{d,0}_{\ell,m,K,\delta}(\rdown{s-}_K))
\Bigr]|_{x=X^{d,0}_{\ell,m,K,\delta}(\rdown{s-}_K)}\nonumber\\
&\qquad\qquad
-\1_\N(\ell)
\E\Bigl[
\mu_d ( x,X^{d,0}_{\ell-1,m,K,\delta}(\rdown{s-}_K))
\Bigr]|_{x=X^{d,0}_{\ell-1,m,K,\delta}(\rdown{s-}_K)}
\biggr)ds,
\end{align}
\begin{align}
&
\E_{\mathcal{G}^d_{n-1,m,K,\delta}}\biggl[\int_{0}^{t}
\Bigl[
\sigma_d(X_{\ell,m,K,\delta}^{d,0} (\rdown{s-}_K ),X_{\ell,m,K,\delta}^{d,(0,n,k,\ell)} (\rdown{s-}_K ))\nonumber\\
&\qquad\qquad\qquad\qquad
-\1_{\N}(\ell)
\sigma_d(X_{\ell-1,m,K,\delta}^{d,0} (\rdown{s-}_K ),X_{\ell-1,m,K,\delta}^{d,(0,n,k,\ell)} (\rdown{s-}_K ))\Bigr]
dW^{d,0}(s)\biggr]
\nonumber
\\
&=\int_{0}^{t}\biggl(\E\Bigl[
\sigma_d ( x,X^{d,0}_{\ell,m,K,\delta}(\rdown{s-}_K))
\Bigr]|_{x=X^{d,0}_{\ell,m,K,\delta}(\rdown{s-}_K)}\nonumber\\
&\qquad\qquad\qquad\qquad
-\1_\N(\ell)
\E\Bigl[
\sigma_d ( x,X^{d,0}_{\ell-1,m,K,\delta}(\rdown{s-}_K))
\Bigr]|_{x=X^{d,0}_{\ell-1,m,K,\delta}(\rdown{s-}_K)}
\biggr)dW^{d,0}(s),
\end{align} 
\begin{align}
&\E_{\mathcal{G}^d_{n-1,m,K,\delta}}\Biggl[\int_{0}^{t}\int_{A^d_\delta}
\Bigl[\eta_d(X_{\ell,m,K,\delta}^{d,0} (\rdown{s-}_K ),X_{\ell,m,K,\delta}^{d,(0,n,k,\ell)} (\rdown{s-}_K ),z)\nonumber\\
&\qquad\qquad\qquad\qquad\qquad
-\1_{\N}(\ell)
\eta_d(X_{\ell-1,m,K,\delta}^{d,0} (\rdown{s-}_K ),X_{\ell-1,m,K,\delta}^{d,(0,n,k,\ell)} (\rdown{s-}_K ),z)\Bigr]
\tilde{N}^{d,0}(dz,ds)\Biggr]\nonumber\\
&=\int_{0}^{t}\int_{A^d_\delta}
\biggl(\E\Bigl[
\eta_d ( x,X^{d,0}_{\ell,m,K,\delta}(\rdown{s-}_K),z)
\Bigr]|_{x=X^{d,0}_{\ell,m,K,\delta}(\rdown{s-}_K)}\nonumber\\
&\qquad\qquad\qquad
-\1_\N(\ell)
\E\Bigl[
\eta_d ( x,X^{d,0}_{\ell-1,m,K,\delta}(\rdown{s-}_K),z)
\Bigr]|_{x=X^{d,0}_{\ell-1,m,K,\delta}(\rdown{s-}_K)}
\biggr)\tilde{N}^{d,0}(dz,ds),
\end{align}
and
\begin{align}
&\E_{\mathcal{G}^d_{n-1,m,K,\delta}}\Biggl[\int_{0}^{t}\biggl[
\nu^d(A^d_\delta)
\Bigl[\eta_d(X_{\ell,m,K,\delta}^{d,0} (\rdown{s-}_K ),X_{\ell,m,K,\delta}^{d,(0,n,k,\ell)} (\rdown{s-}_K ),\rvV^{d,(0,n,k,\ell,i),\delta}_{\rdown{s}_K })\nonumber\\
&\qquad\qquad\qquad\qquad\qquad
-\1_\N(\ell)
\eta_d(X_{\ell-1,m,K,\delta}^{d,0} (\rdown{s-}_K ),X_{\ell-1,m,K,\delta}^{d,(0,n,k,\ell)} (\rdown{s-}_K ),\rvV^{d,(0,n,k,\ell,i),\delta}_{\rdown{s}_K })\Bigr]\nonumber\\
&\qquad\qquad
-\int_{A^d_\delta}
\Bigl[\eta_d(X_{\ell,m,K,\delta}^{d,0} (\rdown{s-}_K ),X_{\ell,m,K,\delta}^{d,(0,n,k,\ell)} (\rdown{s-}_K ),z)\nonumber\\
&\qquad\qquad\qquad\qquad\qquad
-\1_\N(\ell)
\eta_d(X_{\ell-1,m,K,\delta}^{d,0} (\rdown{s-}_K ),X_{\ell-1,m,K,\delta}^{d,(0,n,k,\ell)} (\rdown{s-}_K ),z)\Bigr]\nu^d(dz)\biggr]\,ds\Biggr]=0.
\end{align}
Adding these equations and applying Lemma~\ref{s04} complete the proof of Lemma~\ref{k02}.
\end{proof}

\begin{lemma}[Bias]\label{s11}
For every $d,K\in \N$, $\delta\in [0,1]$ let $X^d_{K,\delta}$ be the process in Lemma~\ref{a14}.  Then  there exists $\kappa\in (0,\infty)$ such that in both cases for all
$d,n,m,K\in \N$, 
$\delta\in \Delta $,
 $t\in [0,T]$ we have
\begin{align}
&\left\lVert
\E_{\mathcal{G}^d_{n-1,m,K,\delta}} \!
\left[X^{d,0}_{n,m,K,\delta}(t)\right]-X_{K,\delta}^d(t)\right\rVert_{L^2(\P;\R^d)}\nonumber\\
&\leq \kappa \left(\int_{0}^{t}\left\lVert X^{d,0}_{n-1,m,K,\delta}(\rdown{s-}_K)-X_{K,\delta}^d(\rdown{s-}_K)\right\rVert_{L^2(\P;\R^d)}^2ds\right)^\frac{1}{2}.
\end{align}
\end{lemma}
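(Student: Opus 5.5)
We subtract \eqref{b02b} from the representation of the conditional expectation in Lemma~\ref{k02}. Both processes start at $\xi_d$, so the difference $\E_{\mathcal{G}^d_{n-1,m,K,\delta}}[X^{d,0}_{n,m,K,\delta}(t)]-X^d_{K,\delta}(t)$ splits into three terms. The first is a Lebesgue integral of differences of the $\mu_d$-expectations. The second is a stochastic integral against $W^{d,0}$ of differences of the $\sigma_d$-expectations. The third is a compensated Poisson integral over $A^d_\delta$ against $\tilde N^{d,0}$ of differences of the $\eta_d$-expectations. In each term the inner random variables are $X^{d,0}_{n-1,m,K,\delta}(\rdown{s-}_K)$ and $X^d_{K,\delta}(\rdown{s-}_K)$. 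We then apply the triangle inequality in $L^2(\P;\R^d)$ and bound the three terms separately, exactly as in \eqref{f03}--\eqref{f05} in the proof of Lemma~\ref{b04}.

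For the drift term we use Jensen's (Cauchy--Schwarz) inequality in time, which gives a factor $\sqrt{T}$, followed by Lemma~\ref{a05f}\eqref{f01}. This bounds it by $2c\sqrt{T}\bigl(\int_0^t\lVert X^{d,0}_{n-1,m,K,\delta}(\rdown{s-}_K)-X^d_{K,\delta}(\rdown{s-}_K)\rVert^2_{L^2}ds\bigr)^{1/2}$. For the diffusion term we use It\^o's isometry and then Lemma~\ref{a05f}\eqref{f01} again, which gives the same bound with constant $2c$. For the jump term we use the It\^o isometry for compensated Poisson integrals, enlarge the domain of the $\nu^d(dz)$-integral from $A^d_\delta$ to $\R^d$, and apply Lemma~\ref{a05f}\eqref{f02}. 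This gives the constant $\sqrt{2c}$. Summing the three bounds gives the claim with $\kappa=2c\sqrt{T}+2c+\sqrt{2c}$, uniformly in $d,n,m,K,\delta$.

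The only delicate point is whether the isometries and Lemma~\ref{a05f} apply. The integrands must be predictable and square integrable. Predictability holds because they depend only on $X^{d,0}_{n-1,m,K,\delta}(\rdown{s-}_K)$ and $X^d_{K,\delta}(\rdown{s-}_K)$, which are adapted and left-continuous in $s$ (the processes are piecewise constant on the grid). Square integrability follows from Lemma~\ref{s04}\eqref{s04a} and Lemma~\ref{b01}. Lemma~\ref{a05f} itself needs the expectation $\E[\,\cdot\,]|_{x=\cdot}$ to be taken over the law of the \emph{same} process as the outer argument. This holds here because Lemma~\ref{k02} expresses everything through the law of $X^{d,0}_{n-1,m,K,\delta}$. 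In particular, by the independence and identical distribution of the copies indexed by $\theta$, the inner samples $X^{d,(0,n,k,\ell)}_{\ell,m,K,\delta}$ have the same law as $X^{d,0}_{\ell,m,K,\delta}$. Once these points are checked, the estimate is routine.
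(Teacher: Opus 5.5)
Your proposal is correct and follows the paper's proof essentially verbatim. You subtract \eqref{b02b} from the representation in Lemma~\ref{k02}, then bound the drift, Brownian and compensated-jump differences via Jensen's inequality, It\^o's isometry and Lemma~\ref{a05f}\eqref{f01}--\eqref{f02}, exactly as in \eqref{k05}--\eqref{k08}; your explicit constant $\kappa=2c\sqrt{T}+2c+\sqrt{2c}$ and your predictability and square-integrability checks match what the paper uses implicitly.
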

\begin{proof}[Proof of Lemma~\ref{s11}]First, Jensen's inequality and Lemma~\ref{a05f} show that in both cases there exists $\kappa\in(0,\infty)$ such that for all
$d,n,m,K\in \N$,  $t\in [0,T]$, $\delta\in\Delta$ we have that
\begin{align}
&
\biggl\lVert
\int_{0}^{t}\Bigl(
\E\! \left[\mu_d (x, X^{d,0}_{n-1,m,K,\delta}(\rdown{s-}_K) )\right]\!|_{x=X^{d,0}_{n-1,m,K,\delta}(\rdown{s-}_K)}\nonumber\\
&\qquad\qquad
-
\E\! \left[\mu_d (x, X^d_{K,\delta}(\rdown{s-}_K) )\right]\!|_{x=X^d_{K,\delta}(\rdown{s-}_K)}\Bigr)ds\biggr\rVert_{L^2(\P;\R^d)}\nonumber\\
&\leq \kappa
\biggl(
\int_{0}^{t}\Bigl\lVert
\E\! \left[\mu_d (x, X^{d,0}_{n-1,m,K,\delta}(\rdown{s-}_K) )\right]\!|_{x=X^{d,0}_{n-1,m,K,\delta}(\rdown{s-}_K)}\nonumber\\
&\qquad\qquad\qquad
-
\E\! \left[\mu_d (x, X^d_{K,\delta}(\rdown{s-}_K) )\right]\!|_{x=X^d_{K,\delta}(\rdown{s-}_K)}\Bigr\rVert_{L^2(\P;\R^d)}^2ds
\biggr)^\frac{1}{2}\nonumber\\
&\leq\kappa \left(\int_{0}^{t}\left\lVert X^{d,0}_{n-1,m,K,\delta}(\rdown{s-}_K)-X_{K,\delta}^d(\rdown{s-}_K)\right\rVert_{L^2(\P;\R^d)}^2ds\right)^\frac{1}{2}
.\label{k05}
\end{align}
Next, 
It\^o's isometry and Lemma~\ref{a05f} imply that there exists $\kappa\in(0,\infty)$ such that in both cases for all
$d,n,m,K\in \N$, $\delta\in \Delta$, $t\in [0,T]$ we have that
\begin{align}
&
\biggl\lVert
\int_{0}^{t}\Bigl(
\E\! \left[\sigma_d (x, X^{d,0}_{n-1,m,K,\delta}(\rdown{s-}_K) )\right]\!|_{x=X^{d,0}_{n-1,m,K,\delta}(\rdown{s-}_K)}\nonumber\\
&\qquad\qquad
-
\E\! \left[\sigma_d (x, X^d_{K,\delta}(\rdown{s-}_K) )\right]\!|_{x=X^d_{K,\delta}(\rdown{s-}_K)}\Bigr)dW^{d,0}(s)\biggr\rVert_{L^2(\P;\R^d)}\nonumber\\
&=
\biggl(
\int_{0}^{t}\Bigl\lVert
\E\! \left[\sigma_d (x, X^{d,0}_{n-1,m,K,\delta}(\rdown{s-}_K) )\right]\!|_{x=X^{d,0}_{n-1,m,K,\delta}(\rdown{s-}_K)}\nonumber\\
&\qquad\qquad\qquad
-
\E\! \left[\sigma_d (x, X^d_{K,\delta}(\rdown{s-}_K) )\right]\!|_{x=X^d_{K,\delta}(\rdown{s-}_K)}\Bigr\rVert_{L^2(\P;\R^{d\times d})}^2
ds
\biggr)^\frac{1}{2}\nonumber\\
&\leq\kappa \left(\int_{0}^{t}\left\lVert X^{d,0}_{n-1,m,K,\delta}(\rdown{s-}_K)-X_{K,\delta}^d(\rdown{s-}_K)\right\rVert_{L^2(\P;\R^d)}^2ds\right)^\frac{1}{2}
\label{k07}
\end{align}
and
\begin{align}
&
\biggl\lVert
\int_{0}^{t}\int_{A^d_\delta}\Bigl(
\E\! \left[\eta_d (x, X^{d,0}_{n-1,m,K,\delta}(\rdown{s-}_K),z )\right]\!|_{x=X^{d,0}_{n-1,m,K,\delta}(\rdown{s-}_K)}\nonumber\\
&\qquad\qquad
-
\E\! \left[\eta_d (x, X^d_{K,\delta}(\rdown{s-}_K),z )\right]\!|_{x=X^d_{K,\delta}(\rdown{s-}_K)}\Bigr)\tilde{N}^{d,0}(dz,ds)\biggr\rVert_{L^2(\P;\R^d)}\nonumber\\
&=
\biggl(
\int_{0}^{t}\int_{A^d_\delta}\Bigl\lVert
\E\! \left[\eta_d (x, X^{d,0}_{n-1,m,K,\delta}(\rdown{s-}_K),z )\right]\!|_{x=X^{d,0}_{n-1,m,K,\delta}(\rdown{s-}_K)}\nonumber\\
&\qquad\qquad\qquad
-
\E\! \left[\eta_d (x, X^d_{K,\delta}(\rdown{s-}_K),z )\right]\!|_{x=X^d_{K,\delta}(\rdown{s-}_K)}\Bigr\rVert_{L^2(\P;\R^d)}^2\nu^d(dz)\,
ds
\biggr)^\frac{1}{2}\nonumber\\
&\leq\kappa \left(\int_{0}^{t}\left\lVert X^{d,0}_{n-1,m,K,\delta}(\rdown{s-}_K)-X_{K,\delta}^d(\rdown{s-}_K)\right\rVert_{L^2(\P;\R^d)}^2ds\right)^\frac{1}{2}
.\label{k08}
\end{align}
Lemmas~\ref{k02} and \ref{a14}, the triangle inequality, and \eqref{k05}--\eqref{k08} complete the proof of 
Lemma~\ref{s11}.
\end{proof}

Next, we combine the statistical error and the bias to have the MLP errors.
\begin{corollary}[MLP error]\label{k90}
For every  $d,K\in \N$, $\delta\in [0,1]$ let $X^d_{K,\delta}$ be the process in Lemma~\ref{a14}. Then  there exists $\kappa\in (0,\infty)$ such that in both cases for all 
$d,n,m,K\in \N$, $\delta\in \Delta$, $t\in [0,T]$ we have that
$
\left\lVert 
X^{d,0}_{n,m,K,\delta}(t)-X_{K,\delta}^{d}(t)
\right\rVert_{L^2(\P;\R^d)}\leq \kappa^n d^\kappa e^\frac{m}{2}m^{-\frac{n}{2}}.
$
\end{corollary}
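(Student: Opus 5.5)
We combine the statistical error (Corollary~\ref{k09}) with the bias (Lemma~\ref{s11}) and then close a Gronwall-type recursion in $n$, as in the standard MLP error analysis (cf.\ \cite{Neufeld2025,HKN2022}).

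\textbf{Error recursion.} Fix $d,m,K\in\N$ and $\delta\in\Delta$, and write
\[
\varepsilon_n(t):=\sup_{u\in[0,t]}\left\lVert X^{d,0}_{n,m,K,\delta}(u)-X^d_{K,\delta}(u)\right\rVert_{L^2(\P;\R^d)}.
\]
Since $X^{d,0}_{0,m,K,\delta}=0$, Lemma~\ref{b01} gives $\varepsilon_0(t)\le \kappa d^\kappa$. For $n\in\N$ the triangle inequality splits the error into
\[
\left\lVert X^{d,0}_{n,m,K,\delta}(t)-\E_{\mathcal{G}^d_{n-1,m,K,\delta}}\!\left[X^{d,0}_{n,m,K,\delta}(t)\right]\right\rVert_{L^2(\P;\R^d)}
\quad\text{and}\quad
\left\lVert \E_{\mathcal{G}^d_{n-1,m,K,\delta}}\!\left[X^{d,0}_{n,m,K,\delta}(t)\right]-X^d_{K,\delta}(t)\right\rVert_{L^2(\P;\R^d)} .
\]
We bound the first term by Corollary~\ref{k09} and the second by Lemma~\ref{s11}. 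Since $\rdown{s-}_K\le s$, we have $\lVert X^{d,0}_{\ell,m,K,\delta}(\rdown{s-}_K)-X^d_{K,\delta}(\rdown{s-}_K)\rVert_{L^2(\P;\R^d)}\le\varepsilon_\ell(s)$. The bias term has exactly the form of the $\ell=n-1$ summand, so it can be absorbed there. This yields a constant $\kappa$, independent of $d,n,m,K,\delta$, such that
\[
\varepsilon_n(t)\le \frac{\kappa d^\kappa}{\sqrt{m^n}}+\sum_{\ell=0}^{n-1}\frac{\kappa}{\sqrt{m^{n-\ell-1}}}\left(\int_0^t \varepsilon_\ell(s)^2\,ds\right)^{1/2},
\]
where the factor $(2-\1_{\{n-1\}}(\ell))$ and the bias constant are absorbed into $\kappa$.

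\textbf{Closing the recursion.} This is the standard full-history recursive inequality from the MLP literature. One option is to cite the corresponding Gronwall-type lemma used in \cite{Neufeld2025} (e.g.\ \cite[Lemma~3.11]{HKN2022}, or the analogous result in \cite{Neufeld2025}), which yields
\[
\varepsilon_n(T)\le \kappa d^\kappa (C\kappa)^n e^{m/2} m^{-n/2}.
\]
Alternatively, the bound can be proved directly by induction on $n$ with the ansatz
\[
\varepsilon_n(t)\le a\, b^n\, m^{-n/2}\sum_{j=0}^{n}\frac{(m t)^{j/2}}{\sqrt{j!}}\,\gamma_j ,
\]
or, more simply, with $\varepsilon_n(t)\le a b^n m^{-n/2} e^{mt/2}$ after first squaring and applying Cauchy--Schwarz to the sum over $\ell$. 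The key computation is
\[
\int_0^t e^{ms}\,ds\le \frac{e^{mt}}{m},
\]
so that
\[
\frac{1}{\sqrt{m^{n-\ell-1}}}\left(\int_0^t a^2 b^{2\ell} m^{-\ell} e^{ms}\,ds\right)^{1/2}\le a\, b^\ell\, m^{-n/2} e^{mt/2}.
\]
Summing over $\ell$ gives $a\,m^{-n/2}e^{mt/2}\sum_{\ell<n} b^\ell\le a b^n m^{-n/2}e^{mt/2}$ once $b\ge 2\kappa+1$. Adding the $\kappa d^\kappa m^{-n/2}$ term is absorbed by choosing $a=\kappa d^\kappa$ and $b$ large. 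Taking $t\le T$ and absorbing $e^{mT/2}$ into $e^{m/2}$ costs a factor $\kappa^n$; more precisely $e^{mT/2}=(e^{m/2})^T$, which we handle by rescaling, or by replacing $e^{mt/2}$ with $e^{mt/(2T)}$ in the ansatz, which only changes constants.

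\textbf{Main obstacle.} The hard step is closing the recursion uniformly. The constants must not depend on $m,K,\delta$, and the base case $\varepsilon_0\le\kappa d^\kappa$ must be uniform. Lemma~\ref{b01} supplies this uniformly in $K$ and $\delta$. The exponential-ansatz induction is the cleanest route: the $1/m$ gained from integrating $e^{ms}$ is exactly what converts $m^{-(n-\ell-1)/2}m^{-\ell/2}$ into $m^{-n/2}$. The resulting constant $(C\kappa)^n$ is then renamed $\kappa^n$, which finishes the proof of Corollary~\ref{k90}.
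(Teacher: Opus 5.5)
Your proposal is correct and follows the paper's proof: the paper combines Corollary~\ref{k09} and Lemma~\ref{s11} by the triangle inequality, passes to suprema in time, and closes the full-history recursion with the Gronwall-type lemma \cite[Lemma~3.11]{HJKN2020} (not \cite{HKN2022}), using $X^{d,0}_{0,m,K,\delta}=0$ and Lemma~\ref{b01}. Your self-contained induction also works, with one correction: absorbing $e^{mT/2}$ into $e^{m/2}$ does not cost only a factor $\kappa^n$ when $T>1$, since the extra factor $e^{m(T-1)/2}$ is unbounded in $m$; the rescaled ansatz $e^{mt/(2T)}$ that you also mention fixes this, and it needs $b\ge \kappa\sqrt{T}+1$.
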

\begin{proof}[Proof of Corollary~\ref{k90}]
Combining Corollary~\ref{k09}, Lemma~\ref{s11}, and the triangle inequality implies that 
there exists $\kappa\in (0,\infty)$ such that in both cases for all 
$d,n,m,K\in \N$, $\delta\in \Delta$, $t\in [0,T]$ we have that
\begin{align}
&
\left\lVert 
X^{d,0}_{n,m,K,\delta}(t)-
X_{K,\delta}^d(t)
\right\rVert_{L^2(\P;\R^d)}\nonumber\\
&\leq 
\left\lVert 
X^{d,0}_{n,m,K,\delta}(t)-
\E_{\mathcal{G}^{d}_{n-1,m,K,\delta} }\!\left[X^{d,0}_{n,m,K,\delta}(t)\right]
\right\rVert_{L^2(\P;\R^d)}
+
\left\lVert 
\E_{\mathcal{G}^{d}_{n-1,m,K,\delta} }\!\left[X^{d,0}_{n,m,K,\delta}(t)\right]
-
X_{K,\delta}^d(t)
\right\rVert_{L^2(\P;\R^d)}
\nonumber\\
&\leq \frac{\kappa d^\kappa }{\sqrt{m^n}}
+\sum_{\ell=0}^{n-1}\frac{\kappa}{\sqrt{m^{n-\ell-1}}}
\left(\int_{0}^{t}\left\lVert X_{\ell,m,K,\delta}^{d,0} (\rdown{s-}_K ) 
-X_{K,\delta}^d(\rdown{s-}_K )
\right\rVert_{L^2(\P;\R^d)}^2 ds\right)^\frac{1}{2}.
\label{k17}\end{align}
Hence,  for all 
$d,n,m,K\in \N$, $\delta\in \Delta$, $t\in [0,T]$ we have that
\begin{align}
&\sup_{u\in [0,t]}
\left\lVert 
X^{d,0}_{n,m,K,\delta}(u)-
X_{K,\delta}^d(u)
\right\rVert_{L^2(\P;\R^d)}\nonumber\\
&\leq \frac{\kappa d^\kappa }{\sqrt{m^n}}
+\sum_{\ell=0}^{n-1}\frac{\kappa}{\sqrt{m^{n-\ell-1}}}
\left(\int_{0}^{t}\sup_{u\in[0,s]}\left\lVert X_{\ell,m,K,\delta}^{d,0} (u) 
-X_{K,\delta}^d(u)
\right\rVert_{L^2(\P;\R^d)}^2ds \right)^\frac{1}{2}.
\end{align}
This, \cite[Lemma~3.11]{HJKN2020}, and \eqref{s05}
 show for all $n,m,d,K\in \N$,
$t\in [0,T]$, $\delta\in \Delta$ that
\begin{align}
&
\sup_{u\in[0,t]}
\left\lVert 
X^{d,0}_{n,m,K,\delta}(u)-X_{K,\delta}^{d}(u)
\right\rVert_{L^2(\P;\R^d)} \nonumber \\
&
\leq 
\left(\kappa d^\kappa + \kappa T^{1/2} \sup_{t\in [0,T]} \left\lVert X^{d}_{K,\delta}(t)\right\rVert_{L^2(\P;\R^d)}  \right)
e^{\frac{m}{2}}m^{-\frac{n}{2}}
\left(1+\kappa T^{1/2}\right)^{n-1}.
\end{align}
This and Lemma~\ref{b01} complete the proof of Corollary~\ref{k90}.
\end{proof}

%\subsection{The truncation error}We will establish the truncation error of the approximations.

\subsection{Completion of the proofs of Theorems~\ref{p01} and \ref{p02}}

\begin{proof}[Proof of Theorem~\ref{p01}]
Combining Corollary~\ref{k90} (with $\delta\gets 0$), Lemma~\ref{f07}, and the triangle inequality show that
there exists $\kappa\in (0,\infty)$ such that  for all 
$d, n,m,K\in \N$,  $t\in [0,T]$ we have that
 
\begin{align}
\left\lVert 
X^{d,0}_{n,m,K,0}(t)-X^{d}_\infty(t)
\right\rVert_{L^2(\P;\R^d)}&\leq
\left\lVert 
X^{d,0}_{n,m,K,0}(t)-X^{d}_{K,0}(t)
\right\rVert_{L^2(\P;\R^d)}+
\left\lVert 
X^{d}_{K,0}(t)-
X^{d}_{\infty}(t)
\right\rVert_{L^2(\P;\R^d)}\nonumber\\
&\leq 
\kappa^n d^\kappa e^\frac{m}{2}m^{-\frac{n}{2}}+\frac{\kappa d^\kappa}{\sqrt{K}}.\label{k19}
\end{align}
This
completes the proof of Theorem~\ref{p01}.
\end{proof}

\begin{proof}[Proof of Theorem~\ref{p02}]
For every $d\in \N$, 
$G\in C(\R^d,\R^d)$,
 $K\in \N$, $\theta\in \Theta$, $k\in [1,K]\cap\Z$ we have a.s.\ that
\begin{align}
\int_{\frac{(k-1)T}{K}}^\frac{kT}{K}\int_{\R^d}G(z)N^{d,\theta}(dz,ds)= \sum_{j=1}^{P_k}G(\rvV_j)
\end{align}
where $P_k$ is Poisson distributed with intensity
 $\nu^d(\R^d)T/K$
and $\rvV_j$, $j\in \N$, are independent random variables with distribution $\nu^d(\cdot)/\nu^d(\R^d)$ and independent of $P_k$. 
This means each part $N^{d,\theta}(dz,(\frac{(k-1)T}{K},\frac{kT}{K} ])$ of the random measure can be represented by saving $P_k+1$ random variables. This number can be unbounded. However, its expectation equals  $\nu^d(\R^d)T/K+1$.
The cost $\mathsf{C}_{n,m,K}^d$ contains the following costs:
For each $\ell \in [0,n-1]\cap\Z$, $k\in [1,m^{n-\ell}]\cap\Z$:
\begin{itemize}
\item We need to prepare two processes
$X^{d,\theta}_{\ell,m,K,0}$ and $X^{d,\theta}_{\ell-1,m,K,0}$, which brings the cost 
$\mathsf{C}^d_{\ell,m,K}+\1_\N(\ell)\mathsf{C}^d_{\ell-1,m,K}$.
\item We need to prepare two processes
$X^{d,(\theta,n,k,\ell)}_{\ell,m,K,0}$ and $X^{d,(\theta,n,k,\ell)}_{\ell-1,m,K,0}$, 
which come from a new source of Brownian and Poisson randomness,
which brings the cost 
$\mathsf{C}^d_{\ell,m,K}+\1_\N(\ell)\mathsf{C}^d_{\ell-1,m,K}+ K\mathsf{cost}_{\mathsf{rv,d}}+
K(\nu^d(\R^d)T/K+1)$. Here,
$K$ is the length of the $d$-dimensional Brownian motion and
$K\mathsf{cost}_{\mathsf{rv},d}$ is the cost to generate it. 
Furthermore, each random measure $N^\theta$ is discretized into $K$ parts and each part is represented by $\nu^d(\R^d)T/K+1$ random variables (in expectation) as we have seen at the beginning of this proof.

\item We need to prepare a scalar random  variable, $\mathfrak{u}^{(\theta,n,k,\ell)}$, which brings the cost $\mathsf{cost}_{\mathsf{rv},1} $.
\item We need to evaluate $\mu_d$ at two points which brings the cost $2\mathsf{cost}_{\mu_d}$.
\item We need to evaluate $\sigma_d$ at 
each point of the two processes
$((X_{\ell,m,K,0}^{d,\theta} (\frac{kT}{K} ),X_{\ell,m,K,0}^{d,(\theta,n,k,\ell)} (\frac{kT}{K} )))_{k\in [1,K]}$ and 
$((X_{\ell-1,m,K,0}^{d,\theta} (\frac{kT}{K} ),X_{\ell-1,m,K,0}^{d,(\theta,n,k,\ell)} (\frac{kT}{K} )))_{k\in [1,K]}$ which brings the cost $2K\mathsf{cost}_{\sigma_d}$.
\item We need to evaluate $\eta_d$ at $2K\nu^d(\R^d)T/K=2\nu^d(\R^d)T$ points (in expectation) that brings the cost $2\nu^d(\R^d)T\mathsf{cost}_{\eta_d}$.
\item We need to evaluate $\eta_d$ at $2\lceil \nu^d(\R^d)\rceil K$ points which brings the cost $2\lceil \nu^d(\R^d)\rceil K\mathsf{cost}_{\eta_d} $.
\end{itemize}
This, \eqref{s12}, \eqref{s13}, and the fact that
$\forall\,d,m,K\in \N\colon\mathsf{C}^d_{0,m,K}=0$
 show that there exists $\kappa\in (0,\infty)$ such that for all $d,n,m,K\in \N$ we have that
\begin{align}
\mathsf{C}^d_{n,m,K}&\leq \sum_{\ell=0}^{n-1}
m^{n-\ell} \left(2\mathsf{C}^d_{\ell,m,K}+2\1_\N(\ell)\mathsf{C}^d_{\ell-1,m,K}+\kappa d^\kappa K\right)\nonumber\\
&\leq \kappa d^\kappa Kn m^n+\sum_{\ell=1}^{n-1}
m^{n-\ell} \left(2\mathsf{C}^d_{\ell,m,K}+2\mathsf{C}^d_{\ell-1,m,K}\right).
\end{align}
Hence, there exists $\kappa\in (0,\infty)$ such that for all $d,n,m,K\in \N$ we have that
\begin{align}
m^{-n}\mathsf{C}^d_{n,m,K}\leq \kappa d^\kappa Kn+\sum_{\ell=1}^{n-1}2m^{-\ell}\mathsf{C}^d_{\ell,m,K}+ 
2m^{-(\ell-1)}\mathsf{C}^d_{\ell-1,m,K}.
\end{align}
This and 
\cite[Corollary~2.3]{HKN2022} (applied with
$(a_n)_{n\in \N_0}\gets (m^{-n}\mathsf{C}^d_{n,m,K})_{n\in \N_0}$,
$\kappa\gets2$, $\lambda\gets 2$,
$c_1\gets 0$, $c_2\gets \kappa d^\kappa K$, $c_3\gets 0$, $c_4\gets0$,
$\beta\gets \frac{1+2+\sqrt{(1+2)^2+4\cdot 2}}{2}$ in the notation of 
\cite[Corollary~2.3]{HKN2022}) show 
that there exists $\kappa\in (0,\infty)$ such that for all $d,n,m,K\in \N$ we have that
$
m^{-n}\mathsf{C}^d_{n,m,K}\leq 
\frac{3}{2}\kappa d^\kappa K\frac{4^n-1}{4-1}
$. Hence, there exists $\kappa\in (0,\infty)$ such that for all $d,n,m,K\in \N$ we have that
$\mathsf{C}^d_{n,m,K}\leq \kappa d^\kappa K4^n m^n$.
This implies that there exists $\kappa\in (0,\infty)$ such that for all $d,n,m,K\in \N$ we have that
\begin{align}
\mathsf{C}^d_{n,n,n^n}\leq \kappa d^\kappa4^n n^{2n}.\label{k10b}
\end{align}
Next,
Theorem~\ref{p01} demonstrates that there exists $\kappa\in (0,\infty)$ such that for all $d,n\in \N$, $t\in[0,T]$ we have that
\begin{align}
\left\lVert 
X^{d,0}_{n,n,n^n,0}(t)-X^{d}_\infty(t)
\right\rVert_{L^2(\P;\R^d)}\leq \kappa^n d^\kappa e^\frac{n}{2}n^{-\frac{n}{2}}+\frac{\kappa d^\kappa}{n^{n/2}}.
\end{align}
Choosing a large $\kappa$ then implies that there exists $\kappa\in (1,\infty)$ such that for all $d,n\in \N$, $t\in[0,T]$ we have that
\begin{align}\label{k11}
\left\lVert 
X^{d,0}_{n,n,n^n,0}(t)-X^{d}_\infty(t)
\right\rVert_{L^2(\P;\R^d)}\leq\kappa^n d^\kappa n^{-n/2}
\end{align}
From now on we fix $\kappa\in (1,\infty)$ such that both \eqref{k10b} and \eqref{k11} hold. 
Furthermore, for every $\gamma\in (0,1)$ let 
\begin{align}\label{k12}
\mathfrak{C}_\gamma=\sup_{n\in \N}\left(\kappa^{6n}4^{4n}n^2 n^{-{\gamma n/2}}\right)
\end{align}
Since $(n^{\gamma n/2})_{n\in \N} $ increases faster than any exponential function, one sees that $\mathfrak{C}_\gamma<\infty$ for all $\gamma\in (0,1)$.
Next, \eqref{k10b} implies for all $d,n\in \N$ that
\begin{align}
\mathsf{C}^d_{(n+1),(n+1),(n+1)^{n+1}}\leq \kappa  d^\kappa 4^{n+1} (n+1)^{2(n+1)}\leq \kappa  d^\kappa4^{n+1} (2n)^{2(n+1)}\leq \kappa  d^\kappa 4^{4n} n^2 n^{2n}.
\end{align}
This, \eqref{k11}, and \eqref{k12} show for all $d,n\in \N$ that
\begin{align}
&
\mathsf{C}^d_{(n+1),(n+1),(n+1)^{n+1}}\left(
\sup_{t\in [0,T]}
\left\lVert 
X^{d,0}_{n,n,n^n,0}(t)-X^{d}_\infty(t)
\right\rVert_{L^2(\P;\R^d)}\right)^{4+\gamma}\nonumber\\
&\leq 
 \kappa    d^\kappa 4^{4n} n^2 n^{2n} (\kappa^n d^\kappa n^{-n/2})^{4+\gamma}\leq \kappa^{6n}d^{6\kappa}4^{4n}n^2 n^{-{\gamma n/2}}\leq  d^{6\kappa} \mathfrak{C}_\gamma.\label{k13}
\end{align}
Now, for every $d\in \N$, $\epsilon\in (0,1)$ let 
\begin{align}
\mathsf{n}_{d,\epsilon} =\inf\!\left(\left \{
n\in \N\colon \sup_{k\in [n,\infty)\cap\Z}
\sup _{t\in [0,T]}
\left\lVert
X^{d,0}_{k,k,k^k,0}(t)-X^{d}_\infty(t)
\right\rVert_{L^2(\P;\R^d)}<\epsilon
\right\}\cup\{\infty\}\right).\label{c16}
\end{align}
Then \eqref{k10b}, \eqref{k13}, and \eqref{k12} show for all $d\in \N$, $\epsilon\in (0,1)$ that
\begin{align}
&
\mathsf{C}_{\mathsf{n}_{d,\epsilon},\mathsf{n}_{d,\epsilon}, (\mathsf{n}_{d,\epsilon})^{\mathsf{n}_{d,\epsilon}}}^d\epsilon^{4+\gamma}\nonumber\\
&\leq\1_{\{1\}}(\mathsf{n}_{d,\epsilon})\mathsf{C}^d_{1,1,1}
+\1_{[2,\infty)} (\mathsf{n}_{d,\epsilon})
\mathsf{C}_{\mathsf{n}_{d,\epsilon},\mathsf{n}_{d,\epsilon}, (\mathsf{n}_{d,\epsilon})^{\mathsf{n}_{d,\epsilon}}}^d
\left(
\sup_{t\in [0,T]}\left\lVert 
X^{d,0}_{n,n,n^n,0}(t)-X_\infty^{d,0}(t)
\right\rVert_{L^2(\P;\R^d)}\right)^{4+\gamma}|_{n=\mathsf{n}_{d,\epsilon}-1}\nonumber\\
&\leq\1_{\{1\}}(\mathsf{n}_{d,\epsilon}) 4\kappa d^\kappa+
\1_{[2,\infty)} (\mathsf{n}_{d,\epsilon})
d^{6\kappa}\mathfrak{C}_\gamma\nonumber\\
&\leq d^{6\kappa}\mathfrak{C}_\gamma.\label{c18}
\end{align}
This completes the proof of Theorem~\ref{p02}.
\end{proof}

\subsection{Completion of the proofs of Theorems~\ref{p01b} and \ref{p02b}}
\begin{proof}[Proof of Theorem~\ref{p01b}]Combining Corollary~\ref{k90}, Lemmas~\ref{f07} and~\ref{b04}, and the triangle inequality 
shows that  there exists $\kappa\in (0,\infty)$ such that  for all 
$d\in \N$, $\delta\in (0,1]$, $n,m,K\in \N$,  $t\in [0,T]$ we have that
\begin{align}
&\left\lVert 
X^{d,0}_{n,m,K,\delta}(t)-X^{d}_\infty(t)
\right\rVert_{L^2(\P;\R^d)}\nonumber\\
&\leq 
\left\lVert 
X^{d,0}_{n,m,K,\delta}(t)-
X^{d}_{K,\delta}(t)
\right\rVert_{L^2(\P;\R^d)}
+
\left\lVert 
X^{d}_{K,\delta}(t)-
X^{d}_{K,0}(t)
\right\rVert_{L^2(\P;\R^d)}
+
\left\lVert 
X^{d}_{K,0}(t)-X^d_\infty(t)
\right\rVert_{L^2(\P;\R^d)}\nonumber\\
&\leq \kappa^n d^\kappa
 e^\frac{m}{2}m^{-\frac{n}{2}}+\delta^\frac{q}{2}\kappa d^\kappa+\frac{\kappa d^\kappa}{\sqrt{K}}.\label{k18}
\end{align}
This 
completes the proof of Theorem~\ref{p01b}.
\end{proof}

\begin{proof}[Proof of Theorem~\ref{p02b}]First, \eqref{p03}, and \eqref{s13b} show for all $d\in \N$, $\delta\in (0,1]$ that
\begin{align}
\nu^d(A^d_\delta)=
\nu^d(
\{z\in \R^d\colon \lVert z\rVert\geq \delta\})\leq \frac{1}{\delta^2}\int_{\lVert z\rVert\geq \delta}(1\wedge\lVert z\rVert^2)\,\nu^d(dz)
\leq \frac{cd^c}{\delta^2}.\label{p04}
\end{align}
Next, for every $d,K\in \N$, 
$G\in C(\R^d,\R^d)$,
 $\theta\in \Theta$, $k\in [1,K]\cap\Z$ we have a.s.\ that
\begin{align}
\int_{\frac{(k-1)T}{K}}^\frac{kT}{K}\int_{A^d_\delta}G(z)N^{d,\theta}(dz,ds)= \sum_{j=1}^{P_k}G(\rvV_j)
\end{align}
where $P_k$ is Poisson distributed with intensity $\nu^d(A^d_\delta)T/K$
and $\rvV_j$, $j\in \N$, are independent random variables with distribution $\frac{\nu^d((\cdot)\cap A^d_\delta)}{\nu^d(A^d_\delta)}$ and independent of $P_k$. 
This means each part $N^{d,\theta}(dz,(\frac{(k-1)T}{K},\frac{kT}{K} ])$ of the random measure can be represented by saving $P_k+1$ random variables. This number can be unbounded. However, its expectation equals to $\nu^d(A^d_\delta)T/K+1$.
The cost $\mathsf{C}_{n,m,K,\delta}^d$ contains the following costs:
For each $\ell \in [0,n-1]\cap\Z$, $k\in [1,m^{n-\ell}]\cap\Z$:
\begin{itemize}
\item We need to prepare two processes
$X^{d,\theta}_{\ell,m,K,\delta}$ and $X^{d,\theta}_{\ell-1,m,K,\delta}$, which brings the cost 
$\mathsf{C}^d_{\ell,m,K,\delta}+\1_\N(\ell)\mathsf{C}^d_{\ell-1,m,K,\delta}$.
\item We need to prepare two processes
$X^{d,(\theta,n,k,\ell)}_{\ell,m,K,\delta}$ and $X^{d,(\theta,n,k,\ell)}_{\ell-1,m,K,\delta}$, 
which come from a new source of Brownian and Poisson randomness,
which brings the cost 
$\mathsf{C}^d_{\ell,m,K,\delta}+\1_\N(\ell)\mathsf{C}^d_{\ell-1,m,K,\delta}+ K\mathsf{cost}_{\mathsf{rv,d}}+
K(\nu^d(A^d_\delta)T/K+1)$. Here,
$K$ is the length of the $d$-dimensional Brownian motion and
$K\mathsf{cost}_{\mathsf{rv},d}$ is the cost to generate it. 
Furthermore, each random measure $N^\theta$ is discretized into $K$ parts and each part is represented by $\nu^d(A^d_\delta)T/K+1$ random variables (in expectation) as we have seen at the beginning of this proof.

\item We need to prepare a scalar random  variable, $\mathfrak{u}^{(\theta,n,k,\ell)}$, which brings the cost $\mathsf{cost}_{\mathsf{rv},1} $
\item We need to evaluate $\mu_d$ at two points which brings the cost $2\mathsf{cost}_{\mu_d}$
\item We need to evaluate $\sigma_d$ at 
each point of the two processes
$((X_{\ell,m,K,\delta}^{d,\theta} (\frac{kT}{K} ),X_{\ell,m,K,\delta}^{d,(\theta,n,k,\ell)} (\frac{kT}{K} )))_{k\in [1,K]}$ and 
$((X_{\ell-1,m,K,\delta}^{d,\theta} (\frac{kT}{K} ),X_{\ell-1,m,K,\delta}^{d,(\theta,n,k,\ell)} (\frac{kT}{K} )))_{k\in [1,K]}$ which brings the cost $2K\mathsf{cost}_{\sigma_d}$
\item We need to evaluate $\eta_d$ at $2K\nu^d(A^d_\delta)T/K=2\nu^d(A^d_\delta)T$ points (in expectation) that brings the cost $2\nu^d(A^d_\delta)T\mathsf{cost}_{\eta_d}$.
\item We need to evaluate $\eta_d$ at $2\lceil \nu^d(A^d_\delta)\rceil K$ points which brings the cost $2\lceil \nu^d(A^d_\delta)\rceil K\mathsf{cost}_{\eta_d} $.
\end{itemize}
This, \eqref{s12b}, \eqref{p04}, and the fact that
$\forall\,d,m,K\in \N\colon\mathsf{C}^d_{-1,m,K}=\mathsf{C}^d_{0,m,K}=0$
 show that there exists $\kappa\in (0,\infty)$ such that for all $d,
n,m,K\in \N$, $\delta\in (0,1]$ we have that
\begin{align}
\mathsf{C}^d_{n,m,K,\delta}&\leq \sum_{\ell=0}^{n-1}
m^{n-\ell} \left(2\mathsf{C}^d_{\ell,m,K,\delta}+2\1_\N(\ell)\mathsf{C}^d_{\ell-1,m,K,\delta}+\frac{\kappa d^\kappa K}{\delta^2}\right)\nonumber\\
&\leq \frac{\kappa d^\kappa K}{\delta^2}n m^n+\sum_{\ell=1}^{n-1}
m^{n-\ell} \left(2\mathsf{C}^d_{\ell,m,K,\delta}+2\mathsf{C}^d_{\ell-1,m,K,\delta}\right).
\end{align}
Hence, there exists $\kappa\in (0,\infty)$ such that for all $d,n,m,K\in \N$, $\delta\in (0,1]$ we have that
\begin{align}
m^{-n}\mathsf{C}^d_{n,m,K,\delta}\leq \frac{\kappa d^\kappa K}{\delta^2}n+\sum_{\ell=1}^{n-1}2m^{-\ell}\mathsf{C}^d_{\ell,m,K,\delta}+ 
2m^{-(\ell-1)}\mathsf{C}^d_{\ell-1,m,K,\delta}.
\end{align}
This and 
\cite[Corollary~2.3]{HKN2022}(applied with
$(a_n)_{n\in \N_0}\gets (m^{-n}\mathsf{C}^d_{n,m,K,\delta})_{n\in \N_0}$,
$\kappa\gets2$, $\lambda\gets 2$,
$c_1\gets 0$, $c_2\gets \frac{\kappa d^\kappa K}{\delta^2}$, $c_3\gets 0$, $c_4\gets0$,
$\beta\gets \frac{1+2+\sqrt{(1+2)^2+4\cdot 2}}{2}$ in the notation of 
\cite[Corollary~2.3]{HKN2022}) show 
that there exists $\kappa\in (0,\infty)$ such that for all $d, n,m,K\in \N$, $\delta\in (0,1]$ we have that
$
m^{-n}\mathsf{C}^d_{n,m,K,\delta}\leq 
\frac{3}{2}\frac{\kappa d^\kappa K}{\delta^2}\frac{4^n-1}{4-1}
$. Hence, there exists $\kappa\in (0,\infty)$ such that for all $d,n,m,K\in \N$, $\delta\in (0,\infty)$ we have that
$\mathsf{C}^d_{n,m,K,\delta}\leq \kappa \frac{d^\kappa K}{\delta^2}4^n m^n$.
This implies that there exists $\kappa\in (0,\infty)$ such that for all $d,n\in \N$ we have that
\begin{align}
\mathsf{C}^d_{n,n,n^n,n^{-n/q}}\leq \frac{\kappa d^\kappa}{n^{-2n/q}}4^n n^{2n}=\kappa d^\kappa 4^n n^{2n(1+\frac{1}{q})}.\label{k10}
\end{align}
Next,
Theorem~\ref{p01b} demonstrates that there exists $\kappa\in (0,\infty)$ such that for all $d, n\in \N$, $t\in[0,T]$, $\delta\in (0,1]$ we have that
\begin{align}
\left\lVert 
X^{d,0}_{n,n,n^n,\delta}(t)-X^{d}_\infty(t)
\right\rVert_{L^2(\P;\R^d)}\leq \kappa^n d^\kappa e^\frac{n}{2}n^{-\frac{n}{2}}+\delta^\frac{q}{2}\kappa d^\kappa+\frac{\kappa d^\kappa}{\sqrt{n^n}}.
\end{align}
Choosing a large $\kappa$ then implies that there exists $\kappa\in (0,\infty)$ such that for all $d,n\in \N$, $t\in[0,T]$ we have that
\begin{align}\label{k11b}
\left\lVert 
X^{d,0}_{n,n,n^n, n^{-n/q}}(t)-X^{d}_\infty(t)
\right\rVert_{L^2(\P;\R^d)}\leq\kappa^n d^\kappa n^{-n/2}.
\end{align}
From now on we fix $\kappa\in (1,\infty)$ such that both \eqref{k10} and \eqref{k11b} hold. 
Furthermore, for every $\gamma\in (0,1)$ let 
\begin{align}\label{k12b}
\mathfrak{C}_\gamma=\sup_{n\in \N}\left( \kappa^{6n(1+\frac{1}{q})}32^{4n(1+\frac{1}{q})} n^{2(1+\frac{1}{q})} n^{-{\gamma n/2}}\right).
\end{align}
Since $(n^{\gamma n/2})_{n\in \N} $ increases faster than any exponential function, we see  that $\mathfrak{C}_\gamma<\infty$ for all $\gamma\in (0,1)$.
Next, \eqref{k10} implies for all $d,n\in \N$ that
\begin{align}
\mathsf{C}^d_{(n+1),(n+1),(n+1)^{n+1}, (n+1)^{-\frac{n+1}{q}}}&\leq \kappa  d^\kappa 4^{n+1} (n+1)^{2(n+1)(1+\frac{1}{q})}\nonumber\\
&\leq \kappa  d^\kappa 16^n (2n)^{2(n+1)(1+\frac{1}{q})}\nonumber\\
&\leq \kappa  d^\kappa 32^{4n(1+\frac{1}{q})} n^{2(1+\frac{1}{q})} n^{2n(1+\frac{1}{q})}.
\end{align}
This, \eqref{k11b}, and \eqref{k12b} show for all $d,n\in \N$ that
\begin{align}
&
\mathsf{C}^d_{(n+1),(n+1),(n+1)^{n+1},(n+1)^{-\frac{n+1}{q}}}\left(
\sup_{t\in [0,T]}
\left\lVert 
X^{d,0}_{n,n,n^n, n^{-n/q}}(t)-X^{d}_\infty(t)
\right\rVert_{L^2(\P;\R^d)}\right)^{4(1+\frac{1}{q})+\gamma}\nonumber\\
&\leq 
 \kappa  d^\kappa 32^{4n(1+\frac{1}{q})} n^{2(1+\frac{1}{q})} n^{2n(1+\frac{1}{q})} (\kappa^n d^\kappa n^{-n/2})^{4(1+\frac{1}{q})+\gamma}\nonumber\\
&\leq \kappa^{6n(1+\frac{1}{q})}d^{6\kappa(1+\frac{1}{q})}32^{4n(1+\frac{1}{q})} n^{2(1+\frac{1}{q})} n^{-{\gamma n/2}}\nonumber\\
&
\leq  d^{6\kappa(1+\frac{1}{q})} \mathfrak{C}_\gamma.\label{k13b}
\end{align}
Now, for every $d\in \N$, $\epsilon\in (0,1)$ let 
\begin{align}
\mathsf{n}_{d,\epsilon} =\inf\!\left(\left \{
n\in \N\colon \sup_{k\in [n,\infty)\cap\Z}
\sup _{t\in [0,T]}
\left\lVert
X^{d,0}_{k,k,k^k, k^{-k/q}}(t)-X^{d}_\infty(t)
\right\rVert_{L^2(\P;\R^d)}<\epsilon
\right\}\cup\{\infty\}\right).\label{c16b}
\end{align} 
By \eqref{k11b} we have that $\mathsf{n}_{d,\epsilon}<\infty$.
Then \eqref{k10}, \eqref{k13b}, and \eqref{k12b} show for all $d\in \N$, $\epsilon\in (0,1)$ that
\begin{align}
&
\mathsf{C}_{\mathsf{n}_{d,\epsilon},\mathsf{n}_{d,\epsilon}, (\mathsf{n}_{d,\epsilon})^{\mathsf{n}_{d,\epsilon}}, (\mathsf{n}_{d,\epsilon})^{- \mathsf{n}_{d,\epsilon}/q }}^d
\epsilon^{4(1+\frac{1}{q})+\gamma}\nonumber\\
&\leq\1_{\{1\}}(\mathsf{n}_{d,\epsilon})\mathsf{C}^d_{1,1,1,1}
+\1_{[2,\infty)} (\mathsf{n}_{d,\epsilon})
\mathsf{C}_{\mathsf{n}_{d,\epsilon},\mathsf{n}_{d,\epsilon}, (\mathsf{n}_{d,\epsilon})^{\mathsf{n}_{d,\epsilon}}, (\mathsf{n}_{d,\epsilon})^{- \mathsf{n}_{d,\epsilon}/q }}^d
\nonumber\\
&\quad
\left(
\sup_{t\in [0,T]}\left\lVert 
X^{d,0}_{n,n,n^n, n^{-n/q}}(t)-X_\infty^{d,0}(t)
\right\rVert_{L^2(\P;\R^d)}\right)^{4(1+\frac{1}{q})+\gamma}|_{n=\mathsf{n}_{d,\epsilon}-1}\nonumber\\
&\leq\1_{\{1\}}(\mathsf{n}_{d,\epsilon}) \kappa d^\kappa+
\1_{[2,\infty)} (\mathsf{n}_{d,\epsilon})
d^{6\kappa(1+\frac{1}{q})}\mathfrak{C}_\gamma\nonumber\\
&\leq d^{6\kappa(1+\frac{1}{q})}\mathfrak{C}_\gamma.
\end{align}
This completes the proof of Theorem~\ref{p02b}.
\end{proof}

\bibliographystyle{acm}
\bibliography{References}

\end{document}